\newcommand{\df}{\mathrm{d}}
\newcommand{\X}{\mathsf{X}}
\newcommand{\Y}{\mathsf{Y}}
\newcommand{\B}{\mathcal{B}}
\newcommand{\tX}{\mathbf{X}}
\newcommand{\tx}{\mathbf{x}}
\newcommand{\ty}{\mathbf{y}}
\newcommand{\tz}{\mathbf{z}}
\newcommand{\tw}{\mathbf{w}}
\newcommand{\E}{\mathbb{E}}
\newcommand{\Prob}{\mathbb{P}}
\newcommand{\Range}{\mathcal{L}}
\newcommand{\RW}{R}
\newcommand{\RWNL}{\tilde{R}}
\newcommand{\ind}{\mathbf{1}}
\newcommand{\Id}{\mbox{Id}}
\newcommand{\pcite}[1]{\citeauthor{#1}'s \citeyearpar{#1}}
\newtheorem{theorem}{Theorem}
\newtheorem{lemma}[theorem]{Lemma}
\newtheorem{corollary}[theorem]{Corollary}
\newtheorem{proposition}[theorem]{Proposition}
\title{\bf Spectral Telescope: Convergence Rate Bounds for Random-Scan Gibbs Samplers Based on a Hierarchical Structure}
\author{Qian Qin \\ School of Statistics \\ University of Minnesota \and Guanyang Wang \\ Department of Statistics \\ Rutgers University}
\date{}
\begin{document}
	
	\maketitle
	
	\begin{abstract}
		Random-scan Gibbs samplers possess a natural hierarchical structure.
		The structure connects Gibbs samplers targeting higher dimensional distributions to those targeting lower dimensional ones.
		This leads to a quasi-telescoping property of their spectral gaps.
		Based on this property, we derive three new bounds on the spectral gaps and convergence rates of Gibbs samplers on general domains.
		The three bounds relate a chain's spectral gap to, respectively, the correlation structure of the target distribution, a class of random walk chains, and a collection of influence matrices.
		Notably, one of our results generalizes the technique of spectral independence, which has received considerable attention for its success on finite domains, to general state spaces.
		We illustrate our methods through a sampler targeting the uniform distribution on a corner of an $n$-cube.
	\end{abstract}

	\begin{center}
		\begin{minipage}{0.85\textwidth}
			{\small {\it Keywords:}
				Glauber dynamics, Influence matrix, Mixing time, Recursive algorithm, Spectral gap, Spectral independence.}
		\end{minipage}
	\end{center}

	\section{Introduction}
Gibbs samplers are among the most popular Markov chain Monte Carlo (MCMC) approaches to sample from  multivariate probability distributions. They have been applied and studied for sampling, counting, inference, and optimization in a variety of disciplines, including mathematics, statistics, physics, and computer science. This work concerns theoretical properties of  random-scan Gibbs samplers, also known as Glauber dynamics.	Our key observation is that these types of samplers possess a natural hierarchical, or recursive, structure that facilitates convergence analysis of the underlying Markov chains. 
Exploiting this structure, we derive a quasi-telescoping property for the spectral gaps of these chains, which leads several new convergence rate bounds.
	
	Our motivation mainly stems from the spectral independence technique recently developed in the theoretical computer science community, which we  now briefly review. Spectral independence was initially introduced in \cite{anari2021spectral} to establish a polynomial mixing time of the Gibbs sampler for hardcore models. 
	It has since received a tremendous
	amount of attention in computer science as it provides a powerful tool
	for proving fast, and sometimes optimal, mixing time bounds for Gibbs
	samplers for several important discrete models.
	It is particularly useful for samplers with many components, and despite being very recently developed, it is already regarded as an attractive alternative to more traditional tools for convergence analysis, such as Dobrushin's uniqueness condition.
	See \cite{feng2021rapid}, \cite{chen2021rapid}, \cite{jain2021spectral}, \cite{ chen2022spectral}, \cite{blanca2022mixing}, \cite{chen2022rapid}, and the references therein. 
	In the original framework, spectral
	independence was defined to bound the spectral gaps of samplers on the
	Boolean domain $\{0,1\}^n$, but it has since been improved and extended in various
	directions.  
%	Indeed, the bounds have been improved and the technique is
%	now applicable to samplers on more general spaces
%	Though the original framework defines spectral independence to bound the spectral gap for samplers on the Boolean domain $\{0,1\}^n$, it has then been extended in various directions to get tighter bounds under greater generalities. 
	Some notable extensions include entropy factorization \citep{chen2021rapid, blanca2022mixing}, entropic independence \citep{anari2021entropic,anari2022entropic}, localization schemes \citep{chen2022localization}, and spectral independence on general finite domains \citep{feng2021rapid}.

	On continuous domains, convergence analysis of Gibbs samplers with many components remains challenging, despite impressive analyses for some interesting models \citep{roberts1997updating,smith2014gibbs, pillai2017kac, pillai2018mixing, janvresse2001spectral, carlen2003determination}.
	Practically speaking, the only existing
	tools that are designed specifically for convergence analysis of Gibbs
	samplers on general state spaces are based on the classical Dobrushin’s
	uniqueness condition \citep[see][and references therein]{wang2014convergence}.
	A framework that can be applied to chains outside finite domains thus seems ever so appealing.

	The main contribution of this paper is Theorem~\ref{thm:gap}, which describes the aforementioned quasi-telescoping property concerning the spectral gaps of Gibbs samplers on general state spaces.
	We refer to this property as ``the spectral telescope."
	Derived from a hierarchical structure of Gibbs samplers, the spectral telescope puts forward a flexible framework for bounding the spectral gap for these samplers on both discrete and continuous state spaces. 
	Based on it, we construct three types of bounds, given in Corollaries~\ref{cor:corr} to~\ref{cor:specind}.
	These three corollaries connect the spectral gap to, respectively, the correlation (dependence) structure of the target distribution, a collection of low-dimensional random walk chains, and a set of ``influence matrices" which define a spectral-independence-type condition.
	Corollaries~\ref{cor:randomwalk} and~\ref{cor:specind} extend/generalize results in \cite{alev2020improved} and \cite{feng2021rapid}, while Corollary~\ref{cor:corr} appears to be new even for finite state spaces.
	In particular, Corollary~\ref{cor:specind} generalizes \pcite{feng2021rapid} spectral-independence-based bound in two ways.
	Firstly, \pcite{feng2021rapid} result is extended from finite state spaces to general ones.
	Moreover, whereas \cite{feng2021rapid} calculate influence matrices based on total variation distances between conditional distributions, Corollary~\ref{cor:specind} uses influence matrices based on a more general class of Wasserstein divergences.
	Compared to methods based on the total variation distance, Wasserstein-based methods are often more effective for convergence analysis of Markov chains in high-dimensional settings \citep[see, e.g.,][]{hairer2011asymptotic,qin2020wasserstein}.

%	Our result is more general than the existing framework in the following two aspects. Firstly, Theorem \ref{thm:gap} is valid for general measurable space. Moreover, it directly implies a generalized (in terms of both metric and state space) version of the  spectral independence bound, as shown in Corollary \ref{cor:specind}. Therefore, our framework has potentially broader applicability than the existing machinery, which will be explained in greater details in the next paragraph. Additionally, the proof technique of Theorem \ref{thm:gap} is essentially elementary by exploiting the hierarchical structure, and therefore arguably simpler than the original spectral independence's proof, which uses advanced theories on high-dimensional expanders. 
	
	Theorem~\ref{thm:gap} and its corollaries arm us with techniques
	to bound the spectral gap beyond those relying on spectral independence.
	These techniques can further be combined with various tools, such as orthogonal polynomials \citep[see, e.g.,][]{diaconis2008gibbs} and one-shot coupling \citep[see, e.g.,][]{roberts2002one}, to attain broader applicability.
	This is illustrated by a concrete example  in Section~\ref{sec:example}.
	In this example, we study a random-scan Gibbs sampler targeting the uniform distribution on the corner of an $n$-cube.
	We first invoke Corollary~\ref{cor:corr} and establish a tight spectral gap bound by analyzing the correlation structure of the target distribution using orthogonal polynomials. 
	In contrast, a straightforward generalization of spectral  independence where the influence matrices are calculated from total variation distances would give only trivial bounds. 
	A second non-trivial, but sub-optimal bound is obtained using Corollary~\ref{cor:specind}, where we utilize spectral independence based on suitable Wasserstein divergences.
	The example also shows that constructing a tight bound via our method requires adequate information on the target distribution.
	Our method is not a panacea, but rather one of the many steps towards understanding the convergence properties of Gibbs samplers. 
	Applying it to Gibbs samplers in various fields is a direction for future research.
% Move to discussion:
%	In future works, we hope to use our methods to analyze Gibbs samplers used in statistics and physics, similar to those studied in \cite{diaconis2000bounds}, \cite{janvresse2001spectral}, \cite{carlen2003determination}, and \cite{pillai2018mixing}.

%	The flexibility of Theorem \ref{thm:gap} enables us to use different techniques to further bound the terms in the theorem statement. To this end, we prove three corollaries (Corollary \ref{cor:corr} to \ref{cor:specind}) to show the connection between our main theorem with correlation coefficients, random walks, and spectral independence, respectively.  Bounding these quantities are further related to many existing techniques, such as orthogonal polynomials and one-shot couplings, depending on the problem context.  These connections arm us with several powerful techniques to bound the spectral gap beyond the existing ones used by spectral independence.
%	This is illustrated by a concrete example  in Section \ref{sec:example}, where we use different ways to bound the spectral gap. In this example, we  prove the first tight spectral gap  by bounding the correlation coefficients (Corollary \ref{cor:corr}) using orthogonal polynomials. In contrast, the straightforward generalization of spectral  independence  gives only trivial bounds. Our generalized spectral independence (Corollary \ref{cor:specind}) gives non-trivial but suboptimal bounds by defining a suitable metric on the state space and then applying the one-shot coupling. 

	Properties similar to the spectral telescope have been derived for some models prior to our research.
	In particular, the spectral telescope is reminiscent of an inductive property of spectral gaps for Kac models, which are commonly used to study the distribution of physical particles \citep[][Theorem 2.2]{carlen2003determination}.
	The general mathematical setting in \cite{carlen2003determination} is quite different from ours, but some of the models they studied can be thought as Gibbs samplers whose target distributions satisfy certain symmetric properties.
	
	The rest of this paper is organized as follows. In the remainder of this section, we briefly  explain the hierarchical structure of Gibbs samplers, without getting into technical details. After introducing some preliminaries in Section~\ref{sec:prelim}, we formally define the Gibbs algorithm, describe its hierarchical structure, and state our main results in Section~\ref{sec:main}. Section~\ref{sec:example} contains the aforementioned example.
	The detailed proofs of our main results are provided in Section~\ref{sec:derivation}.

\begin{comment}
Let~$\Pi$ be an $n$-variate probability distribution.
In this work, we study the structure random-scan Gibbs samplers targeting~$\Pi$.
As we will see, this type of samplers possess a hierarchical structure that facilitates convergence analysis of the underlying Markov chains.
Exploiting this structure, we provide several new bounds on the convergence rate and spectral gap of random-scan Gibbs samplers.

Recently, in the computer science literature, convergence bounds for Gibbs samplers through spectral independence have received a tremendous amount of attention.
These bounds have proven to be extremely effective for several important models.
The techniques were developed solely for finite state spaces.
An important product of our study is the generalization of spectral independence based bounds to general state spaces.
\end{comment}

\subsection{The hierarchical structure: High level ideas} \label{ssec:highlevel}

Now we briefly explain the hierarchical (or recursive) structure of the random-scan Gibbs sampler, and defer the formal descriptions to Section \ref{sec:main}. 
%For simplicity,  assume that only one component is updated at a time, though more general cases are considered in our formal analysis. 
%Given an $n$-variate probability distribution $\Pi$ and the current state $\tx = (x_1,\dots,x_n)$, at each step, the  Gibbs sampler randomly  updates  one component according to its conditional distribution. 
Let $X_1,\dots,X_n$ be random elements whose joint distribution is~$\Pi$.
For $i \in [n] := \{1,\dots,n\}$ and~$x$ in the range of $X_i$, let $\Pi_{- \{i\} \mid \{i\}}(\cdot \mid x)$ denote the conditional distribution of 
\[
(X_1,\dots,X_{i-1},X_{i+1}, \dots,X_n) 
\]
given $X_i = x$.
Consider a Gibbs algorithm targeting~$\Pi$ that updates one component at a time.
Given the current state $(x_1,\dots,x_n)$, in each iteration, the sampler randomly and uniformly selects one component to update using its full conditional distribution.
Of course, selecting one component to update is equivalent to selecting $n-1$ components to fix.
This is, in turn, equivalent to selecting one component, say $x_i$, to fix, and then calling one step of the Gibbs sampler targeting $\Pi_{- \{i\} \mid \{i\}}(\cdot \mid x_i)$, which randomly selects $n-2$ of the remaining components to fix, and updates the component that was not selected.
We can then rewrite the Gibbs sampler as a recursive algorithm, as we can replace~$\Pi$ with $\Pi_{- \{i\} \mid \{i\}}(\cdot \mid x_i)$, and repeat the argument until the target distribution is univariate. 
	
	For illustration, suppose that $n = 4$, and the current state is $(x_1,\dots,x_4)$.
	One step of the Gibbs sampler targeting~$\Pi$ proceeds as follows:
	\begin{enumerate}
		\item Randomly and uniformly select an index~$j$ from $[4] = \{1,2,3,4\}$.
		\item Update $x_j$.
	\end{enumerate}
	This is equivalent to the following procedure:
	\begin{enumerate}
		\item[1'.] Randomly and uniformly select an index $i_1$ from $[4]$.
		\item[2'.] Randomly and uniformly select an index $i_2$ from $[4] \setminus \{i_1\}$.
		\item[3'.] Randomly and uniformly select an index $i_3$ from $[4] \setminus \{i_1,i_2\}$.
		\item[4'.] Update $x_j$, where $\{j\} = [4] \setminus \{i_1,i_2,i_3\}$.
	\end{enumerate}
	Note the hierarchical structure:
	Steps 2'-4' form one step of the Gibbs sampler targeting $\Pi_{-\{i_1\} \mid \{i_1\}}(\cdot \mid x_{i_1})$.
	Step 3'-4' make up one step of the Gibbs sampler targeting the conditional distribution of $X_{i_3}$ and $X_j$ given $X_{i_1} = x_{i_1}$ and $X_{i_2} = x_{i_2}$.
	Finally, step 4' alone can be regarded as one step of the Gibbs sampler targeting the conditional distribution of $X_j$ given all other components.
	
%	\begin{figure}[!htbp]
%		\centering
%		\includegraphics[scale = 0.55]{Gibbs_illustration_1.pdf}
%		\caption{An example of the standard Gibbs sampler and its recursive form. Here the number of components is $n = 4$. Fixed components are denoted in gray. Top: The standard Gibbs sampler. It randomly selects three components, say, $\{x_1,x_2,x_3\}$, to fix, and updates $x_4$. 
%		Bottom: The recursive Gibbs sampler, it sequentially selects three components -- in this case, $x_3, x_2, x_1$ -- to fix, and finally updates $x_4$.  Here $\Lambda$ gives the indices of the fixed components. 
%		Each dotted box stands for a recursive Gibbs sampler: The outermost is the original sampler, the next one is a Gibbs sampler after fixing $x_3$, etc. 
%		The recursive formation is clear from the nested structure of these boxes.}
%		\label{fig:gibbs-std-recur}
%	\end{figure}

%	The above observation reveals that we can rewrite the Gibbs sampler as a recursive algorithm, as we can replace~$\Pi$ with $\Pi_{- \{i\} \mid \{i\}}(\cdot \mid x_i)$, and repeat the argument until the target distribution is univariate. 
%	When $n = 1$, the  algorithm works the same as the standard Gibbs sampler. When $n \geq 2$,
%	the recursive algorithm first randomly chooses one component $x_i$ to fix, and then calls itself targeting at the $(n-1)$-variate distribution $\Pi(\cdot \mid x_i)$. 
%	The procedure is illustrated by an example in Figure \ref{fig:gibbs-std-recur}. The dotted boxes from outermost to innermost correspond to  different instances of Gibbs samplers on decreasing dimensions.  
As we will see,	this hierarchical structure  not only  reformulates the original Gibbs sampler, but also  leads to non-trivial bounds on the spectral gap. 
	
%	In summary, a Gibbs sampler on an $n$-variate distribution can be viewed as a recursive call of Gibbs samplers on $n$-, $\dots$, $1$-variate distributions. As a consequence, the spectral gap of the original algorithm can be lower bounded by the product of the spectral gaps of these Gibbs samplers on smaller instances.

	\section{Preliminaries}\label{sec:prelim}
	
	Consider a probability space $(E, \mathcal{F}, \nu)$.
	We use $L^2(\nu)$ to denote the set of measurable functions $f: E \to \mathbb{R}$ such that 
	\[
	\int_E f(x)^2 \, \nu(\df x) < \infty.
	\]
	For $f, g \in L^2(\nu)$, one can define their inner product
	\[
	\langle f, g \rangle_{\nu} = \int_E f(x) g(x) \, \nu(\df x).
	\]
	In particular, the $L^2$ norm of a function $f \in L^2(\nu)$ is $\|f\|_{\nu} = \sqrt{\langle f, f \rangle_{\nu}}$.
	Two functions in $L^2(\nu)$ are equal if their difference has a vanishing $L^2$ norm.
	$L^2(\nu)$ forms a Hilbert space.
	We use $L_0^2(\nu)$ to denote the subspace of $L^2(\nu)$ consisting of functions~$f$ such that
	\[
	\langle f, 1 \rangle_{\nu} = \int_E f(x) \, \nu(\df x) = 0.
	\]
	Also, $L_*^2(\nu)$ is used to denote the set of probability measures $\omega: \mathcal{F} \to [0,1]$ such that~$\omega$ is absolutely continuous with respect to~$\nu$, and that $\df \omega/ \df \nu \in L^2(\nu)$.
	For $\omega_1, \omega_2 \in L_*^2(\nu)$, their $L^2$ distance is
	\[
	\|\omega_1 - \omega_2 \|_{\nu} =  \left\| \frac{\df \omega_1}{\df \nu} - \frac{\df \omega_2}{\df \nu} \right\|_{\nu}.
	\]

	Let $K: E \times \mathcal{F} \to [0,1]$ be a transition kernel that describes the transition law of a Markov chain $(X(t))_{t=0}^{\infty}$.
	We say that~$\nu$ is a stationary distribution of $(X(t))$ if
	\[
	\nu K (\cdot) := \int_{E} K(x,\cdot) \, \nu(\df x) = \nu(\cdot).
	\]
	Suppose that $\nu K = \nu$.
	For $f \in L^2(\nu)$ and $x \in E$, define
	\[
	Kf(x) = \int_{E} f(x') K(x, \df x').
	\]
	If $f \in L_0^2(\nu)$, then $Kf \in L_0^2(\nu)$.
	Then we can view~$K$ as a linear operator on $L_0^2(\nu)$, referred to as a Markov operator.
	Using Cauchy-Schwarz, one can show that
	\[
	\|K\|_{\nu} := \sup_{\stackrel{f \in L_0^2(\nu)}{f \neq 0}} \frac{\|Kf\|_{\nu}}{\|f\|_{\nu}} \leq 1,
	\]
	where $\|K\|_{\nu}$ is called the $L^2$ norm of~$K$.

	The above framework is particularly useful in the study of reversible chains.
	A chain associated with~$K$, where $\nu K = \nu$, is said to be reversible with respect to~$\nu$ if~$K$ is self-adjoint, i.e., for $f, g \in L_0^2(\nu)$,
	\[
	\langle Kf, g \rangle_{\nu} = \langle f, Kg \rangle_{\nu}.
	\]
	All chains studied in this paper are reversible with respect to their respective stationary distributions.
	Suppose that~$K$ is self-adjoint.
	Then the spectral gap of~$K$ (or that of a chain associated with~$K$) is $1 - \|K\|_{\nu}$.
	The magnitude of the spectral gap governs how fast a Markov chain associated with~$K$ converges to its stationary distribution~$\nu$, with a larger gap indicating faster convergence.
	Indeed, the following well-known result states that $\|K\|_{\nu}$ is in fact the $L^2$ convergence rate of the chain.
	\begin{lemma} \citep{roberts1997geometric} \label{lem:roberts}
		Let $(X(t))_{t=0}^{\infty}$ be a chain reversible with respect to~$\nu$ and let~$K$ be its Markov operator.
		For $\omega \in L_*^2(\nu)$ and $t \geq 0$, let $\omega K^t$ be the distribution of $X(t)$ if $X(0) \sim \omega$.
		Then, for $\rho < 1$, $\|K\|_{\nu} \leq \rho$ if and only if the following holds:
		For $\omega \in L_*^2(\nu)$, there exists a constant $C_{\omega} < \infty$ such that, for $t \geq 0$,
		\[
		\|\omega K^t - \nu\|_{\nu} \leq C_{\omega} \rho^t.
		\]
	\end{lemma}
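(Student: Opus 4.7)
The plan is to reduce both directions to the identity
\[
\|\omega K^t - \nu\|_\nu = \|K^t(h_\omega - 1)\|_\nu, \qquad h_\omega := \frac{\df \omega}{\df \nu},
\]
noting that $h_\omega - 1 \in L_0^2(\nu)$ since $\int h_\omega \, \df \nu = 1$. Establishing this identity is the first step: reversibility of $K$ with respect to $\nu$ together with self-adjointness yields that $\omega K$ is $\nu$-absolutely continuous with density $K h_\omega$ (verified by testing against indicator functions), and iterating gives density $K^t h_\omega$ for $\omega K^t$; subtracting $1 = K^t 1$ yields the claim. The lemma then becomes a quantitative statement about the decay of $\|K^t g\|_\nu$ for $g \in L_0^2(\nu)$.

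For the direction ($\Rightarrow$), assume $\|K\|_\nu \le \rho$. The submultiplicative bound $\|K^t g\|_\nu \le \|K\|_\nu^t \|g\|_\nu$ applied to $g = h_\omega - 1$ gives the result with $C_\omega = \|h_\omega - 1\|_\nu$, which is finite because $h_\omega \in L^2(\nu)$ by the definition of $L_*^2(\nu)$.

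For the converse ($\Leftarrow$), assume the convergence bound holds. Given any bounded $f \in L_0^2(\nu)$ with $\|f\|_\infty \le M$, the function $h := 1 + f/M$ is a nonnegative, bounded probability density, defining a measure $\omega \in L_*^2(\nu)$; the identity above then gives $\|K^t f\|_\nu = M\|\omega K^t - \nu\|_\nu \le M C_\omega \rho^t$ for every $t \ge 0$. To convert this sequence-wise bound into the single-step inequality $\|Kf\|_\nu \le \rho\|f\|_\nu$, I will invoke the log-convex inequality
\[
\|K^k f\|_\nu^2 = \langle K^{k-1} f, K^{k+1} f\rangle_\nu \le \|K^{k-1} f\|_\nu \, \|K^{k+1} f\|_\nu,
\]
which follows from self-adjointness and Cauchy-Schwarz. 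Iterating shows that the ratios $\|K^{k+1} f\|_\nu / \|K^k f\|_\nu$ are nondecreasing in $k$, so $\|Kf\|_\nu^n \le \|f\|_\nu^{n-1} \|K^n f\|_\nu$ for every $n \ge 1$. Substituting the decay estimate and sending $n \to \infty$ yields $\|Kf\|_\nu \le \rho\|f\|_\nu$ for every bounded mean-zero $f$. Since such functions are dense in $L_0^2(\nu)$ and $K$ is bounded with $\|K\|_\nu \le 1$, the inequality extends by continuity to all of $L_0^2(\nu)$, giving $\|K\|_\nu \le \rho$.

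The main obstacle is the converse direction: the hypothesis controls $\|K^t g\|_\nu$ only for $g$ of the form $h_\omega - 1$, a proper subset of $L_0^2(\nu)$, and only along integer $t$, whereas the conclusion is a single-step operator-norm bound on all of $L_0^2(\nu)$. Both gaps are bridged by the log-convexity of $k \mapsto \log \|K^k f\|_\nu$, which is the only nontrivial technical ingredient.
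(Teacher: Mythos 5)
The paper does not prove this lemma; it is quoted verbatim from \citet{roberts1997geometric}, whose proof of Theorem~2.1 there relies on the spectral theorem for bounded self-adjoint operators (writing $K=\int\lambda\,\df E(\lambda)$ and reading off the decay rate from the essential supremum of the spectrum). Your argument reaches the same conclusion by a genuinely different and more elementary route: you replace the spectral theorem with the log-convexity of $k\mapsto\log\|K^k f\|_\nu$, which follows from self-adjointness and Cauchy--Schwarz via $\|K^kf\|_\nu^2=\langle K^{k-1}f,K^{k+1}f\rangle_\nu\le\|K^{k-1}f\|_\nu\|K^{k+1}f\|_\nu$. This is correct, and the supporting steps check out: the identity $\|\omega K^t-\nu\|_\nu=\|K^t(h_\omega-1)\|_\nu$ follows from $\omega K$ having density $Kh_\omega$ (a consequence of self-adjointness tested against indicators) and $K^t1=1$; the test measures $h=1+f/M$ are genuine elements of $L_*^2(\nu)$ for bounded mean-zero $f$; the derived inequality $\|Kf\|_\nu^n\le\|f\|_\nu^{n-1}\|K^nf\|_\nu$ combined with the hypothesis and $n\to\infty$ gives $\|Kf\|_\nu\le\rho\|f\|_\nu$ for bounded $f$ (the degenerate case where some $\|K^kf\|_\nu=0$ propagates backward through the same log-convexity to force $Kf=0$, so it causes no trouble); and the extension to all of $L_0^2(\nu)$ by density and boundedness of $K$ is standard. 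What the spectral-theorem proof buys is a cleaner conceptual picture (the norm literally is the spectral radius, and $\|K^t\|_\nu=\|K\|_\nu^t$ for self-adjoint $K$); what your argument buys is that it avoids the machinery of projection-valued measures entirely, using only Cauchy--Schwarz and a density argument, and it makes the role of self-adjointness more transparent.
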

	
	The Markov operator~$K$ is said to be positive semi-definite if it is self-adjoint, and $\langle f, Kf \rangle_{\nu} \geq 0$ for $f \in L_0^2(\nu)$.
	In this case, the following formula holds:
	\[
	\|K\|_{\nu} = \sup_{\stackrel{f \in L_0^2(\nu)}{f \neq 0}} \frac{\langle f, Kf \rangle_{\nu} }{\|f\|_{\nu}^2}.
	\]
	It is well-known \citep[see, e.g.,][]{liu1995covariance} that operators of random-scan Gibbs algorithms are positive semi-definite.

	\section{A Hierarchical Structure} \label{sec:main}
	
	\subsection{Gibbs samplers and their recursive forms}
	
	Let $(\X_1,\B_1,\mu_1), \dots, (\X_n,\B_n,\mu_n)$ be $\sigma$-finite measure spaces, where~$n$ is a positive integer that is at least~2.
	Assume that in each space, singletons are measurable.
	Suppose that, for $i=1,\dots,n$, $X_i$ is an $\X_i$-valued random element, and that $\tX = (X_1,\dots,X_n)$ has a joint distribution~$\Pi$.
	
	Assume that~$\Pi$  is absolutely  continuous with respect to the base measure $\mu_1 \times \dots \times \mu_n$  with Radon-Nikodym derivative (density) $\pi$, so that~$\pi$ is a measurable function on $\X := \X_1 \times \dots \times \X_n$.
	Although Radon-Nikodym derivatives only need to be defined outside a null set, for concreteness we insist that~$\pi$ is specified everywhere on~$\X$.
	While these assumptions would seem more rigid than necessary, they bring a great deal of technical and notational convenience.
	For a nonempty set of indices $\Gamma= \{\gamma_1,\dots,\gamma_{|\Gamma|}\} \subset [n]$, where $\gamma_1 < \dots < \gamma_{|\Gamma|}$, let $\X_{\Gamma} = \X_{\gamma_1} \times \dots \times \X_{\gamma_{|\Gamma|}}$, 
	% $\B_{\Gamma} = \B_{\gamma_1} \times \dots \times \B_{\gamma_{|\Gamma|}}$, 
	$\mu_{\Gamma} = \mu_{\gamma_1} \times \dots \times \mu_{\gamma_{|\Gamma|}}$, and $\tX_{\Gamma} = (X_{\gamma_1}, \dots, X_{\gamma_{|\Gamma|}})$.
	Also, for $\Gamma$ given above and $\tx = (x_1,\dots,x_n) \in \X$, where $x_i \in \X_i$ for each~$i$, let $\tx_{\Gamma} = (x_{\gamma_1}, \dots, x_{\gamma_{|\Gamma|}})$.
	For any $\Gamma$ such that $1 \leq |\Gamma| \leq n-1$, the marginal density of $\tX_{\Gamma}$ evaluated at any $\ty \in \X_{\Gamma}$ is 
	\[
	\pi_{\Gamma}(\ty) = \int_{\X_{- \Gamma}} \pi(\tx) \, \df \tx_{-\Gamma}, \quad \text{where } \tx \in \X \text{ satisfies } \tx_{\Gamma} = \ty.
	\]
	In the above equation, $-\Gamma = [n] \setminus \Gamma$, and $\df \tx_{-\Gamma}$ is a short-hand notation for $\mu_{- \Gamma}( \df \tx_{- \Gamma} )$.
	By convention, $\pi_{[n]} = \pi$.
	For nonempty sets $\Lambda, \Gamma \subset [n]$ such that $\Lambda \cap \Gamma = \emptyset$, the conditional density of $\tX_{\Gamma}$ given $\tX_{\Lambda} = \ty \in \X_{\Lambda}$, denoted by $\pi_{\Gamma \mid \Lambda}(\cdot \mid \ty)$, is defined for $\ty \in \X_{\Lambda}$ such that $\pi_{\Lambda}(\ty) > 0$, and given by
	\[
	\pi_{\Gamma \mid \Lambda}(\tz \mid \ty) = \frac{\pi_{\Lambda \cup \Gamma}( \tx_{\Lambda \cup \Gamma} )}{\pi_{\Lambda}(\ty)}, \quad \text{where } \tx \in \X \text{ satisfies } \tx_{\Lambda} = \ty \text{ and } \tx_{\Gamma} = \tz.
	\]
	If $\pi_{\Lambda}(\ty) = 0$, we let $\pi_{\Gamma \mid \Lambda}(\cdot \mid \ty)$ be an arbitrary probability density function on $\X_{\Gamma}$.
	By convention, if $\Lambda = \emptyset$, $\pi_{\Gamma \mid \Lambda}(\cdot \mid \ty)$ means $\pi_{\Gamma}(\cdot)$, even though $\ty \in \X_{\emptyset}$ cannot be specified.

	A random-scan Gibbs sampler targeting~$\pi$ with block size~$l$ is described in Algorithm~\ref{alg:gibbs}.
	In short, given the current state $\tx \in \X$, the sampler randomly selects a subset~$\Gamma$ of indices, and updates $\tx_{\Gamma}$ using the conditional distribution of $\tX_{\Gamma}$ given $\tX_{- \Gamma} = \tx_{- \Gamma}$.
	In many applications, a block size of~1 is used because it becomes more difficult to draw from the corresponding full conditional distributions when the block size increases.
	Regardless of the block size, the underlying Markov chain is reversible with respect to~$\pi$.
	
	\setcounter{algorithm}{0}
	
	\begin{algorithm}
		\caption{One step of the Gibbs sampler targeting~$\pi$, block size~$l$, where $l \in \{1,\dots,n\}$:}\label{alg:gibbs}
		\begin{algorithmic}
			\State \textbf{Input:} Current state $\tx \in \X$.
			\State Randomly and uniformly choose a subset of indices $\Gamma \subset [n]$ under the constraint $|\Gamma| = l$.
			\State Draw $\tw \in \X_{\Gamma}$ from $\pi_{\Gamma \mid - \Gamma}(\cdot \mid \tx_{- \Gamma})$.
			\State Let $\tx' \in \X$ be such that $\tx'_{\Gamma} = \tw$ and $\tx'_{- \Gamma} = \tx_{- \Gamma}$.
			\State \textbf{Return:} Next state $\tx'$.
		\end{algorithmic}
	\end{algorithm}
	
	Algorithm~\ref{alg:gibbs} is a special case of Algorithm~\ref{alg:blo-gibbs}, which follows the same procedure, but targets $\pi_{- \Lambda \mid \Lambda}(\cdot \mid \ty)$ for some $\Lambda \subset [n]$ such that $|\Lambda| \leq n-1$ and $\ty \in \X_{\Lambda}$.
	When $\pi_{\Lambda}(\ty) > 0$, the underlying Markov chain is reversible with respect to $\pi_{- \Lambda \mid \Lambda}(\cdot \mid \ty)$.
	Taking $\Lambda = \emptyset$ in Algorithm~\ref{alg:blo-gibbs} yields Algorithm~\ref{alg:gibbs}.

	\begin{algorithm}
		\caption{One step of the Gibbs sampler targeting $\pi_{- \Lambda \mid \Lambda}(\cdot \mid \ty)$, block size $l$, where $l \in \{1,\dots,n-|\Lambda|\}$:}\label{alg:blo-gibbs}
		\begin{algorithmic}
			\State \textbf{Input:} Current state $\tz \in \X_{- \Lambda}$.
			\State Let $\tx \in \X$ be such that $\tx_{\Lambda} = \ty$ and $\tx_{- \Lambda} = \tz$.
			\State Randomly and uniformly choose a set of indices $\Gamma \subset - \Lambda$ under the constraint $|\Gamma| = l$.
			\State Draw $\tw \in \X_{\Gamma}$ from $\pi_{\Gamma \mid - \Gamma}(\cdot \mid \tx_{- \Gamma})$.
			\State Let $\tx' \in \X$ be such that $\tx'_{\Gamma} = \tw$ and $\tx'_{- \Gamma} = \tx_{- \Gamma}$.
			\State \textbf{Return:} New state $\tz' = \tx'_{- \Lambda}$.
		\end{algorithmic}
	\end{algorithm}
	
	\begin{algorithm}[htbp]
		\caption{One step of the recursive Gibbs sampler targeting $\pi_{- \Lambda \mid \Lambda}(\cdot \mid \ty)$, block size~$l$, where $l \in \{1,\dots, n - |\Lambda|\}$:}\label{alg:rec-gibbs}
		\begin{algorithmic}
			\State \textbf{Input:} Current state $\tz \in \X_{- \Lambda}$.
			\State Let $\tx \in \X$ be such that $\tx_{\Lambda} = \ty$ and $\tx_{- \Lambda} = \tz$.
			
			\If{$|\Lambda| = n-l$}
			\State Draw $\tz' \in \X_{- \Lambda}$ from $\pi_{- \Lambda \mid \Lambda}(\cdot \mid \tx_{\Lambda})$.
			\State \textbf{Return:} New state $\tz'$.

			\Else
			
			\State Randomly and uniformly choose a coordinate $i \in - \Lambda$.
			\State Draw $\tw \in \X_{- (\Lambda \cup \{i\})}$ by running one step of the recursive Gibbs sampler targeting $\pi_{- (\Lambda \cup \{i\}) \mid \Lambda \cup \{i\}}(\cdot \mid \tx_{\Lambda \cup \{i\}})$ with block size~$l$ and current state $\tx_{- (\Lambda \cup \{i\})}$.
			\State Let $\tx' \in \X$ be such that $\tx'_{- (\Lambda \cup \{i\})} = \tw$ and $\tx'_{\Lambda \cup \{i\}} = \tx_{\Lambda \cup \{i\}}$.
			\State \textbf{Return:} New state $\tz' = \tx'_{- \Lambda}$.
			
			\EndIf
		\end{algorithmic}
	\end{algorithm}
	
	Our analysis begins with the observation that Algorithm~\ref{alg:blo-gibbs} has a hierarchical, or recursive structure.
	Indeed, following arguments given in Section~\ref{ssec:highlevel}, we see that Algorithm~\ref{alg:blo-gibbs} can be written into a recursive form as in Algorithm~\ref{alg:rec-gibbs}.
	In particular, Algorithm~\ref{alg:gibbs} is equivalent to Algorithm~\ref{alg:rec-gibbs} for $\Lambda = \emptyset$.

	Consider the significance of the recursive representation.
	It connects the Gibbs sampler targeting $\pi_{- \Lambda \mid \Lambda}$ to ones targeting lower dimensional distributions, given by $\pi_{- \Lambda' \mid \Lambda'}$ where $\Lambda' \supset \Lambda$.
	While one would rarely implement the recursive algorithm in practice, based on it we can establish multiple intriguing relations concerning the convergence rate and spectral gap of the standard Algorithm~\ref{alg:gibbs}.
	We now list these relations.
	The detailed derivation is given in Section~\ref{sec:derivation}.

	\subsection{The spectral telescope}
%	For $l \in \{1,\dots,n-1\}$, $\Lambda \subset [n]$ such that $|\Lambda| \leq n - l - 1$, and $\ty \in \X_{\Lambda}$,
	For $\Lambda \subset [n]$ such that $|\Lambda| \in \{0,\dots,n-1\}$, $\ty \in \X_{\Lambda}$, and $l \in \{1,\dots,n-|\Lambda|\}$, 
	let $\mbox{gap}  (\Lambda,\ty,l)$ be the spectral gap associated with Algorithm~\ref{alg:blo-gibbs}.
	For $l \in \{1,\dots,n\}$ and $m \in \{l,\dots,n\}$, let
	\[
	\mbox{Gap} (m,l) = \min_{\stackrel{\Lambda \subset [n]}{|\Lambda| = n-m}}  \inf_{\stackrel{\ty \in \X_{\Lambda}}{\pi_{\Lambda}(\ty) > 0}} \mbox{gap} (\Lambda, \ty, l).
	\]
	In particular, $\mbox{Gap}(n,l)$ is simply the spectral gap of Algorithm~\ref{alg:gibbs}.
	Our main result is a consequence of the hierarchical structure of Gibbs samplers.
	\begin{theorem} [Spectral Telescope] \label{thm:gap}
		For $l \in \{1,\dots,n-1\}$ and $m \in \{l+1,\dots,n\}$,
		\[
		\mbox{Gap}  (m,l) \geq \mbox{Gap}  (m,m-1)  \mbox{Gap} (m-1,l).
		\]
		In particular, for $l \in \{1,\dots,n-1\}$,
		\[
		\mbox{Gap} (n,l) \geq \prod_{m=l+1}^{n} \mbox{Gap} (m,m-1).
		\]
	\end{theorem}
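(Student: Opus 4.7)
My plan is to work entirely at the level of Markov operators on $L_0^2(\nu)$, using the recursive form of Algorithm~\ref{alg:blo-gibbs} (i.e., Algorithm~\ref{alg:rec-gibbs}) to write the block-$l$ Gibbs operator as a convex combination of operators indexed by the coordinate pinned at the outermost step. Fix $\Lambda\subset[n]$ with $|\Lambda|=n-m$ and $\ty\in\X_\Lambda$ with $\pi_\Lambda(\ty)>0$, and set $\nu=\pi_{-\Lambda\mid\Lambda}(\cdot\mid\ty)$. Let $K$ be the operator of Algorithm~\ref{alg:blo-gibbs} at block size $l$. Reading Algorithm~\ref{alg:rec-gibbs} line by line, one obtains
\[
K=\frac{1}{m}\sum_{i\in-\Lambda}Q_i,
\]
where $Q_i$ holds $z_i$ fixed and runs one step of the block-$l$ Gibbs sampler targeting $\pi_{-(\Lambda\cup\{i\})\mid\Lambda\cup\{i\}}(\cdot\mid(\ty,z_i))$. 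In parallel, reading Algorithm~\ref{alg:blo-gibbs} at block size $m-1$ (pick one coordinate to pin, refresh all others in one shot) gives $\bar P=\frac{1}{m}\sum_{i\in-\Lambda}P_i$, where $P_i$ draws the coordinates $-(\Lambda\cup\{i\})$ from their full conditional given $(\ty,z_i)$ in a single draw.

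\noindent\textbf{Key operator identities.} The first step is to record that $P_i$ is the orthogonal projection in $L^2(\nu)$ onto functions that depend on $\tz$ only through $z_i$; in particular $P_i=P_i^*=P_i^2$. Because $Q_i$ is self-adjoint on $L^2(\nu)$ and, for $\nu$-a.e.\ $z_i$, preserves the conditional law of $\tz_{-(\Lambda\cup\{i\})}$ given $z_i$, one has $P_iQ_i=Q_iP_i=P_i$. Moreover $K$, $\bar P$, and each $Q_i$ are positive semi-definite, as noted in Section~\ref{sec:prelim}.

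\noindent\textbf{Core inequality via orthogonal decomposition.} For $f\in L_0^2(\nu)$, I will decompose $f=P_if+(I-P_i)f$. The identities above make the cross terms in $\langle f,Q_if\rangle_\nu$ vanish, leaving
\[
\langle f,Q_if\rangle_\nu=\|P_if\|_\nu^2+\langle (I-P_i)f,\,Q_i(I-P_i)f\rangle_\nu.
\]
On each fiber $\{z_i\}$, the function $(I-P_i)f$ has conditional mean zero, so the definition of $\mbox{Gap}(m-1,l)$ applied fiberwise and integrated against the marginal of $z_i$ yields
\[
\langle (I-P_i)f,\,Q_i(I-P_i)f\rangle_\nu\le\bigl(1-\mbox{Gap}(m-1,l)\bigr)\|(I-P_i)f\|_\nu^2.
\]
Averaging over $i\in-\Lambda$ and using $\frac{1}{m}\sum_i\|P_if\|_\nu^2=\langle f,\bar Pf\rangle_\nu$, a short rearrangement gives
\[
\langle f,Kf\rangle_\nu\le\bigl(1-\mbox{Gap}(m-1,l)\bigr)\|f\|_\nu^2+\mbox{Gap}(m-1,l)\,\langle f,\bar Pf\rangle_\nu.
\]
Bounding $\langle f,\bar Pf\rangle_\nu\le\bigl(1-\mbox{Gap}(m,m-1)\bigr)\|f\|_\nu^2$ by the definition of $\mbox{Gap}(m,m-1)$ and appealing to the PSD formula for $\|K\|_\nu$ from Section~\ref{sec:prelim} yields $\|K\|_\nu\le 1-\mbox{Gap}(m,m-1)\mbox{Gap}(m-1,l)$. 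Taking the minimum/infimum over admissible $(\Lambda,\ty)$ proves the first inequality of the theorem, and a straightforward downward induction on $m$ (terminating at the trivial $\mbox{Gap}(l,l)=1$) yields the product form.

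\noindent\textbf{Main obstacle.} I expect the hardest part to be the rigorous justification, in the general measure-theoretic setting of the paper, of the identities $P_i^2=P_i$ and $P_iQ_i=Q_iP_i=P_i$, together with the fiberwise application of $\mbox{Gap}(m-1,l)$. These rely on having regular conditional distributions for $\tz_{-(\Lambda\cup\{i\})}$ given $z_i$ and on the measurability of the inner operators $Q_i$ across fibers; they should follow from standard disintegration arguments under the $\sigma$-finiteness assumptions on the $(\X_i,\B_i,\mu_i)$ already stated in Section~\ref{sec:main}.
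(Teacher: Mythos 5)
Your proposal is correct and follows essentially the same route as the paper: decomposing the block-$l$ operator as an average of fiber-pinned operators (your $Q_i$, the paper's $K_{i,x}$), splitting $f = P_if + (I-P_i)f$, observing that the cross terms vanish and that the diagonal term is controlled fiberwise by $\mbox{Gap}(m-1,l)$, then averaging to recover $\bar{P}$ and invoking $\mbox{Gap}(m,m-1)$. The paper's version writes these steps out via explicit conditional expectations rather than your compact operator identities $P_i^2=P_i$ and $P_iQ_i=Q_iP_i=P_i$, but the underlying argument is identical.
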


	Theorem~\ref{thm:gap} describes a quasi-telescoping property of the sequence $\mbox{Gap}(n,n-1), \dots, \mbox{Gap}(2,1)$.
	We dub it ``the spectral telescope."
	From here we see that it is possible to bound $\mbox{Gap}(n,l)$ from below via lower bounds on $\mbox{Gap}(m,m-1)$ for $m \in \{l+1,\dots,n\}$.
	All other major results in this section are obtained via this strategy.

	\subsection{Spectral gaps and correlation coefficients} \label{sec:correlation}
	
	Let $(Y_1,\dots,Y_m)$ be a vector of random elements taking values in a product space $\Y_1 \times \dots \times \Y_m$.
	For $i = 1,\dots,m$, let $\varpi_i$ be the marginal distribution of $Y_i$, and note that $L_0^2(\varpi_i)$ represents the collection of real functions~$f$ on $\Y_i$ such that 
	\[
	\E[f(Y_i)] = \int_{\Y_i} f(y) \, \varpi_i(\df y) = 0, \quad \E[f(Y_i)^2] = \int_{\Y_i} f(y)^2 \, \varpi_i(\df y) < \infty.
	\]
	Define the summation-based correlation coefficient of $(Y_1,\dots,Y_m)$ to be
	\[
	s_*(Y_1,\dots,Y_m) = \sup_{\stackrel{f_i \in L_0^2(\varpi_i) \; \forall i}{\exists i \text{ s.t. } \E[f_i(Y_i)^2] > 0}} \frac{\E \left\{ \left[ \sum_{i=1}^{m} f_i(Y_i) \right]^2 \right\}}{m \sum_{i=1}^m \E [ f_i(Y_i)^2 ] }.
	\]
	$s_*(Y_1,\dots,Y_m)$ can range from $1/m$ to~1.
	To establish the lower bound, take $f_i = 0$ for $i \geq 2$.
	To establish the upper bound, use Cauchy-Schwarz. 
	If $Y_1,\dots,Y_m$ are independent, then this coefficient is $1/m$.
	If there exists a sequence of functions $f_1,\dots,f_m$ such that $f_i \in L_0^2(\varpi_i)$ and $f_i \neq 0$ for $i \in [m]$, and $f_i(Y_i) = f_j(Y_j)$ for $i,j \in [m]$, then the correlation coefficient is~1. 
	
	For $\Lambda \subset [n]$ such that $|\Lambda| \leq n-1$ and $\ty \in \X_{\Lambda}$, let
	\[
	s(\Lambda,\ty) = s_*(Y_1,\dots,Y_{n-|\Lambda|}),
	\]
	where $(Y_1,\dots,Y_{n-|\Lambda|})$ is distributed according to $\pi_{- \Lambda \mid \Lambda}(\cdot \mid \ty)$, i.e., the conditional distribution of $\tX_{- \Lambda}$ given $\tX_{\Lambda} = \ty$.
	For $m \in \{2,\dots,n\}$, let
	\[
	S(m) = \max_{\stackrel{\Lambda \subset [n]}{|\Lambda| = n-m}} \sup_{\stackrel{\ty \in \X_{\Lambda}}{\pi_{\Lambda}(\ty) > 0}} s(\Lambda,\ty).
	\]
	Then the following holds.
	\begin{corollary} \label{cor:corr}
		For $m \in \{2,\dots,n\}$, 
		\[
		\mbox{Gap}(m,m-1) \geq 1 - S(m).
		\]
		As a result, by Theorem~\ref{thm:gap}, for $l \in \{1,\dots,n-1\}$,
		\[
		\mbox{Gap} (n,l) \geq \prod_{m=l+1}^n [1 - S(m)].
		\]
	\end{corollary}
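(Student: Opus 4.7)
The plan is to prove the first inequality, $\mbox{Gap}(m,m-1) \geq 1 - S(m)$, directly from the structure of the block-Gibbs sampler; the product bound then follows immediately from Theorem~\ref{thm:gap}. Fix $\Lambda \subset [n]$ with $|\Lambda| = n-m$ and $\ty \in \X_{\Lambda}$ with $\pi_{\Lambda}(\ty) > 0$, and write $\rho = \pi_{- \Lambda \mid \Lambda}(\cdot \mid \ty)$. Let $K$ be the Markov operator on $L_0^2(\rho)$ corresponding to Algorithm~\ref{alg:blo-gibbs} with block size $l = m-1$, and for each $i \in - \Lambda$ let $P_i$ denote the conditional expectation operator on $L^2(\rho)$, $(P_i f)(\tz) = \int f(\tz') \, \rho(\df \tz' \mid z_i)$, which is a self-adjoint orthogonal projection onto the subspace of functions depending only on $z_i$.

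The first step is a representation of $K$ as an average of projections. Choosing a block $\Gamma \subset -\Lambda$ of size $m-1$ is equivalent to choosing a single index $i \in - \Lambda$ to hold fixed and resampling the remaining coordinates from the conditional given $z_i$, so $K = m^{-1} \sum_{i \in - \Lambda} P_i$. Using that each $P_i$ is a self-adjoint projection (so $\langle P_i f, f\rangle_\rho = \|P_i f\|_\rho^2$), this yields, for any $f \in L_0^2(\rho)$, the identity $\langle f, Kf\rangle_\rho = m^{-1} \sum_{i \in -\Lambda} \|P_i f\|_\rho^2$.

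The second step converts this into a bound involving $s(\Lambda,\ty)$. Set $g_i = P_i f$, so each $g_i \in L_0^2(\varpi_i)$, where $\varpi_i$ is the marginal of $Y_i$ under $\rho$. By self-adjointness of $P_i$, $\sum_i \|g_i\|_\rho^2 = \sum_i \langle g_i, f\rangle_\rho = \bigl\langle \textstyle\sum_i g_i, f \bigr\rangle_\rho$, which is at most $\bigl\|\sum_i g_i\bigr\|_\rho \|f\|_\rho$ by Cauchy--Schwarz. Applying the definition of $s_*$ to the tuple $(g_i)_{i \in - \Lambda}$ gives $\bigl\|\sum_i g_i\bigr\|_\rho^2 \leq m \, s(\Lambda, \ty) \sum_i \|g_i\|_\rho^2$. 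Combining and rearranging produces $\sum_i \|P_i f\|_\rho^2 \leq m \, s(\Lambda, \ty) \|f\|_\rho^2$, hence $\langle f, Kf\rangle_\rho \leq s(\Lambda,\ty) \|f\|_\rho^2$. Since $K$ is self-adjoint and positive semi-definite, $\|K\|_\rho \leq s(\Lambda,\ty)$, so $\mbox{gap}(\Lambda,\ty,m-1) \geq 1 - s(\Lambda,\ty)$; taking the worst case over $\Lambda$ and $\ty$ gives the first assertion, and Theorem~\ref{thm:gap} delivers the product bound.

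I expect the main obstacle to be the clean identification of the block-Gibbs operator $K$ with the averaged projection $m^{-1} \sum_i P_i$ and the Cauchy--Schwarz duality that recognizes $s_*$ as essentially the squared operator norm of $(f_1,\dots,f_m) \mapsto \sum_i f_i$ from $\bigoplus_i L_0^2(\varpi_i)$ into $L_0^2(\rho)$. Once these structural observations are isolated, the inequality reduces to a single application of Cauchy--Schwarz, and no extra machinery beyond Theorem~\ref{thm:gap} is needed.
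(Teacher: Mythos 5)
Your proof is correct, and it takes a genuinely different route from the paper's. The paper proves $\|\bar K\|_\varpi \le s(\Lambda,\ty)$ in two steps: it first restricts $\bar K = m^{-1}\sum_i P_i$ to the subspace $\Range = \sum_i \Range_i$ and shows via Gelfand's formula that $\|\bar K\|_\varpi = \|\bar K|_{\Range}\|_\varpi$ (Lemma~\ref{lem:Knorm}); it then invokes a result of Bj{\o}rstad and Mandel on averages of orthogonal projections (Lemma~\ref{lem:bjorstad}) to bound $\|\bar K|_{\Range}\|_\varpi$ by exactly the quantity that equals $s(\Lambda,\ty)$. You instead bound the Rayleigh quotient $\langle f, Kf\rangle_\rho/\|f\|_\rho^2$ directly for \emph{all} $f \in L_0^2(\rho)$: writing $\langle f,Kf\rangle_\rho = m^{-1}\sum_i\|P_if\|_\rho^2$, applying Cauchy--Schwarz to $\sum_i\|P_if\|_\rho^2 = \langle \sum_i P_if, f\rangle_\rho$, and then using the definition of $s_*$ on $(P_if)_i$ to close the loop. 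This bypasses both the Gelfand argument and the external reference; in effect you reprove the relevant special case of the Bj{\o}rstad--Mandel bound with a single line of Cauchy--Schwarz. The trade-off is that your argument exploits the positive semi-definiteness and projection structure more directly, while the paper's route isolates a reusable operator-norm lemma. Both are sound and give the same constant; yours is shorter and self-contained.
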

	This result relates the convergence rate of Algorithm~\ref{alg:gibbs} to the dependence structure of~$\Pi$.

	\subsection{Spectral gaps and random walks}
	Let $\Lambda \subset [n]$ be such that $|\Lambda| \leq n-1$, and let $\ty \in \X_{\Lambda}$.
	We can define a random walk on the space $\bigcup_{i \in - \Lambda} (\{i\} \times \X_i)$, given by Algorithm~\ref{alg:randomwalk}.
	Whenever $\pi_{\Lambda}(\ty) > 0$, this is a Markov chain reversible with respect to the probability measure $\varphi_{\Lambda,\ty}$ given by
	\[
	\varphi_{\Lambda,\ty}(\{i\} \times A) = \frac{1}{n-|\Lambda|} \int_A \pi_{ \{i\} \mid \Lambda }(x \mid \ty) \, \df x, \quad i \in - \Lambda, \; A \in \B_i,
	\]
	where $\df x$ is a short-hand notation for $\mu_i(\df x)$.
	Let $g(\Lambda,\ty)$ be the spectral gap of this chain.
	For $m \in \{2,\dots,n\}$, let
	\[
	G(m) = \min_{\stackrel{\Lambda \subset [n]}{|\Lambda| = n-m}} \inf_{\stackrel{\ty \in \X_{\Lambda}}{\pi_{\Lambda}(\ty) > 0}} g(\Lambda,\ty).
	\]
	We then have the following result:
	
	\begin{algorithm}
		\caption{One step of a random walk associated with $\Lambda \subset [n]$ and $\ty \in \X_{\Lambda}$:}\label{alg:randomwalk}
		\begin{algorithmic}
			\State \textbf{Input:} Current state $(j,x) \in \bigcup_{i \in - \Lambda} (\{i\} \times \X_i)$.
			\State Let $\tx \in \X$ be such that $\tx_{\Lambda} = \ty$ and $\tx_{\{j\}} = x$.
			\State Randomly and uniformly choose a coordinate $j' \in - \Lambda$.
			
			\If{$j'=j$}
			\State Set $x' = x$.

			\Else
			
			\State Draw $x' \in \X_{\{j'\}}$ from $\pi_{\{j'\} \mid \Lambda \cup \{j\}} (\cdot \mid \tx_{\Lambda \cup \{j\}})$.
			
			\EndIf
			
			\State \textbf{Return:} New State $(j',x')$.
			
		\end{algorithmic}
	\end{algorithm}
	
	\begin{corollary} \label{cor:randomwalk}
		For $m \in \{2,\dots,n\}$, 
		\[
		\mbox{Gap}(m,m-1) \geq G(m).
		\]
		As a result, by Theorem~\ref{thm:gap}, for $l \in \{1,\dots,n-1\}$,
		\[
		\mbox{Gap}  (n,l) \geq \prod_{m=l+1}^n G(m).
		\]
	\end{corollary}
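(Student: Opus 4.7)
The plan is to show the stronger pointwise identity $\mbox{gap}(\Lambda, \ty, m-1) = g(\Lambda, \ty)$ for every admissible pair $(\Lambda, \ty)$, from which $\mbox{Gap}(m,m-1) \geq G(m)$ is immediate; the product bound then follows by plugging this into Theorem~\ref{thm:gap}. Fix $\Lambda$ with $|\Lambda| = n-m$ and $\ty$ with $\pi_\Lambda(\ty) > 0$, write $\nu = \pi_{-\Lambda \mid \Lambda}(\cdot \mid \ty)$ and $\varphi = \varphi_{\Lambda,\ty}$, and introduce two Markov kernels. The ``up'' kernel $U$ sends $(i,x) \in E := \bigcup_{i \in -\Lambda}(\{i\} \times \X_i)$ to a full state $\tx \in \X_{- \Lambda}$ with $\tx_{\{i\}} = x$ and $\tx_{-(\Lambda \cup \{i\})}$ drawn from $\pi_{-(\Lambda \cup \{i\}) \mid \Lambda \cup \{i\}}(\cdot \mid \ty, x)$; the ``down'' kernel $D$ sends $\tx \in \X_{- \Lambda}$ to $(i, \tx_{\{i\}})$ with $i$ uniform in $-\Lambda$.

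The first step is to recognise two compositions. With block size $m-1$, choosing $\Gamma \subset -\Lambda$ of size $m-1$ in Algorithm~\ref{alg:blo-gibbs} amounts to choosing its complement $\{i\} = -\Lambda \setminus \Gamma$; reading the algorithm this way shows the Gibbs Markov kernel equals $DU$. Similarly, $UD$ first samples a full state with $\tx_{\{j\}} = x$ fixed and then selects a uniform coordinate $j'$: when $j' = j$ the output is $(j,x)$, giving the self-loop of Algorithm~\ref{alg:randomwalk}; when $j' \neq j$, marginalising out the unseen coordinates in $\pi_{-(\Lambda \cup \{j\}) \mid \Lambda \cup \{j\}}$ yields exactly the conditional $\pi_{\{j'\} \mid \Lambda \cup \{j\}}(\cdot \mid \tx_{\Lambda \cup \{j\}})$ appearing there. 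Hence the random-walk kernel equals $UD$.

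The second step is a short detailed-balance computation. The chain rule for conditional densities gives $\nu(\df \tx)\, D(\tx, \df(i,x)) = \varphi(\df(i,x))\, U((i,x), \df \tx)$, so viewed as operators $U : L^2(\nu) \to L^2(\varphi)$ and $D : L^2(\varphi) \to L^2(\nu)$ one has $U^* = D$. Since both kernels preserve constants, they restrict to bounded operators between $L_0^2(\nu)$ and $L_0^2(\varphi)$, and
\[
P_G \;=\; DU \;=\; U^* U \quad \text{on } L_0^2(\nu), \qquad P_{RW} \;=\; UD \;=\; U U^* \quad \text{on } L_0^2(\varphi).
\]
The standard Hilbert-space identity $\|U^* U\| = \|U\|^2 = \|U U^*\|$ applied on these mean-zero subspaces gives $\|P_G\|_\nu = \|P_{RW}\|_\varphi$, i.e.\ $\mbox{gap}(\Lambda, \ty, m-1) = g(\Lambda, \ty)$. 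Taking the infimum over $(\Lambda, \ty)$ with $|\Lambda| = n-m$ and $\pi_\Lambda(\ty) > 0$ yields $\mbox{Gap}(m,m-1) = G(m)$, and composing with Theorem~\ref{thm:gap} delivers the product bound.

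I expect the main obstacle to be the algorithmic bookkeeping in the first step: one has to verify carefully that the informal recipes in Algorithms~\ref{alg:blo-gibbs} and~\ref{alg:randomwalk} really do implement the compositions $DU$ and $UD$, with particular attention to the self-loop case $j' = j$ and to how the coordinates sampled by $U$ but discarded by $D$ marginalise cleanly to the conditional distributions stated in the algorithms. Once this matching is in place, the operator-theoretic step is an essentially one-line application of $\|T^*T\| = \|TT^*\|$, and no additional hypothesis beyond those already appearing in the statement is needed.
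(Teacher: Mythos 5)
Your proposal is correct, and it takes a genuinely different route from the paper.  The paper proves Corollary~\ref{cor:randomwalk} by going through the correlation coefficient: Lemma~\ref{lem:corr} shows $\mbox{gap}(\Lambda,\ty,m-1)\geq 1-s(\Lambda,\ty)$ using Gelfand's formula (Lemma~\ref{lem:Knorm}) and a result of Bj\o rstad--Mandel on sums of orthogonal projections (Lemma~\ref{lem:bjorstad}), and Lemma~\ref{lem:rw} shows $g(\Lambda,\ty)=1-s(\Lambda,\ty)$; chaining the two gives the stated inequality.  You instead exhibit the ``up'' kernel $U$ and ``down'' kernel $D$, verify by the chain rule for conditional densities that $U^{*}=D$ (and hence that both restrict to the mean-zero subspaces), and identify $\bar K = U^{*}U$ on $L_0^2(\pi_{-\Lambda\mid\Lambda})$ and $\RW = UU^{*}$ on $L_0^2(\varphi_{\Lambda,\ty})$, so that $\|\bar K\|=\|U\|^2=\|\RW\|$.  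This is essentially the standard two-block data-augmentation argument applied to the lift of the state space onto $\bigcup_i(\{i\}\times\X_i)$, and it buys you something the paper's chain of lemmas does not state: you get the pointwise \emph{equality} $\mbox{gap}(\Lambda,\ty,m-1)=g(\Lambda,\ty)$, hence $\mbox{Gap}(m,m-1)=G(m)$, rather than only $\geq$.  It also bypasses the external Bj\o rstad--Mandel lemma entirely.  The one place to be careful, which you flag, is the kernel bookkeeping: with the Markov-operator composition convention $(DU)f=D(Uf)$, $DU$ indeed acts on $\X_{-\Lambda}$ (pick a uniform coordinate to freeze, redraw the rest) and $UD$ acts on the lifted space (extend, then project to a uniform coordinate, reproducing the $j'=j$ self-loop and the $j'\neq j$ marginal draw of Algorithm~\ref{alg:randomwalk}); your identification of these with the block-$(m-1)$ Gibbs kernel and the random walk kernel, respectively, is correct.
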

	
	This extends a result in~\cite{alev2020improved}, which concerns random walks on pure simplical complexes, a discrete structure frequently studied in computer science.

	\subsection{Spectral independence} \label{ssec:specind}
	
	Let $\Lambda \subset [n]$ be such that $|\Lambda| \leq n-2$, and let $\ty \in \X_{\Lambda}$ be such that $\pi_{\Lambda}(\ty) > 0$.
	Suppose that, for $i \in - \Lambda$, there is a measurable ``distance-like" function $d_{\Lambda,\ty,i}: \X_i \times \X_i \to [0,\infty)$ such that (i) $x=y$ if and only if $d_{\Lambda,\ty,i}(x,y) = 0$, and (ii) $d_{\Lambda,\ty,i}(x,y) = d_{\Lambda,\ty,i}(y,x)$ for $x,y \in \X_i$.
	For $j \in - \Lambda$ such that $i \neq j$ and $x \in \X_i$, let $\Pi^j_{\Lambda,\ty,i,x}$ be the probability measure associated with $\pi_{\{j\} \mid \Lambda \cup \{i\}}(\cdot \mid \tx_{\Lambda \cup \{i\}})$, where $\tx_{\Lambda} = \ty$ and $\tx_{\{i\}} = x$.
	In other words, $\Pi^j_{\Lambda,\ty,i,x}$ is the conditional distribution of $X_j$ given $X_i = x$ and $\tX_{\Lambda} = \ty$.
	Assume that the following conditions hold:
	\begin{enumerate}
		\item [(H1)] 
		For $i \in - \Lambda$,
		\[
		\int_{\X_i} \left[ \int_{\X_i} d_{\Lambda,\ty,i}(x,x') \, \pi_{\{i\} \mid \Lambda} (\df x \mid \ty) \right]^2 \pi_{\{i\} \mid \Lambda} (\df x' \mid \ty) < \infty.
		\]
		\item [(H2)] There exists $k < \infty$ such that, for $i, j \in - \Lambda$ satisfying $i \neq j$ and $\pi_{\{i\} \mid \Lambda}(\cdot \mid \ty)$-almost every $x, x' \in \X_i$, 
		\[
		d_{\scriptsize\mbox{TV}}(\Pi^j_{\Lambda,\ty,i,x}, \Pi^j_{\Lambda,\ty,i,x'}) \leq k d_{\Lambda,\ty,i}(x,x'),
		\]
		where $d_{\scriptsize\mbox{TV}}$ denotes the total variation distance, which is the maximal difference between the probabilities of a measurable set assigned by the two probability measures.
		The constant $k$ may depend on $(\Lambda,\ty)$ but not on $(i,j,x,x')$.
	\end{enumerate}
	Note that if, for $i \in - \Lambda$, $d_{\Lambda,\ty,i}$ is the discrete metric, i.e., $d_{\Lambda,\ty,i}(x,x') = \ind_{x \neq x'}$, then (H1) and (H2) are satisfied.
	
	For two probability distributions $\nu_1$ and $\nu_2$ on $\B_i$, where $i \in [n]$, denote by $C(\nu_1,\nu_2)$ the collection of couplings of $\nu_1$ and $\nu_2$.
	That is, $\nu \in C(\nu_1,\nu_2)$ if and only if~$\nu$ is a probability measure on $\B_i \times \B_i$ such that $\nu(A \times \X_i) = \nu_1(A)$ and $\nu(\X_i \times A) = \nu_2(A)$ for $A \in \B_i$.

	A coupling kernel associated with $(\Lambda,\ty,i,j)$, where $i,j \in - \Lambda$ and $i \neq j$, is a Markov transition kernel $K_{i,j}: \X_i \times \X_i \to \B_j \times \B_j$ such that $K_{i,j}((x,x'), \cdot)$ is a probability measure in $C(\Pi^j_{\Lambda,\ty,i,x}, \Pi^j_{\Lambda,\ty,i,x'})$ for $x,x' \in \X_i$.
	(Of course, $K_{i,j}$ also depends on~$\Lambda$ and~$\ty$, but to suppress notation we do not include them in the subscript.
	The same goes for $\phi_{i,j}$ given below.)
	We say that a contraction condition holds for $(\Lambda, \ty, i, j)$ with coefficient $\phi_{i,j} \in [0,\infty)$ if there exists a coupling kernel $K_{i,j}$ associated with $(\Lambda,\ty,i,j)$ such that
	\begin{equation} \label{ine:contraction}
	\int_{\X_j \times \X_j} d_{\Lambda,\ty,j}(x'', x''') \, K_{i,j} \left((x,x'), \df (x'',x''') \right) \leq \phi_{i,j} \, d_{\Lambda,\ty,i}(x,x')
	\end{equation}
	for $\pi_{\{i\} \mid \Lambda}(\cdot \mid \ty)$-almost every $x,x' \in \X_i$.
	Note that~\eqref{ine:contraction} implies that the Wasserstein divergence induced by $d_{\Lambda,\ty,j}$ between $\Pi^j_{\Lambda,\ty,i,x}$ and $\Pi^j_{\Lambda,\ty,i,x'}$ is upper bounded by $\phi_{i,j} \, d_{\Lambda,\ty,i}(x,x')$.
	In particular, if $d_{\Lambda,\ty,i}$ and $d_{\Lambda,\ty,j}$ are the discrete metric, then~\eqref{ine:contraction} is equivalent to contraction in the total variation distance, i.e.,
	\begin{equation} \nonumber
		d_{\scriptsize\mbox{TV}}(\Pi^j_{\Lambda,\ty,i,x}, \Pi^j_{\Lambda,\ty,i,x'}) \leq \phi_{i,j} \ind_{\tx \neq \tx'}.
	\end{equation}
	
	An influence matrix associated with $(\Lambda,\ty)$, denoted by $\Phi(\Lambda,\ty)$, is a square matrix of dimension $n - |\Lambda|$ whose $i,j$th element (where $i \neq j$) is the contraction coefficient $\phi_{i,j}$ given above, assuming that a contraction condition holds for $(\Lambda,\ty,i,j)$.
	The diagonal elements of $\Phi(\Lambda,\ty)$ are set to be zero.
	Let $r(\Phi(\Lambda,\ty))$ be the spectral radius of $\Phi(\Lambda,\ty)$.
	
	Now, allow~$\Lambda$ and~$\ty$ to vary.
	Given $l \in \{1,\dots,n-1\}$ and $(\eta_{l+1}, \dots, \eta_n) \in \mathbb{R}^{n-l}$ such that $\eta_m < m-1$ for each~$m$, we say that the full joint distribution~$\Pi$ is $(\eta_{l+1}, \dots, \eta_n)$-spectrally independent if the following holds: 
	For every $m \in \{l+1,\dots,n\}$, $\Lambda \subset [n]$ such that $|\Lambda| = n-m$, and $\ty \in \X_{\Lambda}$ such that $\pi_{\Lambda}(\ty) > 0$, there exists an influence matrix $\Phi(\Lambda,\ty)$ associated with $(\Lambda,\ty)$ such that 
	$
	r(\Phi(\Lambda,\ty)) \leq \eta_m.
	$
	
	Recently, spectral independence has received tremendous attention in the theoretical computer science community.
	It is regarded as a potentially powerful tool for bounding the spectral gaps of Gibbs chains.
	All existing works on this topic focus on chains on finite state spaces.
	Moreover, the distance-like function $d_{\Lambda,\ty,i}$ is always set to be the discrete metric.
	Our next corollary extends existing results, particularly \pcite{feng2021rapid} Theorem 3.1, with regard to these two aspects.

	\begin{corollary} \label{cor:specind}
		Let $m \in \{2,\dots,n\}$.
		Suppose that, for $\Lambda \subset [n]$ such that $n - |\Lambda| = m$ and $\ty \in \X_{\Lambda}$ such that $\pi_{\Lambda}(\ty) > 0$, there is an influence matrix $\Phi(\Lambda,\ty)$ associated with $(\Lambda,\ty)$ such that 
		\[
		r(\Phi(\Lambda,\ty)) \leq \eta,
		\]
		where $\eta < m-1$.
		Then
		\[
		\mbox{Gap}(m,m-1) \geq \frac{m-1}{m} - \frac{\eta}{m}.
		\]
		In particular, it follows from Theorem~\ref{thm:gap} that, for $l \in \{1,\dots,n-1\}$, if~$\Pi$ is $(\eta_{l+1},\dots,\eta_n)$-spectrally independent, then
		\[
		\mbox{Gap}(n,l) \geq \prod_{k=l+1}^n \left( \frac{k-1}{k} - \frac{\eta_k}{k} \right) = \frac{l}{n} \prod_{k=l+1}^n \left( 1 - \frac{\eta_k}{k-1} \right) .
		\]
	\end{corollary}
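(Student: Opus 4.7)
The product-form bound $\mbox{Gap}(n,l) \geq (l/n)\prod_{k=l+1}^n(1-\eta_k/(k-1))$ follows immediately by combining the single-step bound $\mbox{Gap}(m,m-1) \geq (m-1-\eta_m)/m$ with the spectral telescope (Theorem~\ref{thm:gap}), using the identity $(k-1-\eta_k)/k = \frac{k-1}{k}(1-\eta_k/(k-1))$ and the elementary telescope $\prod_{k=l+1}^n(k-1)/k=l/n$. The substance of the proof is therefore concentrated in the single-step bound.

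The plan is to route the argument through Corollary~\ref{cor:randomwalk}, which reduces the single-step bound to showing $g(\Lambda,\ty) \geq (m-1-\eta)/m$ for the random walk of Algorithm~\ref{alg:randomwalk}, for every admissible $(\Lambda,\ty)$ with $|\Lambda|=n-m$ and $\pi_\Lambda(\ty)>0$. Let $R$ be the Markov operator of that walk on $L^2_0(\varphi_{\Lambda,\ty})$, and decompose $R = \frac{1}{m}I + \frac{m-1}{m}Q$, where $Q$ is the non-lazy version that always moves to a new coordinate. Spectral mapping makes the target $g(\Lambda,\ty)\geq(m-1-\eta)/m$ equivalent to showing that $Q$'s top eigenvalue on $L^2_0$ is at most $\eta/(m-1)$.

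To bound this top eigenvalue, I would exploit the coupling hypothesis~(\ref{ine:contraction}). On a sufficiently rich dense class of test functions $f$, write $f_i(x)=f(i,x)$ and let $L_i(f)$ denote the Lipschitz-like seminorm of $f_i$ with respect to $d_{\Lambda,\ty,i}$; assemble these into $L(f)=(L_1(f),\dots,L_m(f))^\top$. Feeding the coupling kernels from~(\ref{ine:contraction}) into the integral representation of $Qf$ yields a coordinatewise majorization $L(Qf) \leq (m-1)^{-1}\Phi(\Lambda,\ty)^\top L(f)$. Iterating $k$ times and invoking Gelfand's formula together with $r(\Phi(\Lambda,\ty))\leq\eta$ delivers the asymptotic decay $\|L(Q^kf)\|_\infty \lesssim (\eta/(m-1))^k \|L(f)\|_\infty$. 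Hypothesis~(H1) then provides the $L^2$-integrability needed to upgrade this Lipschitz-level decay to an $L^2$ contraction $\|Q^kf\|_{L^2_0}\lesssim(\eta/(m-1))^k\|f\|_{L^2_0}$, and self-adjointness of $Q$ combined with Gelfand's formula applied to its $L^2_0$ operator norm converts the asymptotic constant into the desired one-step bound $\|Q\|_{L^2_0}\leq\eta/(m-1)$.

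The main obstacle is the Lipschitz-to-$L^2$ transfer in this general, Wasserstein-based setting: in the finite/discrete-metric regime it is essentially automatic, but for general distance-like $d_{\Lambda,\ty,i}$ on arbitrary measurable spaces one must choose the dense class of test functions carefully, rely on (H1) for uniform $L^2$ control over that class, and use (H2) to connect the distance-like structure to the total-variation topology inherent in the coupling kernels. Verifying that the coupling kernels guaranteed by~(\ref{ine:contraction}) can be assembled measurably, and that the Lipschitz-level contraction captures the full $L^2$ spectral behavior of $Q$ on $L^2_0$, forms the technical core of the proof; the rest is spectral bookkeeping.
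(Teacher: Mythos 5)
Your high-level route---through Corollary~\ref{cor:randomwalk}, with a lazy/non-lazy split of the random walk kernel and a Gelfand-type argument propagated by the contraction coefficients---matches the paper's plan. Your $Q$ is a minor variant of the paper's altered walk $\RWNL$ (which, when the same index is redrawn, refreshes the current coordinate from its marginal rather than skipping it, but agrees with $\frac{m-1}{m}Q$ on the subspace $\Range'$ where it matters), and your coordinatewise Lipschitz majorization $L(Q^k f) \lesssim (\eta/(m-1))^k$ is the functional dual of the paper's Wasserstein estimate in Lemma~\ref{lem:wasserstein}.

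The gap is the Lipschitz-to-$L^2$ transfer, and it is a genuine one that (H1) plus Gelfand's formula cannot close as stated. Gelfand governs $\|Q^k\|^{1/k}$, which is a supremum over all unit-norm $f \in L_0^2(\varphi)$; the decay you establish, $\|Q^k f\| \leq C_f \rho^k$, carries a constant $C_f$ tied to the Lipschitz seminorm of $f$, which is not comparable to $\|f\|_{\varphi}$. Decay on a dense Lipschitz class therefore does not control the operator norm: it is consistent with $\|Q\|_{\varphi}$ being anything up to $1$. The paper closes the gap with two specific tools your sketch does not name. First, a one-shot coupling step (Lemma~\ref{lem:tv}), made possible by (H2), upgrades the Wasserstein rate to a total-variation rate: after $t-1$ coupled steps the two chains are close in the distance-like metric, and one more step then makes them coincide exactly with high probability. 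Second, Theorem~2.1 of Roberts and Tweedie (1997) converts a geometric TV rate, holding for every initial law in $L_*^2(\varphi)$, into a geometric $L^2$ rate and hence into the operator-norm bound $\|\RWNL\|_{\varphi} \leq \rho$; this is also the content of the paper's Lemma~\ref{lem:roberts}. It is the positivity structure of probability densities---not the mere density of a nice test class---that licenses the passage to the full operator norm. Your brief mention of the "total-variation topology" gestures in this direction, but the one-shot coupling and the Roberts--Tweedie TV-to-$L^2$ transfer are precisely the missing technical content.
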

	
%	\begin{corollary} \label{cor:specind}
%		Let $l \in \{1,\dots,n-1\}$.
%		Suppose that each of the following conditions holds:
%		\begin{enumerate}
%			\item [(A1)] $\Pi$ is $(\eta_{l+1}, \dots, \eta_n)$- spectrally independent, where $\eta_m < m-1$ for $m \in \{l+1,\dots,n\}$.
%			\item [(A2)] For each $\Lambda \subset [n]$ such that $0 \leq |\Lambda| \leq n-l-1$, $\pi_{\Lambda}$-almost every $\ty \in \X_{\Lambda}$, and $i,j \in - \Lambda$ such that $i \neq j$, there exists $k < \infty$ such that, for $\pi_{\{i\} \mid \Lambda}(\cdot \mid \ty)$-almost every $x, x' \in \X_i$, 
%			\[
%			d_{\scriptsize\mbox{TV}}(\Pi^j_{\Lambda,\ty,i,x}, \Pi^j_{\Lambda,\ty,i,x'}) \leq k d_{\Lambda,\ty,i}(x,x').
%			\]
%			\item [(A3)] For each $\Lambda \subset [n]$ such that $0 \leq |\Lambda| \leq n-l-1$, $\pi_{\Lambda}$-almost every $\ty \in \X_{\Lambda}$, and $i \in - \Lambda$,
%			\[
%			\int_{\X_i} \left[ \int_{\X_i} d_{\Lambda,\ty,i}(x,x') \, \pi_{\{i\} \mid \Lambda} (\df x \mid \ty) \right]^2 \pi_{\{i\} \mid \Lambda} (\df x' \mid \ty) < \infty.
%			\]
%		\end{enumerate}
%		Then,
%		\[
%		\mbox{Gap}(n,l) \geq \prod_{m=l+1}^n \left( \frac{m-1}{m} - \frac{\eta_m}{m} \right) = \frac{l}{n} \prod_{m=l+1}^n \left( 1 - \frac{\eta_m}{m-1} \right) .
%		\]
%	\end{corollary}
%	
%	\begin{remark}
%		If $d_{\Lambda,\ty,i}$ is the discrete metric for each~$i$, then (A2) and (A3) automatically hold.
%	\end{remark}

	As will be seen from Section~\ref{sec:derivation}, Corollary~\ref{cor:specind} is derived from Corollary~\ref{cor:randomwalk}, which is in turn derived from Corollary~\ref{cor:corr}, which is in turn derived from Theorem~\ref{thm:gap}.
	
	\subsection{Additional remarks}
	
 We observe the lower bounds on $\mbox{Gap}(n,l)$ in Corollaries~\ref{cor:corr} and~\ref{cor:specind} are at most $l/n$.
The quantity	$l/n$ is in fact an upper bound on the spectral gap of the random-scan Gibbs sampler targeting~$\pi$ with block size~$l$.
	To see this, let~$K$ be the Markov operator associated with the algorithm.
	Then, for $f \in L_0^2(\Pi)$ and $\tx \in \X$,
	\[
	Kf( \tx ) = \frac{1}{{n \choose l}} \sum_{\stackrel{\Gamma \subset [n]}{|\Gamma| = l}} \E \left[ f(\tX) \mid \tX_{- \Gamma} = \tx_{- \Gamma} \right].
	\]
	Let $f \in L_0^2(\Pi)$ be such that $\|f\|_{\Pi} = \E[f(\tX)^2] = 1$.
	Suppose that $f(\tx)$ depends on $\tx \in \X$ only through $\tx_{\{1\}}$.
	One can verify that, whenever $n \geq l+1$,
	\[
	\begin{aligned}
		\langle f, K f \rangle_{\Pi} &= \frac{1}{{n \choose l}} \sum_{\stackrel{\Gamma \subset [n]}{|\Gamma| = l}} \E \left\{ \E \left[ f(\tX) \mid \tX_{- \Gamma} \right]^2 \right\} \\
		&\geq  \frac{1}{{n \choose l}} \sum_{\stackrel{\Gamma \subset [n]}{|\Gamma| = l, \; 1 \not\in \Gamma}} \E \left\{ \E \left[ f(\tX) \mid \tX_{- \Gamma}  \right]^2 \right\} \\
		&= \frac{1}{{n \choose l}} \sum_{\stackrel{\Gamma \subset [n]}{|\Gamma| = l, \; 1 \not\in \Gamma}} \E[f(\tX)^2] \\
		&= \frac{n-l}{n}.
	\end{aligned}
	\]
	Then the spectral gap satisfies
	\[
	1 - \|K\|_{\Pi} \leq 1 - \langle f, K f \rangle_{\Pi} \leq \frac{l}{n}.
	\]
	
	Our framework leaves several interesting open questions and directions for further extension. It is unclear when the lower bound in Theorem \ref{thm:gap} will give the exact spectral gap. 
	Moreover, the existing bound relies on uniform lower bounds on the spectral gap of lower-dimensional Gibbs samplers.  Generalizing  existing results to position-dependent lower bounds may increase the applicability of our method. 
	
	Perhaps more importantly, in many models,~$\Pi$ does not have a Radon-Nikodym derivative~$\pi$.
	It seems that many of our results could still hold if we replace the existence of~$\pi$ with some weaker regularity conditions.
	However, establishing this rigorously would likely require extremely careful (and possibly tedious) analysis.
	This is an important topic for future studies.

	\section{An Example}\label{sec:example}
	
	The relations derived in Section~\ref{sec:main} can be used to construct convergence bounds for Gibbs algorithms.
	The following example illustrates the strengths and limitations of this framework.

	Let $\X_1 = \dots = \X_n = (0,1)$, and let $\mu_1 = \dots = \mu_n$ be Lebesgue measures.
	Let
	\[
	\pi(x_1,\dots,x_n) \propto \begin{cases}
		1 & \sum_{i=1}^n x_i < 1, \\
		0 & \text{otherwise}.
	\end{cases}
	\]
	That is,~$\pi$ corresponds to the uniform distribution on the corner of an $n$-cube given by
	\[
	\left\{ (x_1,\dots,x_n) \in (0,1)^n: \; \sum_{i=1}^n x_i < 1 \right\}.
	\]
	Consider the random-scan Gibbs algorithm targeting~$\pi$ with block size $l = 1$.
	In each iteration of the algorithm, given the current state $\tx = (x_1,\dots,x_n) \in \X= (0,1)^n$, where $\sum_{i=1}^n x_i < 1$, one randomly and uniformly selects $i \in [n]$, then updates the value of $\tx_{\{i\}} = x_i$ by drawing from the density
	\[
		\pi_{\{i\} \mid - \{i\}} (x \mid \tx_{- \{i\}}) = \frac{1}{1 - \sum_{j \in - \{i\}} x_j}, \quad x < 1 - \sum_{j \in - \{i\}} x_j.
	\]
	We will use Corollary~\ref{cor:corr} to construct a sharp lower bound on the spectral gap of this chain.
	We then briefly illustrates how Corollary~\ref{cor:specind} can be used to construct a similar but looser bound.
%	We will use two methods to place a lower bound on the spectral gap of this chain.
%	The first method is through Corollary~\ref{cor:corr}, and the second, Corollary~\ref{cor:specind}.
	
	\subsection{A spectral gap bound based on correlation coefficients}

	We will prove the following result for the chain in question.
	
	\begin{proposition} \label{pro:example-corr}
		Let $m \in \{2,\dots,n\}$.
		Let $\Lambda \subset [n]$ be such that $|\Lambda| = n - m$, and let $\tx = (x_1,\dots,x_n) \in \X$.
		Assume that $\sum_{i \in \Lambda} x_i < 1$.
		Then 
		\[
		s(\Lambda,\tx_{\Lambda}) \leq \begin{cases}
			3/4 & m = 2, \\
			1/m + 2(m-1)/[(m+1)m^2] & m \geq 3.
		\end{cases}
		\]
	\end{proposition}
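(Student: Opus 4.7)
The plan is to reduce the problem to a canonical uniform distribution via rescaling, then exploit exchangeability through a spectral decomposition of a one-variable conditional-expectation operator, and finally compute its eigenvalues explicitly.

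For the reduction, set $c = 1 - \sum_{i \in \Lambda} x_i > 0$. The conditional distribution $\pi_{-\Lambda \mid \Lambda}(\cdot \mid \tx_\Lambda)$ is uniform on $\{z : z_j > 0,\ \sum_j z_j < c\}$. The change of variables $Y_j = Z_j/c$ maps this onto the uniform distribution on $S_m := \{y \in (0,1)^m : \sum_j y_j < 1\}$, and since $s_*$ is invariant under such coordinate rescalings (the factor $c$ is absorbed into each test function $f_i$), we have $s(\Lambda,\tx_\Lambda) = s_*(Y_1,\dots,Y_m)$, depending only on $m$.

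For the spectral decomposition, each $Y_j$ has marginal $\varpi = \mathrm{Beta}(1,m)$; let $T$ on $L^2(\varpi)$ be defined by $(Tf)(y) = \E[f(Y_2) \mid Y_1 = y]$. Exchangeability of $(Y_1,\dots,Y_m)$ ensures that $T$ is the same operator for every pair and is self-adjoint. The substitution $u = y_2/(1-y_1)$ in the explicit conditional density reduces the action on monomials to a Beta integral, giving
\[
T(y^k)(y_1) = \frac{k!}{(m)_k}(1-y_1)^k, \qquad (m)_k := m(m+1)\cdots(m+k-1),
\]
so $T$ is upper triangular in the monomial basis with diagonal entries $\lambda_k = (-1)^k/\binom{m+k-1}{k}$. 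Since polynomials are dense in $L^2(\varpi)$, Gram--Schmidt yields an orthonormal polynomial basis $\{p_k\}_{k\geq 0}$, and the $T$-invariance of each $\mathrm{span}\{1,y,\dots,y^k\}$ combined with self-adjointness forces $T p_k = \lambda_k p_k$. Expanding $f_i = \sum_{k\geq 1} c_{i,k} p_k$ yields
\[
\E\Bigl[\Bigl(\sum_{i=1}^m f_i(Y_i)\Bigr)^2\Bigr] = \sum_{k \geq 1}\bigl[(1-\lambda_k)A_k + \lambda_k B_k\bigr],
\]
with $A_k = \sum_i c_{i,k}^2$ and $B_k = \bigl(\sum_i c_{i,k}\bigr)^2 \in [0, m A_k]$. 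Optimising term-by-term (push $B_k$ to $m A_k$ when $\lambda_k \geq 0$ and to $0$ when $\lambda_k < 0$) produces
\[
s_*(Y_1,\dots,Y_m) \leq \frac{1}{m}\Bigl(1 + \max_{k \geq 1}\max\bigl((m-1)\lambda_k,\ -\lambda_k\bigr)\Bigr).
\]

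To finish, $|\lambda_k| = 1/\binom{m+k-1}{k}$ is strictly decreasing in $k$, so the largest positive eigenvalue is $\lambda_2 = 2/[m(m+1)]$ and the most negative is $\lambda_1 = -1/m$; the inner maximum thus equals $\max\bigl(2(m-1)/[m(m+1)],\ 1/m\bigr)$. A one-line comparison shows that $-\lambda_1$ dominates when $m = 2$ (yielding $3/4$) while $(m-1)\lambda_2$ dominates when $m \geq 3$ (yielding $1/m + 2(m-1)/[m^2(m+1)]$), matching the claimed bounds. The main obstacle is the eigenvalue computation of $T$ together with the justification that the monomial-basis diagonal exhausts the spectrum; the spectral reduction itself is clean once the exchangeable structure is brought to bear.
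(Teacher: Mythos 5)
Your proposal is correct and follows essentially the same route as the paper: compute the conditional-expectation operator's action on monomials to find the diagonal entries $\zeta_k = (-1)^k k!(m-1)!/(m+k-1)!$ (your $\lambda_k$), use self-adjointness and the invariance of polynomial degree filtration to show the orthonormal polynomials are eigenfunctions, expand each $f_i$ in this basis, and reduce term-by-term to the extreme eigenvalue of $I + \zeta_k(J-I)$, namely $\max\{1-\zeta_k,\,1+(m-1)\zeta_k\}$, then optimize over $k$ to land on $k=1$ (negative) versus $k=2$ (positive). The only cosmetic differences from the paper are your explicit rescaling to the standard simplex (the paper carries the normalizing constant through the conditional densities) and your use of exchangeability to work with a single operator $T$ rather than the pairwise family $P_{i,j}$; neither changes the substance of the argument.
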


	By Corollary~\ref{cor:corr}, when $n \geq 3$, the spectral gap satisfies
	\[
	\mbox{Gap}(n,1) \geq \frac{1}{4} \prod_{m=3}^n \left[ 1 - \frac{1}{m} - \frac{2(m-1)}{(m+1)m^2} \right] .
	\]
	Note that $1/m + 2(m-1)/[(m+1) m^2] \leq 1/(m-2)$.
	Thus, if $n \geq 4$, then
	\[
	\mbox{Gap}(n,1) \geq \frac{5}{36} \prod_{m=4}^n \frac{m-3}{m-2} = \frac{5}{36(n-2)}.
	\]
	Recall that the spectral gap is upper bounded by $1/n$.
	Thus, the bound here gives the correct order as $n \to \infty$.
	
	To prove Proposition~\ref{pro:example-corr}, fix $m \in \{2,\dots,n\}$, $\Lambda \subset [n]$ such that $|\Lambda| = n - m$, and $\tx = (x_1,\dots,x_n) \in \X$.
	Suppose that $\sum_{i \in \Lambda} x_i < 1$.
	Without loss of generality, assume that $- \Lambda = \{1,\dots,m\}$.
	Let $Y_1,\dots,Y_m$ be distributed as $\pi_{- \Lambda \mid \Lambda}(\cdot \mid \tx_{\Lambda})$.
	For $i = 1,\dots,m$, let $f_i \in L_0^2(\varpi_i)$, where $\varpi_i$ denotes the distribution given by the density 
	\begin{equation} \label{eq:example-pii}
	\pi_{\{i\} \mid \Lambda}(x \mid \tx_{\Lambda}) = \frac{m \left( 1 - \sum_{j=m+1}^n x_j - x \right)^{m-1} }{ \left( 1 - \sum_{j=m+1}^n x_j \right)^m }, \quad x < 1 - \sum_{j=m+1}^n x_j.
	\end{equation}
	It suffices to prove that
	\begin{equation} \label{eq:example-corr}
	\E \left\{ \left[ \sum_{i=1}^m f_i(Y_i) \right]^2 \right\} \leq A_m \sum_{i=1}^m \E \left[f_i(Y_i)^2 \right],
	\end{equation}
	where
	\begin{equation} \label{eq:Am}
	A_m = \begin{cases}
		3/2 & m = 2, \\
		1 + 2(m-1)/[(m+1)m] & m \geq 3.
	\end{cases}
	\end{equation}
	We will prove this using orthogonal polynomials.
	The techniques we employ are similar to those in \cite{diaconis2008gibbs}.

%	Since
%	\begin{equation} \label{eq:esumfi}
%	\E \left\{ \left[ \sum_{i=1}^m f_i(Y_i) \right]^2 \right\} = \sum_{i=1}^m \sum_{j=1}^m \E \left[ f_i(Y_i) f_j(Y_j) \right] ,
%	\end{equation}
%	we need to study the correlation between $X_i$ and $X_j$.
	Let $i,j \in - \Lambda$ be such that $i \neq j$.
	Then, for $x \in \X_i$ such that $\pi_{\{i\} \mid \Lambda}(x \mid \tx_{\Lambda}) > 0$ and $x' \in \X_j$, the conditional density of $Y_j$ given $Y_i = x$ is
	\begin{equation} \label{eq:example-piji}
	\pi_{\{j\} \mid \Lambda \cup \{i\}}(x' \mid x, \tx_{\Lambda}) = \frac{(m-1) \left( 1 - \sum_{a=m+1}^n x_a - x - x' \right)^{m-2}}{\left( 1 - \sum_{a=m+1}^n x_a - x \right)^{m-1} }, \quad x' < 1 - \sum_{a=m+1}^n x_a - x,
	\end{equation}
	where $(x,\tx_{\Lambda}) = (x, x_{m+1}, \dots, x_n)$.
	For $f \in L_0^2(\varpi_j)$, let $P_{i,j} f$ be a function on $\X_i$ such that
	\[
	P_{i,j} f(x) = \int_{\X_j} f(x') \pi_{\{j\} \mid \Lambda \cup \{i\}}(x' \mid x, \tx_{\Lambda}) \, \df x', \quad x \in \X_i.
	\]
	Using Cauchy-Schwarz, it is easy to show that $P_{i,j} f \in L_0^2(\varpi_i)$.
	In fact, $P_{i,j}: L_0^2(\varpi_j) \to L_0^2(\varpi_i)$ is a bounded linear transformation.
	Let $P_{i,i}$ be the identity on $L_0^2(\varpi_i)$.
	Then, for $i,j \in - \Lambda$,
	\begin{equation} \label{eq:pij-adjoint}
	\langle f_i, P_{i,j} f_j \rangle_{\varpi_i} = \langle P_{j,i} f_i, f_j \rangle_{\varpi_j} = \E \left[ f_i(Y_i) f_j(Y_j) \right] . 
	\end{equation}
	It follows that
	\begin{equation} \label{eq:esumfi-innerproduct}
	\begin{aligned}
		&\sum_{i=1}^m \E \left[f_i(Y_i)^2 \right] = \sum_{i=1}^m \langle f_i, P_{i,i} f_i \rangle_{\varpi_i} = \sum_{i=1}^m \|f_i\|_{\varpi_i}^2, \\
		&\E \left\{ \left[ \sum_{i=1}^m f_i(Y_i) \right]^2 \right\} = \sum_{i=1}^m \sum_{j=1}^m \langle f_i, P_{i,j} f_j \rangle_{\varpi_i}.
	\end{aligned}
	\end{equation}
	
	Now, for a positive integer~$k$ and $i,j \in - \Lambda$ such that $i \neq j$, the following holds if $\pi_{\{i\} \mid \Lambda}(x \mid \tx_{\Lambda}) > 0$:
	\begin{equation} \label{eq:xk}
	\int_{\X_j} x'^k \pi_{\{j\} \mid \Lambda \cup \{i\} }(x' \mid x, \tx_{\Lambda}) \, \df x = \zeta_k x^k + q_{k-1}(x),
	\end{equation}
	where
	\[
	\zeta_k = \frac{(-1)^k k!(m-1)!}{(m+k-1)!},
	\]
	and $q_{k-1}(x)$ is a polynomial of~$x$ whose degree is $k-1$.
	Standard arguments show that, for $i \in - \Lambda$, $L_0^2(\varpi_i)$ has an orthonormal basis $\{p_{i,k}\}_{k=1}^{\infty}$, where $p_{i,k}$ is a polynomial function of degree~$k$.
	By \eqref{eq:xk}, when $i \neq j$,
	\[
	\begin{aligned}
		P_{i,j} p_{j,k} = \zeta_k p_{i,k} + r_{i,j,k-1},
	\end{aligned}
	\]
	where $r_{i,j,k-1}$ is in the span of $\{p_{i,1},\dots,p_{i,k-1}\}$.
	Claim: for $k \geq 1$ and $i \neq j$, $r_{i,j,k} = 0$.
	This can be proved through induction.
	The claim holds for $k=1$, because the only polynomial of order~0 in $L_0^2(\varpi_i)$ is~0.
	Assume that it holds for $k= k'-1 \geq 1$.
	Then, by~\eqref{eq:pij-adjoint} and the fact that $\{p_k\}$ is an orthonormal basis, for $k = 1,\dots,k'$ and $i \neq j$,
	\[
	\begin{aligned}
		\langle r_{i,j,k'}, p_{i,k} \rangle_{\varpi_i} &= \langle P_{i,j} p_{j,k'+1} - \zeta_{k'+1} p_{i,k'+1}, p_{i,k} \rangle_{\varpi_i} \\
		&= \langle P_{i,j} p_{j,k'+1} , p_{i,k} \rangle_{\varpi_i} \\
		&= \langle p_{j,k'+1} , P_{j,i} p_{i,k} \rangle_{\varpi_j} \\
		&= \zeta_k \langle p_{j,k'+1} , p_{j,k} \rangle_{\varpi_j} \\
		&= 0.
	\end{aligned}
	\]
	This implies that $r_{i,j,k'} = 0$.
	Thus, for $k \geq 1$ and $i \neq j$, 
	\[
	P_{i,j} p_{j,k} = \zeta_k p_{i,k}.
	\]
	
	For $i \in - \Lambda$, we can decompose $f_i \in L_0^2(\varpi_i)$ into $f_i = \sum_{k=1}^{\infty} a_{i,k} p_{i,k}$.
	Then~\eqref{eq:esumfi-innerproduct} can be written as
	\[
	\begin{aligned}
		&\sum_{i=1}^m \E \left[f_i(Y_i)^2 \right] = \sum_{i=1}^m \sum_{k=1}^{\infty} a_{i,k}^2, \\
		&\E \left\{ \left[ \sum_{i=1}^m f_i(Y_i) \right]^2 \right\} = \sum_{i=1}^m \sum_{j=1}^m \sum_{k=1}^{\infty} a_{i,k} a_{j,k} [\ind_{i=j} + \ind_{i \neq j} \zeta_k].
	\end{aligned}
	\]
%	Note that the order of summation can be exchanged.
	Elementary matrix algebra shows that, given a positive integer~$k$,
	\[
	\sum_{i=1}^m \sum_{j=1}^m a_{i,k} a_{j,k} [\ind_{i=j} + \ind_{i \neq j} \zeta_k] \leq \max\{ 1-\zeta_k, 1 + (m-1)\zeta_k \} \sum_{i=1}^m a_{i,k}^2.
	\]
	It follows that
	\[
	\begin{aligned}
		\sum_{i=1}^m \sum_{j=1}^m \sum_{k=1}^{\infty} a_{i,k} a_{j,k} [\ind_{i=j} + \ind_{i \neq j} \zeta_k] &\leq \left( \sup_{k} \max\{ 1-\zeta_k, 1 + (m-1)\zeta_k \} \right) \sum_{k=1}^{\infty}  \sum_{i=1}^m a_{i,k}^2 \\
		&= \left[ \max\{ 1 - \zeta_1, 1 + (m-1) \zeta_2 \}  \right] \sum_{k=1}^{\infty} \sum_{i=1}^m a_{i,k}^2 \\
		& = A_m \sum_{k=1}^{\infty} \sum_{i=1}^m a_{i,k}^2,
	\end{aligned}
	\]
	where $A_m$ is given in~\eqref{eq:Am}.
	This establishes~\eqref{eq:example-corr}, and in turn, Proposition~\ref{pro:example-corr}.

	\subsection{A spectral gap bound based on influence matrices}
	
	One can also use Corollary~\ref{cor:specind} to bound the spectral gap.
	However, the bound would be looser than the one obtained in the previous subsection.
	Hence, we will not present the full calculation for this alternative bound.
	Instead, we only present parts of the calculation to illustrate how influence matrices are computed.
	
	We will establish the following result for our example.
	
	\begin{proposition} \label{pro:example-specind}
		Assume that $n \geq 4$.
		Let $m \in \{4,\dots,n\}$.
		Then, for $\Lambda \subset [n]$ such that $|\Lambda| = n - m$ and $\ty \in \X_{\Lambda}$ such that $\pi_{\Lambda}(\ty) > 0$, there is an influence matrix $\Phi(\Lambda,\ty)$ associated with $(\Lambda,\ty)$ such that $r(\Phi(\Lambda,\ty)) = (m-1)/(m-2)$.
	\end{proposition}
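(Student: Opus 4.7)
My plan is to take $d_{\Lambda,\ty,i}(x,x') = |x-x'|$ on every active coordinate $i \in -\Lambda$, exhibit an explicit scaling coupling for the one-coordinate conditionals, and then use the permutation symmetry of the target to reduce $r(\Phi(\Lambda,\ty))$ to the spectral radius of a single scaled matrix. After relabelling so that $-\Lambda = \{1,\dots,m\}$, set $s := 1 - \sum_{i \in \Lambda} y_i > 0$; the conditional $\pi_{-\Lambda\mid\Lambda}(\cdot\mid\ty)$ is then uniform on $\{\tz \in (0,s)^m : \sum_i z_i < s\}$, and for $i \neq j$ the conditional density of $X_j$ given $X_i = x$ is $p_x(y) = (m-1)(s-x-y)^{m-2}/(s-x)^{m-1}$ on $(0,s-x)$, exactly as in the preceding subsection.

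Before constructing the coupling I would verify (H1) and (H2). (H1) is immediate since $d_i \leq s$ on the support. For (H2), splitting $\int_0^{s-x}|p_x(y)-p_{x'}(y)|\,\df y$ at $\min(s-x,s-x')$ and differentiating the closed form $a \mapsto (a-y)^{m-2}/a^{m-1}$ in the scale parameter $a$ yields an explicit bound $d_{\scriptsize\mbox{TV}}(\Pi^j_{\Lambda,\ty,i,x},\Pi^j_{\Lambda,\ty,i,x'}) \leq k(s,m)\,|x-x'|$, so some finite $k$ depending only on $(\Lambda,\ty)$ and $m$ suffices.

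The core step is the coupling. For each ordered pair $(i,j)$ with $i \neq j$ in $-\Lambda$, I draw a single $V \sim \text{Beta}(1,m-1)$ and set $(Y^x,Y^{x'}) := ((s-x)V,\,(s-x')V)$. A direct change of variables shows $(s-a)V$ has density $p_a$, so the marginals are correct. Under this coupling,
\[
\int_{\X_j \times \X_j} d_{\Lambda,\ty,j}(y,y')\, K_{i,j}\bigl((x,x'),\df(y,y')\bigr) = |x-x'|\,\E[V] = \frac{|x-x'|}{m} \leq \frac{|x-x'|}{m-2},
\]
so the contraction condition~\eqref{ine:contraction} holds with $\phi_{i,j} := 1/(m-2)$. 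By the permutation symmetry among the active coordinates this choice is uniform in $(i,j)$, and the influence matrix may be taken as $\Phi(\Lambda,\ty) = (m-2)^{-1}(J_m - I_m)$, where $J_m$ is the $m \times m$ all-ones matrix.

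Finally, $J_m - I_m$ has eigenvalue $m-1$ on the span of $\mathbf{1}$ and eigenvalue $-1$ on its orthogonal complement, so $r(J_m - I_m) = m-1$ and hence $r(\Phi(\Lambda,\ty)) = (m-1)/(m-2)$. This quantity is strictly less than $m-1$ precisely when $m \geq 4$, which is why the proposition is stated in that range. The main subtlety is not the coupling itself (it in fact yields the sharper $1/m$; we weaken it to $1/(m-2)$ only to match the clean advertised constant and to land exactly at the applicability threshold of Corollary~\ref{cor:specind}), but rather the verification of (H2), which requires careful bookkeeping of the integrals of $(s-x-y)^{m-2}$ near both endpoints of the support.
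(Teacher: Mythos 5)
The scaling coupling is correct and is exactly the one the paper uses, but your choice of distance-like function $d_{\Lambda,\ty,i}(x,x')=|x-x'|$ does not satisfy condition (H2), and that is a genuine gap. Write $s = 1-\sum_{j\in\Lambda}y_j$, $u = s-x$, $u'=s-x'$. The two conditionals $\Pi^j_{\Lambda,\ty,i,x}$ and $\Pi^j_{\Lambda,\ty,i,x'}$ are a fixed reference law (a scaled Beta) rescaled to the intervals $(0,u)$ and $(0,u')$; their total variation distance therefore depends only on the ratio $u/u'$ and is bounded away from zero whenever that ratio is bounded away from $1$. Sending $x,x'\to s^-$ with $u/u'$ fixed gives $|x-x'|=|u-u'|\to 0$ while $d_{\scriptsize\mbox{TV}}$ stays constant, so $d_{\scriptsize\mbox{TV}}/|x-x'|\to\infty$ on a set of positive $\pi_{\{i\}\mid\Lambda}(\cdot\mid\ty)$-measure. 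Your claim that ``differentiating in the scale parameter yields an explicit bound $d_{\scriptsize\mbox{TV}}\le k(s,m)|x-x'|$'' cannot be made to work uniformly in $x,x'$, and the closed-form TV expression in the paper (which behaves like $(\alpha\xi^{\alpha-1})^{-(m-2)}$ with $\xi\in(u',u)$, $\alpha=(m-1)/(m-2)$) makes the divergence explicit.

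This is precisely why the paper normalizes the distance to $d_{\Lambda,\ty,i}(x,x')=|x-x'|/(s-x\vee x')$: the normalization restores scale invariance and makes (H2) hold. Once you adopt that distance, the same scaling coupling must be evaluated in it, and the contraction coefficient becomes $\E[V/(1-V)] = 1/(m-2)$ rather than $\E[V]=1/m$. In other words, the $1/(m-2)$ is not an artificial weakening ``to match the clean advertised constant''; it is the actual Wasserstein contraction rate in the only metric under which your influence matrix is legitimately defined. The appearance of the constraint $m\ge 4$ from $r(\Phi)<m-1$ is a consequence of this, not a coincidence; under your (invalid) $1/m$ the bound would hold for all $m\ge 2$, which should itself have flagged that something was too good to be true. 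Verification of (H1) also needs to be rechecked in the normalized metric (it holds, since $d_{\Lambda,\ty,i}(x,x')\le (u'/u)\wedge(u/u')$ type bounds keep the integral finite, but the one-line ``$d_i\le s$'' argument you gave no longer applies).
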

	
	By Corollary~\ref{cor:specind}, Proposition~\ref{pro:example-specind} implies that
	\[
	\mbox{Gap}(m,m-1) \geq \frac{m-1}{m} - \frac{(m-1)}{m(m-2)}
	\]
	for $m \geq 4$.
	If one can obtain a non-trivial lower bound $c > 0$ on $\mbox{Gap}(3,2)$ and $\mbox{Gap}(4,3)$ (which can be achieved through Corollary~\ref{cor:specind}, but requires some tedious calculations), then, by Theorem~\ref{thm:gap},
	\[
	\mbox{Gap}(n,1) \geq c^2 \prod_{m=4}^n \left[ \frac{m-1}{m} - \frac{(m-1)}{m(m-2)} \right] = \frac{3 c^2}{n(n-2)}.
	\]

	Let us now prove Proposition~\ref{pro:example-specind}.
	Assume that $n \geq 4$ and let $m \in \{4,\dots,n\}$.
	Fix $\Lambda \subset [n]$ such that $|\Lambda| = n - m$.
	Let $\ty \in \X_{\Lambda}$, and let $\tx = (x_1,\dots,x_n) \in \X$ be such that $\tx_{\Lambda} = \ty$.
	Assume that $\sum_{i \in \Lambda} x_i < 1$, so that $\pi_{\Lambda}(\ty) > 0$.
	Without loss of generality, assume that $- \Lambda = \{1,\dots,m\}$.
	
	For $i \in - \Lambda$, define a distance-like function 
	\[
	d_{\Lambda,\ty,i}(x,x') = \frac{|x-x'|}{1 - \sum_{j \in \Lambda} x_j - x \vee x'}, \quad x, x' \in \X_i,
	\]
	where $x \vee x' = \max\{x,x'\}$.
	We first need to verify that (H1) and (H2), which are given in Section~\ref{ssec:specind}, hold.
	Establishing (H1) through~\eqref{eq:example-pii} is rather straightforward.
	To establish (H2), recall that the total variation distance between two distributions equals half of the integration of the absolute difference of their density functions.
	Then, based on~\eqref{eq:example-piji}, one can find that, for $i,j \in - \Lambda$ such that $i \neq j$ and $x,x' < 1 - \sum_{a \in \Lambda} x_a$,
	\[
		\begin{aligned}
				&d_{\scriptsize\mbox{TV}}(\Pi^j_{\Lambda,\ty,i,x}, \Pi^j_{\Lambda,\ty,i,x'}) \\
				=& \frac{|x-x'|^{m-1}}{\left| \left( 1 - \sum_{a \in \Lambda} x_a - x \right)^{(m-1)/(m-2)} - \left( 1 - \sum_{a \in \Lambda} x_a - x' \right)^{(m-1)/(m-2)} \right|^{m-2}} \\
				\leq & \left( \frac{m-2}{m-1} \right)^{m-2} d_{\Lambda,\ty,i}(x,x').
			\end{aligned}
	\]
	This establishes (H2).
	
	It remains to establish a set of appropriate contraction conditions.
	For $i,j \in - \Lambda$ such that $i \neq j$, define a coupling kernel associated with $(\Lambda,\ty,i,j)$, denoted by $K_{i,j}$, as follows.
	Let~$X$ follow the distribution given by the density function
	\[
	x \mapsto (m-1) (1 - x)^{m-2}, \quad x \in (0,1).
	\]
	For $x,x' \in \X_i$, let $K_{i,j}((x,x'),\cdot)$ be the distribution of 
	\[
	\left( \left(1 - \sum_{a \in \Lambda} x_a - x \right)X, \left(1 - \sum_{a \in \Lambda} x_a - x' \right)X \right).
	\]
	One can verify that this is a valid coupling kernel.
	In particular, $K_{i,j}((x,x'),\cdot)$ is a coupling of $\Pi^j_{\Lambda,\ty,i,x}$ and $\Pi^j_{\Lambda,\ty,i,x'}$.
	Now, 
	\[
	\begin{aligned}
		&\int_{\X_j \times \X_j} \int_{\X_j \times \X_j} d_{\Lambda,\ty,j}(x'', x''') \, K_{i,j} \left((x,x'), \df (x'',x''') \right)  \\
		=& \E \left[ \frac{|x-x'| X}{ 1 - \sum_{a \in \Lambda} x_a - \left( 1 - \sum_{a \in \Lambda} x_a - x \wedge x' \right) X } \right] \\
		\leq & \frac{|x-x'|}{1 - \sum_{a \in \Lambda} x_a - x \wedge x' } \, \E \left( \frac{X}{1 - X} \right) \\
		\leq & \frac{d_{\Lambda,\ty,i}(x, x')}{m-2} ,
	\end{aligned}
	\]
	where $x \wedge x' = \min\{x,x'\}$.
	The above calculation shows that there exists an influence matrix $\Phi(\Lambda,\ty)$ associated with $(\Lambda,\ty)$ whose non-diagonal elements are $1/(m-2)$.
	Then $r(\Phi(\Lambda,\ty)) = (m-1)/(m-2)$.
	This proves Proposition~\ref{pro:example-specind}.
	
	If one instead use the discrete metric to construct the influence matrix $\Phi(\Lambda,\ty)$, then all the off-diagonal entries of $\Phi(\Lambda,\ty)$ would be~1.
	The spectral gap obtained through Corollary~\ref{cor:specind} would then be trivial.
	
	\subsection{Discussion}
	
	We see from this example that both Corollaries~\ref{cor:corr} and~\ref{cor:specind} are capable of giving reasonably sharp bounds on the spectral gap.
	However, to construct these bounds, we need sufficient information on $\pi_{\{j\} \mid \Lambda \cup \{i\}}$ for every $\Lambda \subset [n]$ such that $|\Lambda| \in \{0,\dots,n-2\}$ and $i,j \in - \Lambda$ such that $i \neq j$.
	For many practical problems, $\pi_{\{j\} \mid \Lambda \cup \{i\}}$ is intractable, especially when $\Lambda \cup \{i\} \cup \{j\} \neq [n]$.
	Indeed, even for chains on finite state spaces, spectral independence is often non-trivial to establish.
	A subject for future research would be to apply spectral telescope to analyze Gibbs chains used in various fields. Our results may be useful to study certain  physics models, similar to those studied in \cite{janvresse2001spectral}, \cite{carlen2003determination}, \cite{johnson2015geometric}, and \cite{pillai2018mixing}; and statistical models such as the de Finitti's priors for almost exchangeable data \citep{gerencser2019mixing,gerencser2020rates}.

	\section{Derivation of Main Results} \label{sec:derivation}

	\subsection{Hierarchical Structure of the Spectral Gap}
	
	In this subsection, we derive Theorem~\ref{thm:gap}.
	It suffices to prove the following lemma, which, as we will see, follows from the recursive representation of Algorithm~\ref{alg:blo-gibbs} given in Algorithm~\ref{alg:rec-gibbs}.
	\begin{lemma} \label{lem:induction}
		Let $l \in \{1,\dots,n-1\}$ and $m \in \{l+1,\dots,n\}$.
		Let $\Lambda \subset [n]$ be such that $|\Lambda| = n - m$, and let $\ty \in \X_{\Lambda}$ be such that $\pi_{\Lambda}(\ty) > 0$.
		Then
		\[
		\mbox{gap}  (\Lambda,\ty,l) \geq \mbox{Gap}  (m,m-1)  \mbox{Gap} (m-1,l).
		\]
	\end{lemma}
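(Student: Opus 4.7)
The plan is to exploit the recursive representation of Algorithm~\ref{alg:blo-gibbs} given by Algorithm~\ref{alg:rec-gibbs}. Write $\mu = \pi_{-\Lambda \mid \Lambda}(\cdot \mid \ty)$, and let $K$ denote the Markov operator of Algorithm~\ref{alg:blo-gibbs} on $L^2(\mu)$. For each $i \in -\Lambda$, let $M_i$ be the Markov operator of the sub-step ``apply one step of the Gibbs sampler on $\X_{-(\Lambda \cup \{i\})}$ with block size $l$, targeting $\mu^{(x_i)} := \pi_{-(\Lambda \cup \{i\}) \mid \Lambda \cup \{i\}}(\cdot \mid \ty, x_i)$'', lifted trivially to an operator on $L^2(\mu)$ by keeping $x_i$ fixed. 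Algorithm~\ref{alg:rec-gibbs} then gives the decomposition
\[
K = \frac{1}{m} \sum_{i \in -\Lambda} M_i.
\]
Let $E_i$ denote the orthogonal projection on $L^2(\mu)$ defined by $E_i f(\tx) = \E_\mu[f \mid X_i](x_i)$. A direct computation identifies $E := m^{-1}\sum_{i \in -\Lambda} E_i$ with the Markov operator of the Gibbs sampler at block size $m-1$ targeting $\mu$, so $\|E\|_\mu \leq 1 - \mbox{Gap}(m,m-1)$.

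The central observation is the commutation $E_i M_i = M_i E_i = E_i$. The first identity holds because $M_i$ does not alter $x_i$, hence fixes any function of $x_i$ alone; the second holds because, for $\pi_{\{i\} \mid \Lambda}(\cdot \mid \ty)$-almost every $x_i$, $M_i$ preserves $\mu^{(x_i)}$. Setting $R_i := M_i - E_i$, the commutation gives $R_i = (I - E_i) M_i (I - E_i)$, so $R_i$ is self-adjoint and positive semi-definite, and vanishes on $\mathrm{range}(E_i)$. On $\mathrm{ker}(E_i)$, $M_i$ acts fibre-wise as the Gibbs $(m-1,l)$ operator on $L_0^2(\mu^{(x_i)})$, whose $L^2$-norm is at most $1 - \mbox{Gap}(m-1,l)$ for $\pi_{\{i\}\mid \Lambda}(\cdot \mid \ty)$-a.e.~$x_i$. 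Integrating the fibre-wise bound against this marginal yields, for every $g \in L_0^2(\mu)$,
\[
\langle g, R_i g \rangle_\mu \leq (1 - \mbox{Gap}(m-1,l)) \bigl(\|g\|_\mu^2 - \|E_i g\|_\mu^2\bigr).
\]

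To conclude, write $K = E + R$ with $R = m^{-1} \sum_{i \in -\Lambda} R_i$. Using that $E_i$ is an orthogonal projection, $\|E_i g\|_\mu^2 = \langle g, E_i g \rangle_\mu$, and averaging across $i$ gives $m^{-1} \sum_i \|E_i g\|_\mu^2 = \langle g, E g \rangle_\mu$. Hence
\[
\langle g, R g \rangle_\mu \leq (1 - \mbox{Gap}(m-1,l)) \bigl(\|g\|_\mu^2 - \langle g, E g \rangle_\mu\bigr),
\]
and combining with the bound $0 \leq \langle g, E g \rangle_\mu \leq (1 - \mbox{Gap}(m,m-1))\|g\|_\mu^2$ telescopes to
\[
\langle g, K g \rangle_\mu \leq [\,1 - \mbox{Gap}(m,m-1)\,\mbox{Gap}(m-1,l)\,] \, \|g\|_\mu^2.
\]
Since $K$ is positive semi-definite, its $L^2$-norm equals the supremum of this Rayleigh quotient over nonzero $g \in L_0^2(\mu)$, whence $\mbox{gap}(\Lambda,\ty,l) \geq \mbox{Gap}(m,m-1)\,\mbox{Gap}(m-1,l)$.

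The main obstacle will be making the fibre-wise argument fully rigorous, specifically verifying that the bound $\|M_i\|_{\mu^{(x_i)}} \leq 1 - \mbox{Gap}(m-1,l)$ can be invoked measurably in $x_i$ and handling the $\pi_{\{i\} \mid \Lambda}(\cdot \mid \ty)$-null set on which $\pi_{\Lambda \cup \{i\}}(\ty,x_i) = 0$ and where Algorithm~\ref{alg:rec-gibbs} is defined by an arbitrary choice. Because that set has measure zero, none of the $L^2(\mu)$ identities above are affected, but a careful bookkeeping of conditional densities and of the fact that $\Pi$ admits a density with respect to the product base measure is needed to justify decomposing $\|g\|_\mu^2$ as an integral over fibres of $\|g(\cdot, x_i)\|_{\mu^{(x_i)}}^2$ against the marginal.
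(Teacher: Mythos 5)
Your proposal is correct and follows essentially the same route as the paper's proof: the recursive decomposition $K = m^{-1}\sum_{i} M_i$, the projections $E_i$ onto functions of $x_i$ alone, the fibre-wise application of the lower-dimensional spectral gap bound, and the final telescoping with $\|E\|_\mu \le 1 - \mbox{Gap}(m,m-1)$ are all the same moves the paper makes. The only difference is cosmetic: you package the cancellation of the cross term via the operator identity $R_i = (I-E_i)M_i(I-E_i)$ and the commutation $E_iM_i = M_iE_i = E_i$, whereas the paper expands $\langle f_{i,x}, K_{i,x} f_{i,x}\rangle_{\varpi_{i,x}}$ into three terms and shows the cross term vanishes fibre by fibre.
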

	
	To begin our analysis, fix $l \in \{1,\dots,n-1\}$, $\Lambda \subset [n]$ such that $|\Lambda| = n-m$ where $m \in \{l+1,\dots,n\}$, and $\ty \in \X_{\Lambda}$ where $\pi_{\Lambda}(\ty) > 0$.
	Denote by~$\varpi$ the probability measure given by $\pi_{- \Lambda \mid \Lambda}(\cdot \mid \ty)$.
	For $i \in - \Lambda$ and $x \in \X_i$, let $\varpi_{i,x}$ be the probability measure given by $\pi_{ - (\Lambda \cup \{i\}) \mid \Lambda \cup \{i\}}(\cdot \mid \tx_{\Lambda \cup \{i\}})$ where $\tx \in \X$ satisfies $\tx_{\Lambda} = \ty$ and $\tx_{\{i\}} = x$.

	Denote the Mtk of Algorithm~\ref{alg:blo-gibbs} targeting~$\varpi$ with block size~$l$ by  $K(\cdot, \cdot)$.
	Then, for $f \in L^2(\varpi)$ and $\tx \in \X$,
	\[
	Kf(\tx_{- \Lambda}) = \frac{1}{{m \choose l}} \sum_{\stackrel{\Gamma \subset - \Lambda}{|\Gamma| = l}} \E \left[ f(\tX_{- \Lambda}) \mid \tX_{- (\Lambda \cup \Gamma)} = \tx_{- (\Lambda \cup \Gamma)}, \tX_{\Lambda} = \ty \right].
	\]
	It is straightforward to check that~$K$ defines a positive semi-definite operator on $L_0^2(\varpi)$.
	Its spectral gap is $1 - \|K\|_{\varpi} = \mbox{gap}(\Lambda,\ty,l)$.
	
	Denote the Mtk of Algorithm~\ref{alg:blo-gibbs} targeting~$\varpi$ with block size $m-1$ by $\bar{K}(\cdot,\cdot)$.
	Then, for $f \in L^2(\varpi)$ and $\tx \in \X$,
	\[
	\bar{K}f(\tx_{- \Lambda}) = \frac{1}{m} \sum_{i \in - \Lambda} \E \left[ f(\tX_{- \Lambda}) \mid X_i = \tx_{\{i\}}, \tX_{\Lambda} = \ty \right].
	\]
	$\bar{K}$ defines a positive semi-definite operator on $L_0^2(\varpi)$, and its spectral gap satisfies
	\begin{equation} \label{ine:gap-Kbar}
		1 - \|\bar{K}\|_{\varpi} \geq \mbox{Gap}(m,m-1).
	\end{equation}
	
	Denote the Mtk of Algorithm~\ref{alg:blo-gibbs} targeting $\varpi_{i,x}$ with block size~$l$, where $i \in - \Lambda$ and $x \in \X_i$, by $K_{i,x}(\cdot, \cdot)$.
	Then, for $f \in L^2(\varpi_{i,x})$ and $\tx \in \X$,
	\[
	\begin{aligned}
		&K_{i,x}f(\tx_{ - (\Lambda \cup \{i\}) } ) \\
		= & \frac{1}{{m-1 \choose l}} \sum_{\stackrel{\Gamma \subset - (\Lambda \cup \{i\})}{ |\Gamma| = l }} \E \left[ f(\tX_{- (\Lambda \cup \{i\})}) \mid \tX_{- (\Lambda \cup \Gamma \cup \{i\})} = \tx_{- (\Lambda \cup \Gamma \cup \{i\})}, X_i = x, \tX_{\Lambda} = \ty \right].
	\end{aligned}
	\]
	One can check that, for $\pi_{\{i\} \mid \Lambda}(\cdot \mid \ty)$-almost every $x \in \X_i$, $K_{i,x}$ defines a positive semi-definite operator on $L_0^2(\varpi_{i,x})$, and its spectral gap satisfies
	\begin{equation} \label{ine:gap-Kix}
		1 - \|K_{i,x}\|_{\varpi_{i,x}} \geq \mbox{Gap}(m-1,l).
	\end{equation}

	For $f \in L^2(\varpi)$, $i \in - \Lambda$, and $x \in \X_i$, let $f_{i,x}: \X_{- (\Lambda \cup \{i\})} \to \mathbb{R}$ be such that $f(\tx_{- \Lambda}) = f_{i,x}(\tx_{- (\Lambda \cup \{i\})})$ whenever $\tx_{\{i\}} = x$.
	In other words, $f_{i,x}$ is just~$f$ with the $\X_i$-component of its argument fixed at~$x$.
	For instance, if~$f$ is a function on $\X_1 \times \X_2$, then $f_{1,x}(x_2) = f(x,x_2)$ for $x \in \X_1$ and $x_2 \in \X_2$, whereas $f_{2,x}(x_1) = f(x_1, x)$ for $x_1 \in \X_1$ and $x \in \X_2$.
	Given $f \in L^2(\varpi)$, for $\pi_{\{i\} \mid \Lambda}(\cdot \mid \ty)$-almost every $x \in \X_i$, $f_{i,x} \in L^2(\varpi_{i,x})$.
	For $f \in L^2(\varpi)$, the following holds $\varpi$-almost everywhere on $\X_{-\Lambda}$:
	\begin{equation} \label{eq:Kf-rec}
		\begin{aligned}
			& \frac{1}{m} \sum_{i \in - \Lambda} K_{i, \tx_{\{i\}}} f_{i, \tx_{\{i\}} } (\tx_{ - (\Lambda \cup \{i\}) }) \\
			=& \frac{1}{m} \sum_{i \in - \Lambda} \frac{1}{{m-1 \choose l}} \sum_{\stackrel{\Gamma \subset - (\Lambda \cup \{i\})}{ |\Gamma| = l }} \E \left[ f_{i, \tx_{\{i\}}}(\tX_{- (\Lambda \cup \{i\})}) \mid \tX_{- (\Lambda \cup \Gamma \cup \{i\})} = \tx_{- (\Lambda \cup \Gamma \cup \{i\})}, X_i = \tx_{\{i\}}, \tX_{\Lambda} = \ty \right] \\
			=& \frac{1}{m} \sum_{i \in - \Lambda} \frac{1}{{m-1 \choose l}} \sum_{\stackrel{\Gamma \subset - (\Lambda \cup \{i\})}{ |\Gamma| = l }} \E \left[ f_{i, X_i}(\tX_{- (\Lambda \cup \{i\})}) \mid \tX_{- (\Lambda \cup \Gamma \cup \{i\})} = \tx_{- (\Lambda \cup \Gamma \cup \{i\})}, X_i = \tx_{\{i\}}, \tX_{\Lambda} = \ty \right] \\
			=& \frac{1}{m} \sum_{i \in - \Lambda} \frac{1}{{m-1 \choose l}} \sum_{\stackrel{\Gamma \subset - (\Lambda \cup \{i\})}{ |\Gamma| = l }} \E \left[ f(\tX_{-\Lambda}) \mid \tX_{- (\Lambda \cup \Gamma \cup \{i\})} = \tx_{- (\Lambda \cup \Gamma \cup \{i\})}, X_i = \tx_{\{i\}}, \tX_{\Lambda} = \ty \right] \\
			=& \frac{1}{m} \sum_{i \in - \Lambda} \frac{1}{{m-1 \choose l}} \sum_{\stackrel{\Gamma \subset - (\Lambda \cup \{i\})}{ |\Gamma| = l }} \E \left[ f(\tX_{- \Lambda}) \mid \tX_{- (\Lambda \cup \Gamma )} = \tx_{- (\Lambda \cup \Gamma )}, \tX_{\Lambda} = \ty \right] \\
			=& \frac{1}{m} \frac{1}{{m-1 \choose l}} (m-l) \sum_{\stackrel{\Gamma \subset - \Lambda}{ |\Gamma| = l }} \E \left[ f(\tX_{- \Lambda}) \mid \tX_{- (\Lambda \cup \Gamma )} = \tx_{- (\Lambda \cup \Gamma )}, \tX_{\Lambda} = \ty \right] \\
			=& Kf(\tx_{- \Lambda}).
		\end{aligned}
%	K f(\tx_{- \Lambda}) = \frac{1}{m} \sum_{i \in - \Lambda} K_{i, \tx_{\{i\}}} f_{i, \tx_{\{i\}} } (\tx_{ - (\Lambda \cup \{i\}) }).
	\end{equation}
	This formula gives precisely the equivalence between Algorithms~\ref{alg:blo-gibbs} and~\ref{alg:rec-gibbs}.
	
	To prove Lemma~\ref{lem:induction}, fix $f \in L_0^2(\varpi)$.
	For $i \in - \Lambda$, let $\Delta_i f = f - P_i f$, where $P_i f \in L_0^2(\varpi)$ satisfies
	\[
	P_i f(\tx_{- \Lambda}) = \E \left[ f(\tX_{- \Lambda}) \mid X_i = \tx_{\{i\}}, \tX_{\Lambda} = \ty \right].
	\]
	Then, for $\pi_{\{i\} \mid \Lambda}(\cdot \mid \ty)$-almost every $x \in \X_i$, $(P_i f)_{i,x} \in L^2(\varpi_{i,x})$, and $(\Delta_i f)_{i,x} \in L_0^2(\varpi_{i,x})$.
	(In fact, $f_{i,x} \mapsto (P_i f)_{i,x}$ corresponds to the orthogonal projection on $L^2(\varpi_{i,x})$ associated with the subspace of constant functions.)
	By the tower property of conditional expectations,
	\begin{equation} \label{eq:fKf}
	\begin{aligned}
		& \langle f, K f \rangle_{\varpi} \\
		=& \E \left[ f(\tX_{- \Lambda}) K f(\tX_{- \Lambda}) \mid \tX_{\Lambda} = \ty \right] \\
		=& \frac{1}{m} \sum_{i \in - \Lambda} \E \left[ f(\tX_{- \Lambda}) \, K_{i,X_i} f_{i, X_i}(\tX_{- (\Lambda \cup \{i\})}) \mid \tX_{\Lambda} = \ty \right] \quad \text{(by } \eqref{eq:Kf-rec}) \\
		=& \frac{1}{m} \sum_{i \in - \Lambda} \E \left[ f_{i, X_i}(\tX_{- (\Lambda \cup \{i\})}) \, K_{i,X_i} f_{i, X_i}(\tX_{- (\Lambda \cup \{i\})}) \mid \tX_{\Lambda} = \ty \right] \quad (\text{by definition of } f_{i,X_i} )\\
		=& \frac{1}{m} \sum_{i \in - \Lambda} \int_{\X_i} \E \left[ f_{i, x }(\tX_{- (\Lambda \cup \{i\})}) \, K_{i,x} f_{i, x}(\tX_{- (\Lambda \cup \{i\})}) \mid X_i = x, \tX_{\Lambda} = \ty \right] \pi_{\{i\} \mid \Lambda}(x \mid \ty) \,\df x \\
		=& \frac{1}{m} \sum_{i \in - \Lambda} \int_{\X_i} \langle (P_i f)_{i,x} + (\Delta_i f)_{i,x} , K_{i,x} [(P_i f)_{i,x} + (\Delta_i f)_{i,x}] \rangle_{\varpi_{i,x}} \pi_{\{i\} \mid \Lambda}(x \mid \ty) \, \df x.
	\end{aligned}
	\end{equation}
	The integrand in the last line equals
	\begin{equation} \label{eq:three-terms}
	\begin{aligned}
		& \langle (P_i f)_{i,x} + (\Delta_i f)_{i,x} , (P_i f)_{i,x} + K_{i,x} (\Delta_i f)_{i,x} \rangle_{\varpi_{i,x}} \\
		=& \langle (P_i f)_{i,x}, (P_i f)_{i,x} \rangle_{\varpi_{i,x}} + \langle (\Delta_i f)_{i,x}, K_{i,x} (\Delta_i f)_{i,x} \rangle_{\varpi_{i,x}} + \\
		& \langle (P_i f)_{i,x}, K_{i,x} (\Delta_i f)_{i,x} \rangle_{\varpi_{i,x}} + \langle (\Delta_i f)_{i,x}, (P_i f)_{i,x} \rangle_{\varpi_{i,x}} \\
		=& \langle (P_i f)_{i,x}, (P_i f)_{i,x} \rangle_{\varpi_{i,x}} + \langle (\Delta_i f)_{i,x}, K_{i,x} (\Delta_i f)_{i,x} \rangle_{\varpi_{i,x}} + 2\langle (\Delta_i f)_{i,x}, (P_i f)_{i,x} \rangle_{\varpi_{i,x}},
	\end{aligned}
	\end{equation}
	where the last equality follows from the fact that $K_{i,x}$ is self-adjoint and that $K_{i,x} (P_if)_{i,x} = (P_if)_{i,x}$.
	
	Let us examine the three terms in the last line of~\eqref{eq:three-terms}.
	Firstly, one can verify that, for $\pi_{ \{i\} \mid \Lambda}(\cdot \mid \ty)$-almost every $x \in \X_i$,
	\begin{equation} \label{eq:PifPif-ix}
	\begin{aligned}
		\langle (P_i f)_{i,x}, (P_i f)_{i,x} \rangle_{\varpi_{i,x}} &=  \E \left\{ f(\tX_{- \Lambda}) \, \E \left[ f(\tX_{- \Lambda}) \mid X_i = x, \tX_{\Lambda} = \ty \right] \mid X_i = x, \tX_{\Lambda} = \ty \right\} \\
		&= \langle f_{i,x}, (P_i f)_{i,x} \rangle_{\varpi_{i,x}}.
	\end{aligned}
	\end{equation}
	It follows that
	\begin{equation} \label{eq:PifPif}
	\begin{aligned}
		&\frac{1}{m} \sum_{i \in - \Lambda} \int_{\X_i} \langle (P_i f)_{i,x}, (P_i f)_{i,x} \rangle_{\varpi_{i,x}} \, \pi_{\{i\} \mid \Lambda}(x \mid \ty) \, \df x \\
		=& \frac{1}{m} \sum_{i \in - \Lambda} \int_{\X_i} \langle f_{i,x}, (P_i f)_{i,x} \rangle_{\varpi_{i,x}} \, \pi_{\{i\} \mid \Lambda}(x \mid \ty) \, \df x \\
		=& \frac{1}{m} \sum_{i \in - \Lambda} \langle f, P_if \rangle_{\varpi} \\
		=& \langle f, \bar{K} f \rangle_{\varpi} .
	\end{aligned}
	\end{equation}
	
	Secondly, by~\eqref{ine:gap-Kix}, for $\pi_{ \{i\} \mid \Lambda}(\cdot \mid \ty)$-almost every $x \in \X_i$, since $(\Delta_i f)_{i,x} \in L_0^2(\varpi_{i,x})$,
	\[
	\begin{aligned}
		\langle (\Delta_i f)_{i,x}, K_{i,x} (\Delta_i f)_{i,x} \rangle_{\varpi_{i,x}}  
		\leq&  \|K_{i,x}\|_{\varpi_{i,x}} \| (\Delta_i f)_{i,x} \|_{\varpi_{i,x}}^2 \\
		\leq& [1 - \mbox{Gap}(m-1,l)] \| (\Delta_i f)_{i,x} \|_{\varpi_{i,x}}^2.
	\end{aligned}
	\] 
	By~\eqref{eq:PifPif-ix},
	\[
	\begin{aligned}
		&\| (\Delta_i f)_{i,x} \|_{\varpi_{i,x}}^2 \\
		=& \langle f_{i,x}, f_{i,x} \rangle_{\varpi_{i,x}} + \langle (P_i f)_{i,x}, (P_i f)_{i,x} \rangle_{\varpi_{i,x}} - 2 \langle f_{i,x}, (P_i f)_{i,x} \rangle_{\varpi_{i,x}} \\
		=& \| f_{i,x} \|_{\varpi_{i,x}}^2 - \langle f_{i,x}, (P_i f)_{i,x} \rangle_{\varpi_{i,x}}.
	\end{aligned}
	\]
	Therefore,
	\begin{equation} \label{eq:DifDif}
	\begin{aligned}
		&\frac{1}{m} \sum_{i \in - \Lambda} \int_{\X_i} \langle (\Delta_i f)_{i,x}, K_{i,x} (\Delta_i f)_{i,x} \rangle_{\varpi_{i,x}} \pi_{\{i\} \mid \Lambda} (x \mid \ty) \, \df x \\
		\leq & [1 - \mbox{Gap} (m-1,l)] \left[ \|f\|_{\varpi}^2 - \frac{1}{m} \sum_{i \in - \Lambda} \int_{\X_i} \langle f_{i,x}, (P_i f)_{i,x} \rangle_{\varpi_{i,x}} \, \pi_{\{i\} \mid \Lambda}(x \mid \ty) \, \df x \right] \\
		=& [1 - \mbox{Gap} (m-1,l)] \, ( \|f\|_{\varpi}^2 - \langle f, \bar{K} f \rangle_{\varpi} ),
	\end{aligned}
	\end{equation}
	where the final equality follows from~\eqref{eq:PifPif}.
	
	Finally, by~\eqref{eq:PifPif-ix}, for $\pi_{ \{i\} \mid \Lambda}(\cdot \mid \ty)$-almost every $x \in \X_i$,
	\[
	\langle (\Delta_i f)_{i,x}, (P_i f)_{i,x} \rangle_{\varpi_{i,x}} = \langle f_{i,x}, (P_i f)_{i,x} \rangle_{\varpi_{i,x}} - \langle (P_i f)_{i,x}, (P_i f)_{i,x} \rangle_{\varpi_{i,x}} = 0,
	\]
	so
	\begin{equation} \label{eq:DifPif}
	\frac{1}{m} \sum_{i \in - \Lambda} \int_{\X_i} \langle (\Delta_i f)_{i,x}, (P_i f)_{i,x} \rangle_{\varpi_{i,x}} \, \pi_{ \{i\} \mid \Lambda }(x \mid \ty) \, \df x = 0.
	\end{equation}
	Combining~\eqref{ine:gap-Kbar} and~\eqref{eq:fKf} to~\eqref{eq:DifPif} shows that
	\[
	\begin{aligned}
		\langle f, K f \rangle_{\varpi} \leq& [1 - \mbox{Gap}  (m-1,l)] \|f\|_{\varpi}^2 + \mbox{Gap}  (m-1,l) \langle f, \bar{K} f \rangle_{\varpi} \\
		\leq & [1 - \mbox{Gap}  (m-1,l)] \|f\|_{\varpi}^2 + \mbox{Gap} (m-1,l) [1 - \mbox{Gap}  (m,m-1)] \|f\|_{\varpi}^2 \\
		= & [1 - \mbox{Gap}  (m-1,l) \, \mbox{Gap} (m,m-1)] \|f\|_{\varpi}^2.
	\end{aligned}
	\]
	Since~$K$ is positive semi-definite, and $f \in L_0^2(\varpi)$ is arbitrary, Lemma~\ref{lem:induction} holds.
	
	\subsection{Spectral gap and correlation coefficients}
	
	In this subsection, we derive Corollary~\ref{cor:corr}.
	It suffices to show the following.
	\begin{lemma} \label{lem:corr}
		Let $m \in \{2,\dots,n\}$.
		Then, for $\Lambda \subset [n]$ such that $|\Lambda| = n-m$ and $\ty \in \X_{\Lambda}$ such that $\pi_{\Lambda}(\ty) > 0$,
		\[
		\mbox{gap}(\Lambda, \ty, m-1) \geq 1 - s(\Lambda, \ty).
		\]
		In particular,
		\[
		\mbox{Gap}(m,m-1) \geq 1 - S(m).
		\]
	\end{lemma}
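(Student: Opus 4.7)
The plan is to bound the operator norm $\|\bar K\|_\varpi$ of the Markov operator $\bar K$ that was already introduced in the derivation of Theorem~\ref{thm:gap} above, namely the operator of Algorithm~\ref{alg:blo-gibbs} targeting $\varpi=\pi_{-\Lambda\mid\Lambda}(\cdot\mid\ty)$ with block size $m-1$, regarded as a self-adjoint, positive semi-definite operator on $L_0^2(\varpi)$. Since $\mbox{gap}(\Lambda,\ty,m-1)=1-\|\bar K\|_\varpi$ and $\bar K$ is positive semi-definite, it suffices to show that $\langle f,\bar K f\rangle_\varpi\le s(\Lambda,\ty)\,\|f\|_\varpi^2$ for every $f\in L_0^2(\varpi)$; the ``in particular'' statement on $\mbox{Gap}(m,m-1)$ then follows immediately by taking the infimum of the left-hand side and the supremum of the right-hand side over $(\Lambda,\ty)$.

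First I would compute the quadratic form explicitly. After relabeling assume $-\Lambda=\{1,\dots,m\}$, and let $(Y_1,\dots,Y_m)\sim\varpi$ with marginals $\varpi_1,\dots,\varpi_m$. For $f\in L_0^2(\varpi)$, define $g_i(y)=\E[f(\tY)\mid Y_i=y]$; each $g_i$ lies in $L_0^2(\varpi_i)$ since $\E[g_i(Y_i)]=\E[f(\tY)]=0$. The explicit formula for $\bar K$ from the derivation above, combined with the tower property of conditional expectations, yields
\[
\langle f,\bar K f\rangle_\varpi = \frac{1}{m}\sum_{i=1}^m \E[f(\tY)\,g_i(Y_i)] = \frac{1}{m}\sum_{i=1}^m \|g_i\|_{\varpi_i}^2.
\]

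The key step is to bound $\sum_{i=1}^m\|g_i\|_{\varpi_i}^2$ by $m\,s(\Lambda,\ty)\,\|f\|_\varpi^2$ through a combination of Cauchy--Schwarz and the definition of the summation correlation coefficient $s_*(Y_1,\dots,Y_m)=s(\Lambda,\ty)$. Applying the tower property once more gives $\sum_i\|g_i\|_{\varpi_i}^2=\E\bigl[f(\tY)\sum_i g_i(Y_i)\bigr]$, which Cauchy--Schwarz bounds by $\|f\|_\varpi\,\bigl\|\sum_i g_i(Y_i)\bigr\|_\varpi$. On the other hand, the definition of $s_*$ applied to $\{g_i\}$ yields $\bigl\|\sum_i g_i(Y_i)\bigr\|_\varpi^2\le m\,s(\Lambda,\ty)\sum_i\|g_i\|_{\varpi_i}^2$. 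Squaring the Cauchy--Schwarz inequality, substituting this second bound, and dividing by the (assumed positive) quantity $\sum_i\|g_i\|_{\varpi_i}^2$ gives the desired inequality, from which $\langle f,\bar K f\rangle_\varpi\le s(\Lambda,\ty)\,\|f\|_\varpi^2$ follows; the degenerate case in which every $g_i$ vanishes $\varpi_i$-a.s.\ is handled separately and trivially. There is no real obstacle here—the only subtlety is to line up Cauchy--Schwarz and the $s_*$ bound in the right order so that exactly one factor of $\sum_i\|g_i\|_{\varpi_i}^2$ cancels, leaving $\|f\|_\varpi^2$ on the right.
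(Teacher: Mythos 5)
Your proof is correct, but it takes a genuinely different and more elementary route than the paper. The paper first proves an auxiliary Lemma~\ref{lem:Knorm} via Gelfand's formula showing $\|\bar K\|_\varpi = \|\bar K|_\Range\|_\varpi$, and then invokes an external result of Bj{\o}rstad and Mandel (Lemma~\ref{lem:bjorstad}, their Theorem 3.2 on norms of averages of orthogonal projections) to bound $\|\bar K|_\Range\|_\varpi$ by the supremum defining $s(\Lambda,\ty)$. Your argument bypasses both steps: you use the projection identity $\langle f, P_i f\rangle_\varpi = \|P_if\|^2_\varpi$ to write $\langle f,\bar K f\rangle_\varpi = \tfrac{1}{m}\sum_i\|g_i\|^2_{\varpi_i}$ directly, then close the estimate with a single Cauchy--Schwarz plus the definition of $s_*$, cancelling one factor of $\sum_i\|g_i\|^2_{\varpi_i}$. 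Because $g_i = P_if$ already lies in $\Range_i$, you never need to justify restricting the norm computation to the subspace $\Range$, which is exactly what Lemma~\ref{lem:Knorm} exists to do. The result is a shorter, self-contained proof that avoids citing Bj{\o}rstad--Mandel; what the paper's route buys in exchange is an explicit link to that literature on sums of projections, at the cost of needing the Gelfand-formula lemma to make the citation applicable. Both arrive at the same inequality $\|\bar K\|_\varpi \le s(\Lambda,\ty)$, from which the lemma and the ``in particular'' follow as you describe.
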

	
	To prove the lemma, fix $\Lambda \subset [n]$ such that $|\Lambda| = n-m$ where $m \in \{2,\dots.n\}$ and $\ty \in \X_{\Lambda}$ such that $\pi_{\Lambda}(\ty) > 0$.
	As in the previous section, denote by~$\varpi$ the probability measure given by $\pi_{- \Lambda \mid \Lambda}(\cdot \mid \ty)$, and let $\bar{K}(\cdot,\cdot)$ be the Mtk of Algorithm~\ref{alg:blo-gibbs} targeting~$\varpi$ with block size $m-1$.
	Then
	\[
	\mbox{gap}(\Lambda,\ty,m-1) = 1 - \|\bar{K}\|_{\varpi}.
	\]
	For $f \in L_0^2(\varpi)$,
	\[
	\bar{K} f = \frac{1}{m} \sum_{i \in - \Lambda} P_i f,
	\]
	where $P_i: L_0^2(\varpi) \to L_0^2(\varpi)$ satisfies
	\[
	P_i f(\tx_{- \Lambda}) = \E \left[ f(\tX_{- \Lambda}) \mid X_i = \tx_{\{i\}}, \tX_{\Lambda} = \ty \right].
	\]
	For $i \in - \Lambda$, $P_i^2 = P_i$, and for $f, g \in L_0^2(\varpi)$,
	\[
	\langle P_i f, g \rangle_{\varpi} = \langle P_i f, P_i g \rangle_{\varpi} = \langle f, P_i g \rangle_{\varpi} .
	\]
	In fact, $P_i$ is the orthogonal projection onto the space of $L_0^2(\varpi)$ functions $\tx_{- \Lambda} \mapsto f(\tx_{- \Lambda})$ that depend on $\tx_{- \Lambda}$ only through $\tx_{\{i\}}$.
	Denote the range of $P_i$ by $\Range_i$.
	Let $\Range = \sum_{i \in - \Lambda} \Range_i$.
	That is, $\Range \subset L_0^2(\varpi)$ consists of functions that are sums of functions from $\Range_i$.
	Obviously, $\bar{K}$ maps a function in $L_0^2(\varpi)$ to a function in~$\Range$.
	Let $\bar{K}|_{\Range}$ be~$\bar{K}$ restricted to~$\Range$.
	We then have the following lemma.
	\begin{lemma} \label{lem:Knorm}
		\begin{equation} \label{eq:Knorm}
		\|\bar{K}\|_{\varpi} = \|\bar{K}|_{\Range}\|_{\varpi}.
		\end{equation}
	\end{lemma}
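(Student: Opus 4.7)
The plan is to exploit two structural facts: that $\bar K$ is self-adjoint on $L_0^2(\varpi)$, and that its image lies in $\Range$. The inequality $\|\bar K\|_{\varpi} \geq \|\bar K|_{\Range}\|_{\varpi}$ is immediate from $\Range \subset L_0^2(\varpi)$, so the whole task is the reverse bound.

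First I would let $\overline{\Range}$ denote the closure of~$\Range$ in $L_0^2(\varpi)$, so that $L_0^2(\varpi) = \overline{\Range} \oplus \overline{\Range}^{\perp}$. The key claim is that $\bar K$ annihilates $\overline{\Range}^{\perp}$. To see this, fix $h \in \overline{\Range}^{\perp}$ and compute
\[
\|\bar K h\|_{\varpi}^2 = \langle \bar K h, \bar K h \rangle_{\varpi} = \langle h, \bar K^2 h \rangle_{\varpi},
\]
using that $\bar K$ is self-adjoint. Since $\bar K$ maps $L_0^2(\varpi)$ into $\Range$, the vector $\bar K^2 h$ lies in $\Range \subset \overline{\Range}$, and hence is orthogonal to~$h$, so $\|\bar K h\|_{\varpi} = 0$.

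Next I would decompose an arbitrary $f \in L_0^2(\varpi)$ as $f = g + h$ with $g \in \overline{\Range}$ and $h \in \overline{\Range}^{\perp}$. By the previous step, $\bar K f = \bar K g$, so
\[
\|\bar K f\|_{\varpi} = \|\bar K g\|_{\varpi} \leq \|\bar K|_{\overline{\Range}}\|_{\varpi} \|g\|_{\varpi} \leq \|\bar K|_{\overline{\Range}}\|_{\varpi} \|f\|_{\varpi}.
\]
Taking the supremum over~$f$ gives $\|\bar K\|_{\varpi} \leq \|\bar K|_{\overline{\Range}}\|_{\varpi}$. Finally, since $\bar K$ is continuous on $L_0^2(\varpi)$ and $\Range$ is dense in $\overline{\Range}$, the operator norms of $\bar K|_{\Range}$ and $\bar K|_{\overline{\Range}}$ coincide, which yields~\eqref{eq:Knorm}.

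The only real subtlety I foresee is handling the possible non-closedness of~$\Range$: one must distinguish between $\Range$ and $\overline{\Range}$ and invoke density in the final step. Everything else is routine Hilbert-space manipulation, relying only on the self-adjointness of~$\bar K$ (each $P_i$ is an orthogonal projection, hence self-adjoint, and self-adjointness is preserved under convex combinations) and on the evident fact that $\bar K$ takes values in~$\Range$.
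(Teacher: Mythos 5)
Your proof is correct, but it takes a genuinely different route from the paper's. The paper exploits that $\bar K$ is self-adjoint (so its norm equals its spectral radius) together with Gelfand's formula $\|\bar K\|_\varpi = \lim_t \|\bar K^t\|_\varpi^{1/t}$; since $\bar K$ maps $L_0^2(\varpi)$ into $\Range$, every power $t\ge 2$ factors as $\bar K^t f = \bar K|_\Range^{\,t-1}(\bar K f)$, giving $\|\bar K^t\|_\varpi \le \|\bar K|_\Range\|_\varpi^{t-1}\|\bar K\|_\varpi$, and the reverse inequality drops out after taking $t$-th roots. You instead give a direct Hilbert-space argument: decompose $L_0^2(\varpi) = \overline{\Range}\oplus\overline{\Range}^\perp$, note via self-adjointness that $\langle \bar K h, \bar K h\rangle_\varpi = \langle h, \bar K^2 h\rangle_\varpi = 0$ for $h\in\overline{\Range}^\perp$ (because $\bar K^2 h\in\Range$), conclude $\|\bar K\|_\varpi \le \|\bar K|_{\overline{\Range}}\|_\varpi$ using Pythagoras, and finish with continuity and density to replace $\overline{\Range}$ by $\Range$. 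Both approaches are sound and use exactly the same two structural inputs (self-adjointness, range inside $\Range$). Yours is more elementary in that it avoids Gelfand's formula, at the cost of the extra bookkeeping step with the closure; the paper's is a little slicker because it never has to distinguish $\Range$ from $\overline{\Range}$. Either would serve the paper's purposes equally well, and indeed the same dichotomy arises again in Section 5.3, where the paper says ``just like in Lemma~\ref{lem:Knorm}, one can argue that'' $\|\RW\|_\varphi = \|\RW|_{\Range'}\|_\varphi$ — your decomposition argument would port over to that occurrence just as readily.
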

	\begin{proof}
		It is clear that $\|\bar{K}\|_{\varpi} \geq \|\bar{K}|_{\Range}\|_{\varpi}$.
		It remains to prove the reverse inequality.
		Since~$\bar{K}$ is self-adjoint, its norm equals its spectral radius.
		Then, by Gelfand's formula,
		\begin{equation} \label{eq:gelfand}
		\|\bar{K}\|_{\varpi} = \lim_{t \to \infty} \|\bar{K}^t \|_{\varpi}^{1/t}.
		\end{equation}
		For any $f \in L_0^2(\varpi)$ and positive integer~$t$ such that $t \geq 2$,
		\[
		\|\bar{K}^tf\|_{\varpi} = \|\bar{K}|_{\Range}^{t-1} \bar{K} f \|_{\varpi} \leq \|\bar{K}|_{\Range}\|_{\varpi}^{t-1} \| \bar{K} \|_{\varpi} \|f\|_{\varpi}.
		\]
		Then
		\[
		\lim_{t \to \infty} \|\bar{K}^t \|_{\varpi}^{1/t} \leq \lim_{t \to \infty} \|\bar{K}|_{\Range}\|_{\varpi}^{(t-1)/t} \|\bar{K}\|_{\varpi}^{1/t} = \|\bar{K}|_{\Range}\|_{\varpi}.
		\]
		It then follows from~\eqref{eq:gelfand} that
		\[
		\|\bar{K}\|_{\varpi} \leq \|\bar{K}|_{\Range}\|_{\varpi}.
		\]
	\end{proof}
	
	To derive Lemma~\ref{lem:corr}, we combine Lemma~\ref{lem:Knorm} with a simple result from \cite{bjorstad1991spectra} concerning norms of sums of orthogonal projections.
	\begin{lemma} \citep[][Theorem 3.2]{bjorstad1991spectra} \label{lem:bjorstad}
		\[
		\|\bar{K}|_{\Range}\|_{\varpi} \leq \sup_{\stackrel{f_i \in \Range_i \; \forall i}{\exists i \text{ s.t. } f_i \neq 0}} \frac{\left\| \sum_{i \in - \Lambda} f_i \right\|_{\varpi}^2 }{ m \sum_{i \in - \Lambda} \|f_i\|_{\varpi}^2 }.
		\]
	\end{lemma}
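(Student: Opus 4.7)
The plan is to identify $\bar{K}|_{\Range}$ with $SS^*$ for a natural bounded operator $S$ between two Hilbert spaces, so that the desired inequality follows from the standard identity $\|SS^*\| = \|S\|^2$; in fact, this will give equality in the bound claimed by the lemma.

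Concretely, I would equip the direct sum $H := \bigoplus_{i \in -\Lambda} \Range_i$ with the rescaled inner product
\[
\langle (f_i)_i, (g_i)_i \rangle_H \; := \; m \sum_{i \in -\Lambda} \langle f_i, g_i \rangle_{\varpi},
\]
and define $S: H \to L_0^2(\varpi)$ by $S((f_i)_i) = \sum_{i \in -\Lambda} f_i$. The purpose of the factor $m$ is that the supremum on the right-hand side of the lemma then coincides, by definition, with $\|S\|^2$; the image of $S$ is $\Range$ by construction. A short computation, using that each $P_i$ is the orthogonal projection onto $\Range_i$ (so that $\langle f_i, g\rangle_{\varpi} = \langle f_i, P_i g\rangle_{\varpi}$ whenever $f_i \in \Range_i$), identifies the adjoint as $(S^* g)_i = P_i g / m$, from which
\[
SS^* g \;=\; \sum_{i \in -\Lambda} \frac{P_i g}{m} \;=\; \bar{K} g \qquad \text{for every } g \in L_0^2(\varpi).
\]

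To finish, I would invoke Lemma~\ref{lem:Knorm} (or, equivalently, simply observe that each $P_i$ annihilates $\Range^{\perp}$, so $\bar{K}$ does too) to reduce to $\|\bar{K}\|_{\varpi} = \|\bar{K}|_{\Range}\|_{\varpi}$, and then apply $\|SS^*\| = \|S\|^2$ to conclude. The only real obstacle is the bookkeeping around the constant $m$ in the definition of $\langle \cdot, \cdot\rangle_H$: it must be placed exactly there so that the supremum appearing in the lemma equals $\|S\|^2$ and the adjoint computation simultaneously produces the $1/m$ needed to recover $\bar{K} = \frac{1}{m}\sum_i P_i$. Once this scaling is pinned down, everything else is routine Hilbert-space operator theory.
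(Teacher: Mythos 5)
Your proposal is correct, and it actually establishes equality where the lemma only asserts an inequality. There is no paper proof to compare against: the paper simply cites Theorem~3.2 of \cite{bjorstad1991spectra} and does not reprove it, so you are supplying a self-contained argument where the paper defers to the literature. The device you use---realizing $\bar{K} = \frac{1}{m}\sum_{i\in-\Lambda} P_i$ as $SS^*$ for a ``gluing'' map $S$ on a rescaled direct sum of the ranges $\Range_i$, then applying $\|SS^*\| = \|S^*S\| = \|S\|^2$---is the standard operator-theoretic route to norm bounds for sums of orthogonal projections, and is almost certainly the idea behind the cited theorem. A few small remarks, none affecting correctness. First, you only need the trivial inequality $\|\bar{K}|_{\Range}\|_{\varpi} \le \|\bar{K}\|_{\varpi}$ here rather than the full equality of Lemma~\ref{lem:Knorm}, so there is no dependence on (and hence no circularity with) that lemma. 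Second, your alternative justification---that each $P_i$ annihilates $\Range^{\perp}$---strictly gives $\|\bar{K}\|_{\varpi} = \|\bar{K}|_{\overline{\Range}}\|_{\varpi}$, since $\Range = \sum_i \Range_i$ need not be closed; one then passes to $\Range$ by density and continuity. Third, boundedness of $S$ (hence existence of $S^*$) is automatic: Cauchy--Schwarz gives $\|\sum_i f_i\|_{\varpi}^2 \le m\sum_i\|f_i\|_{\varpi}^2$, so $\|S\| \le 1$.
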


	Note that for $f \in L_0^2(\varpi)$, 
	\[
	\|f\|_{\varpi}^2 = \E \left[ f(\tX_{- \Lambda})^2 \mid \tX_{\Lambda} = \ty \right].
	\]
	Moreover, if we let $\varpi_i$ be the probability measure on $(\X_i,\B_i)$ given by $\pi_{\{i\} \mid \Lambda}(\cdot \mid \ty)$, then there is a natural isomorphism from $\Range_i$ to $L_0^2(\varpi_i)$.
	It follows that
	\[
	\begin{aligned}
		\sup_{\stackrel{f_i \in \Range_i \; \forall i}{\exists i \text{ s.t. } f_i \neq 0}} \frac{\left\| \sum_{i \in - \Lambda} f_i \right\|_{\varpi}^2 }{ m \sum_{i \in - \Lambda} \|f_i\|_{\varpi}^2 } 
		=& \sup_{\stackrel{f_i \in L_0^2(\varpi_i) \; \forall i}{\exists i \text{ s.t. } \E[f_i(X_i)^2 \mid \tX_{\Lambda} = \ty] > 0}} \frac{\E \left\{ \left[ \sum_{i \in - \Lambda} f_i(X_i) \right]^2 \mid \tX_{\Lambda} = \ty \right\}}{m \sum_{i \in - \Lambda} \E [ f_i(X_i)^2 \mid \tX_{\Lambda} = \ty ] } \\
		=& s(\Lambda, \ty).
	\end{aligned}
	\]
	Lemma~\ref{lem:corr} then follows from Lemmas~\ref{lem:Knorm} and~\ref{lem:bjorstad}.
	
	\subsection{Spectral gap and random walks} \label{ssec:proof-randomwalk}
	
	In this section, we derive Corollary~\ref{cor:randomwalk}.
	In light of Corollary~\ref{cor:corr}, it suffices to prove the following result.
	
	\begin{lemma} \label{lem:rw}
		Let $m \in \{2,\dots,n\}$.
		Then, for $\Lambda \subset [n]$ such that $|\Lambda| = n-m$ and $\ty \in \X_{\Lambda}$ such that $\pi_{\Lambda}(\ty) > 0$,
		\[
		g(\Lambda,\ty) = 1 - s(\Lambda, \ty).
		\]
		In particular,
		\[
		G(m) = 1 - S(m).
		\]
	\end{lemma}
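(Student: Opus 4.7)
The plan is to show that the random-walk operator $R$ on $L^2(\varphi_{\Lambda,\ty})$ is positive semi-definite with $\|R\|_{\varphi_{\Lambda,\ty}} = s(\Lambda,\ty)$; this gives $g(\Lambda,\ty) = 1 - s(\Lambda,\ty)$, and then $G(m) = 1 - S(m)$ follows by taking the infimum over $(\Lambda,\ty)$ since $\inf(1-s) = 1 - \sup s$. Throughout, fix $\Lambda$ with $|\Lambda| = n-m$ and $\ty$ with $\pi_\Lambda(\ty) > 0$, and write $\varphi = \varphi_{\Lambda,\ty}$. I would parametrize $L^2(\varphi)$ using the bijection $h \leftrightarrow (f_i)_{i \in -\Lambda}$ with $f_i(x) = h(i,x)$ and $f_i \in L^2(\varpi_i)$, where $\varpi_i = \pi_{\{i\}\mid\Lambda}(\cdot \mid \ty)$. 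Under this correspondence,
\[
\|h\|_\varphi^2 = \frac{1}{m}\sum_{i \in -\Lambda} \E[f_i(X_i)^2 \mid \tX_\Lambda = \ty],
\]
and $h \in L_0^2(\varphi)$ iff the joint centering $\sum_{i \in -\Lambda} \E[f_i(X_i) \mid \tX_\Lambda = \ty] = 0$ holds.

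The crux is a direct computation of $\langle h, Rh\rangle_\varphi$ from Algorithm~\ref{alg:randomwalk}. The ``stay'' branch (probability $1/m$) contributes $\frac{1}{m^2}\sum_j \E[f_j(X_j)^2 \mid \tX_\Lambda = \ty]$, while each ``switch'' branch $j \to j'$ contributes $\frac{1}{m^2}\E[f_j(X_j)\,f_{j'}(X_{j'}) \mid \tX_\Lambda = \ty]$, obtained by applying the tower property to the kernel $\pi_{\{j'\}\mid\Lambda\cup\{j\}}(\cdot \mid \tx_{\Lambda \cup \{j\}})$. Summing over all branches yields the symmetric identity
\[
\langle h, Rh\rangle_\varphi = \frac{1}{m^2}\,\E\!\left[\bigg(\sum_{j \in -\Lambda} f_j(X_j)\bigg)^{\!2} \, \Big| \, \tX_\Lambda = \ty\right].
\]
This makes $\langle h, Rh\rangle_\varphi \geq 0$; combined with reversibility (which gives self-adjointness), $R$ is positive semi-definite, so the formula recalled in Section~\ref{sec:prelim} yields $\|R\|_\varphi = \sup\{\langle h, Rh\rangle_\varphi / \|h\|_\varphi^2 : h \in L_0^2(\varphi),\ h \neq 0\}$. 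Substituting the two formulas turns this supremum into the Rayleigh quotient defining $s(\Lambda,\ty)$, with the one caveat that the constraint is joint centering rather than individual centering of each $f_i$.

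The final step is to reconcile the two centering constraints. Individual centering trivially implies joint centering, so the individually-centered supremum is at most the jointly-centered one. For the reverse, given a jointly-centered tuple $(f_i)$, replace each $f_i$ by $\tilde f_i = f_i - \E[f_i(X_i) \mid \tX_\Lambda = \ty]$: joint centering guarantees $\sum_i \tilde f_i(X_i) = \sum_i f_i(X_i)$ almost surely, preserving the numerator, while $\E[\tilde f_i(X_i)^2] \leq \E[f_i(X_i)^2]$ only shrinks the denominator, so the ratio does not decrease. Hence the two suprema agree, $\|R\|_\varphi = s(\Lambda,\ty)$, and the lemma follows. I expect the main obstacle to be the inner-product computation: keeping careful track of how the $1/m$ weights from the ``stay''/``switch'' selection combine with the stationary weight $\tfrac{1}{m}\pi_{\{j\}\mid\Lambda}(\cdot\mid\ty)$ so that everything collapses into a single squared sum. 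Once that identity is in hand, positive semi-definiteness and the centering argument are both short.
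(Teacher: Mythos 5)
Your proof is correct, and it takes a genuinely different route from the paper's. The paper defines the individually-centered subspace $\Range' = \{f \in L_0^2(\varphi): T_if \in L_0^2(\varpi_i) \text{ for all } i \in -\Lambda\}$, observes that~$\RW$ maps $L_0^2(\varphi)$ into $\Range'$, and then invokes the Gelfand-formula argument (the same device as in Lemma~\ref{lem:Knorm}) to conclude $\|\RW\|_\varphi = \|\RW|_{\Range'}\|_\varphi$; the Rayleigh quotient over $\Range'$ is then exactly $s(\Lambda,\ty)$, which the paper verifies in two separate inequalities. You instead apply the positive-semi-definite variational formula directly on all of $L_0^2(\varphi)$, where the constraint is merely \emph{joint} centering $\sum_i \E[f_i(X_i) \mid \tX_\Lambda=\ty]=0$, and then reconcile the two constraint sets by the shift $f_i \mapsto f_i - \E[f_i(X_i)\mid\tX_\Lambda=\ty]$: joint centering makes the constants cancel in the numerator, while $\E[\tilde f_i(X_i)^2] = \E[f_i(X_i)^2] - (\E[f_i(X_i)])^2$ can only decrease the denominator, so the jointly-centered and individually-centered suprema coincide. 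This bypasses the spectral-radius/invariant-subspace machinery entirely and is more self-contained; the paper's choice is natural because it reuses the exact same $\Range$-restriction template already deployed for Corollary~\ref{cor:corr}. One point worth stating explicitly in your write-up: the degenerate case where the shift annihilates every $\tilde f_i$ forces $\sum_i f_i(X_i)=0$ a.s.\ (numerator zero), so it cannot affect a supremum that is always at least $1/m$; with that observation your argument is complete.
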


	To prove Lemma~\ref{lem:rw}, fix $\Lambda \subset [n]$ such that $|\Lambda| = n-m$, where $m \in \{2,\dots,n\}$, and $\ty \in \X_{\Lambda}$ such that $\pi_{\Lambda}(\ty) > 0$.
	
	Consider Algorithm~\ref{alg:randomwalk} associated with~$\Lambda$ and~$\ty$.
	Recall that the underlying random walk Markov chain has $\tilde{\X} = \bigcup_{i \in - \Lambda} (\{i\} \times \X_i)$ as its state space.
	The chain is reversible with respect to the probability measure~$\varphi$ given by
	\[
	\varphi(\{i\} \times A) = \frac{1}{m} \varpi_i(A), \quad i \in - \Lambda, \; A \in \B_i,
	\]
	where $\varpi_i$ is the probability measure on $(\X_i,\B_i)$ given by $\pi_{\{i\} \mid \Lambda}(\cdot \mid \ty)$.
	A measurable function on~$\tilde{\X}$ has the form $(i,x) \mapsto f(i,x)$, where $i \in - \Lambda$ and $x \in \X_i$.
	For such a function~$f$, we can identify~$m$ functions $(T_i f)_{i \in - \Lambda}$, such that $T_i f(x) = f(i,x)$ for $i \in - \Lambda$ and $x \in \X_i$.
	Then $f \in L_0^2(\varphi)$ if and only if $T_i f \in L^2(\varpi_i)$ for each~$i$, and
	\begin{equation} \label{eq:Tif-sum}
	\sum_{i \in - \Lambda} \E [T_i f(X_i) \mid \tX_{\Lambda} = \ty] = 0.
	\end{equation}
	For $f \in L_0^2(\varphi)$,
	\begin{equation} \label{eq:fnorm-varphi}
	\|f\|_{\varphi}^2 = \frac{1}{m} \sum_{i \in - \Lambda} \E\{ [T_if(X_i)]^2 \mid \tX_{\Lambda} = \ty \} .
	% \geq \frac{1}{m} \sum_{i \in - \Lambda} \E\{ [T^{\Delta}_if(X_i)]^2 \mid \tX_{\Lambda} = \ty \},
	\end{equation}
%	where
%	\[
%	T_i^{\Delta} f(x) = T_i f(x) - \E [T_i f(X_i) \mid \tX_{\Lambda} = \ty]
%	\]
%	for $x \in \X_i$.
%	Note that when $f \in L_0^2(\varphi)$, we have $T^{\Delta}_i f \in L_0^2(\varpi_i)$, and 
%	\begin{equation} \label{eq:TiTiDelta}
%		\sum_{i \in - \Lambda} [T_i f(x) - T^{\Delta}_i f(x)] = 0.
%	\end{equation}

	Let $\RW(\cdot,\cdot)$ be the Mtk of Algorithm~\ref{alg:randomwalk} associated with~$\Lambda$ and~$\ty$.
	$\RW(\cdot,\cdot)$ defines the following operator on $L_0^2(\varphi)$:
	For $f \in L_0^2(\varphi)$, $i \in - \Lambda$, and $x \in \X_i$,
	\begin{equation} \label{eq:RWf}
		\RW f(i,x) = \frac{1}{m} \sum_{j \in - \Lambda} \E[ T_j f(X_j) \mid X_i = x, \, \tX_{\Lambda} = \ty] .
	\end{equation}
	It follows that, for $f \in L_0^2(\varphi)$,
	\begin{equation} \label{eq:fRWf}
	\begin{aligned}
		\langle f, \RW f \rangle_{\varphi} =& \frac{1}{m^2} \sum_{i,j \in - \Lambda} \E[T_i f(X_i) \, T_j f(X_j) \mid \tX_{\Lambda} = \ty] \\
		=& \frac{1}{m^2} \E \left\{ \left[\sum_{i \in - \Lambda} T_i f(X_i) \right]^2 \Big | \; \tX_{\Lambda} = \ty \right\} .
%		=& \frac{1}{m^2} \E \left\{ \left[\sum_{i \in - \Lambda} T^{\Delta}_i f(X_i) \right]^2 \Big | \; \tX_{\Lambda} = \ty \right\},
	\end{aligned}
	\end{equation}
%	where the final equality follows from~\eqref{eq:TiTiDelta}.
	From this formula, we can see that~$\RW$ is positive semi-definite.
	
	Let $\Range'$ be the space of functions~$f$ in $L_0^2(\varphi)$ such that 
	\[
	\E [T_i f(X_i) \mid \tX_{\Lambda} = \ty] = 0
	\] 
	for $i \in - \Lambda$.
	In other words, $f \in \Range'$ if and only if $T_i f \in L_0^2(\varpi_i)$ for $i \in - \Lambda$.
	By~\eqref{eq:Tif-sum} and~\eqref{eq:RWf}, for $f \in L_0^2(\varphi)$, $\RW f \in \Range'$.
	Just like in Lemma~\ref{lem:Knorm}, one can argue that
	\[
	1 - g(\Lambda, \ty) = \|\RW\|_{\varphi} = \|\RW|_{\Range'}\|_{\varphi} ,
	\]
	where $\RW|_{\Range'}$ is~$\RW$ restricted to~$\Range'$.
	It then follows from~\eqref{eq:fnorm-varphi},~\eqref{eq:fRWf}, and the fact that~$\RW$ is positive semi-definite that 
	\begin{equation} \label{eq:1-g}
	1 - g(\Lambda, \ty) = \sup_{\stackrel{f \in \Range'}{f \neq 0}} \frac{\langle f, \RW f \rangle_{\varphi}}{\|f\|_{\varphi}^2} \leq s(\Lambda, \ty).
	\end{equation}
	
	It remains to show the reverse inequality.
	To this end, let $(f_i)_{i \in - \Lambda}$ be such that $f_i \in L_0^2(\varpi_i)$ for each~$i$, and that $f_i \neq 0$ for some~$i$.
	One can find a function $f \in L_0^2(\varphi)$ such that
	\[
	f(i,x) = T_i f(x) = f_i(x)
	\] 
	for $i \in - \Lambda$ and $x \in \X_i$.
	Then, by~\eqref{eq:fnorm-varphi} and~\eqref{eq:fRWf},
	\[
	\begin{aligned}
		\frac{\E \left\{ \left[ \sum_{i \in - \Lambda} f_i(X_i) \right]^2 \mid \tX_{\Lambda} = \ty \right\}}{m \sum_{i \in - \Lambda} \E [ f_i(X_i)^2 \mid \tX_{\Lambda} = \ty ] } &= \frac{\E \left\{ \left[ \sum_{i \in - \Lambda} T_i f(X_i) \right]^2 \mid \tX_{\Lambda} = \ty \right\}}{m \sum_{i \in - \Lambda} \E [ T_i f(X_i)^2 \mid \tX_{\Lambda} = \ty ] } \\
		& = \frac{\langle f, \RW f \rangle_{\varphi}}{\|f\|_{\varphi}^2} \\
		& \leq \|\RW\|_{\varphi}.
	\end{aligned}
	\]
	This shows that
	\[
	s(\Lambda, \ty) \leq 1 - g(\Lambda, \ty).
	\]
	In summary, Lemma~\ref{lem:rw} holds.

	\subsection{Spectral gap and spectral independence} \label{ssec:specind-proof}
	
	In this section, we prove Corollary~\ref{cor:specind}.
	In light of Corollary~\ref{cor:randomwalk}, it suffices to prove the following.
	\begin{lemma} \label{lem:specind}
		Let $\Lambda \subset [n]$ be such that $|\Lambda| = n-m$, where $m \in \{2,\dots,n\}$, and let $\ty \in \X_{\Lambda}$ be such that $\pi_{\Lambda}(\ty) > 0$.
		Suppose that there is an influence matrix $\Phi(\Lambda,\ty)$ associated with $(\Lambda,\ty)$ such that 
		\[
		r(\Phi(\Lambda,\ty)) \leq \eta,
		\]
		where $\eta < m-1$.
		Then
		\[
		g(\Lambda,\ty) \geq \frac{m-1}{m} - \frac{\eta}{m}.
		\]
%		Suppose further that each of the following conditions hold.
%		\begin{enumerate}
%			\item [(A2')] For $i,j \in - \Lambda$ such that $i \neq j$, there exists $k < \infty$ such that, for $\pi_{\{i\} \mid \Lambda}(\cdot \mid \ty)$-almost every $x, x' \in \X_i$, 
%			\[
%			d_{\scriptsize\mbox{TV}}(\Pi^j_{\Lambda,\ty,i,x}, \Pi^j_{\Lambda,\ty,i,x'}) \leq k d_{\Lambda,\ty,i}(x,x').
%			\]
%			\item [(A3')]
%			\[
%			\int_{\X_i} \left[ \int_{\X_i} d_{\Lambda,\ty,i}(x,x') \, \pi_{\{i\} \mid \Lambda} (\df x \mid \ty) \right]^2 \pi_{\{i\} \mid \Lambda} (\df x' \mid \ty) < \infty.
%			\]
%		\end{enumerate}
%		Then
%		\[
%		g(\Lambda,\ty) \geq \frac{m-1}{m} - \frac{\eta}{m}.
%		\]
	\end{lemma}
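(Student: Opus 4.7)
In light of Lemma~\ref{lem:rw}, which identifies $g(\Lambda,\ty) = 1 - s(\Lambda,\ty)$, it suffices to prove $s(\Lambda,\ty) \leq (1+\eta)/m$. Expanding the definition of $s_*$ and isolating the diagonal contributions,
\[
s(\Lambda,\ty) = \frac{1}{m} + \frac{1}{m}\, \sup_{(g_i)} \frac{\sum_{i \neq j} \E\!\left[g_i(Y_i)\, g_j(Y_j) \mid \tX_{\Lambda} = \ty\right]}{\sum_i \|g_i\|_{\varpi_i}^2},
\]
where the supremum runs over tuples $(g_i)_{i \in -\Lambda}$ with $g_i \in L_0^2(\varpi_i)$ and not all zero, and $(Y_1,\ldots,Y_m)$ is distributed according to $\pi_{-\Lambda \mid \Lambda}(\cdot \mid \ty)$.

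The plan is to recognize the ratio on the right as the largest eigenvalue of a self-adjoint block operator, and then to bound that eigenvalue by $r(\Phi(\Lambda,\ty))$. For $i, j \in -\Lambda$ with $i \neq j$, define the conditional-expectation operator $Q_{i,j}: L_0^2(\varpi_j) \to L_0^2(\varpi_i)$ by $Q_{i,j} g(x) = \E[g(Y_j) \mid Y_i = x,\, \tX_{\Lambda} = \ty]$. Since $\langle h, Q_{i,j} g \rangle_{\varpi_i} = \E[h(Y_i)\, g(Y_j) \mid \tX_{\Lambda} = \ty]$, the adjoint satisfies $Q_{i,j}^* = Q_{j,i}$, so the block operator $\mathbf{Q}'$ on $\bigoplus_{i \in -\Lambda} L_0^2(\varpi_i)$ with off-diagonal blocks $Q_{i,j}$ and zero diagonal blocks is self-adjoint, and the supremum above coincides with its largest eigenvalue.

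The key step is to show that each block satisfies $\|Q_{i,j}\|_{\mathrm{op}} \leq \phi_{i,j}$. The natural approach is to combine the variance identity
\[
\|Q_{i,j} g\|_{\varpi_i}^2 = \frac{1}{2} \int_{\X_i} \int_{\X_i} \bigl(Q_{i,j}g(x) - Q_{i,j}g(x')\bigr)^2 \, \varpi_i(\df x)\, \varpi_i(\df x')
\]
with the coupling kernel $K_{i,j}$ witnessing~\eqref{ine:contraction}: Jensen's inequality and a Cauchy--Schwarz step leveraging (H1) and (H2) should convert the first-order contraction into the desired $L^2$ operator-norm bound. This is the principal obstacle, since~\eqref{ine:contraction} is inherently a Wasserstein-type inequality that controls differences of Lipschitz test functions, whereas we need a bound for arbitrary $g \in L_0^2(\varpi_j)$; lifting to the $L^2$ setting without losing the factor $\phi_{i,j}$ will require careful use of the regularity hypotheses.

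Granted the entrywise bound $\|Q_{i,j}\|_{\mathrm{op}} \leq \phi_{i,j}$, a Perron--Frobenius/Gelfand argument concludes. Iterating, the $(i,j)$-block of $(\mathbf{Q}')^n$ is a sum of products of the $Q_{\cdot,\cdot}$ whose operator norms are dominated entrywise by the corresponding entry of $\Phi^n$. Passing to the $n$th root and letting $n \to \infty$, Gelfand's formula yields $\|\mathbf{Q}'\|_{\mathrm{op}} \leq r(\Phi(\Lambda,\ty)) \leq \eta$. Substituting back gives $s(\Lambda,\ty) \leq (1+\eta)/m$, and Lemma~\ref{lem:rw} then delivers $g(\Lambda,\ty) \geq (m-1-\eta)/m$, completing the proof.
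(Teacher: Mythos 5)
The reduction to $\|\tilde R\|_{\varphi}\le\eta/m$ (equivalently, bounding the operator norm of the block operator $\mathbf Q'$ by $r(\Phi)$) and the final Gelfand step are both fine and are indeed part of the paper's argument. The fatal gap is the entrywise claim $\|Q_{i,j}\|_{\mathrm{op}} \leq \phi_{i,j}$, which you flag as ``the principal obstacle'' and then leave unresolved. This claim is in fact \emph{false} for general distance-like functions $d_{\Lambda,\ty,i}$: the Wasserstein-contraction hypothesis \eqref{ine:contraction} controls how $Q_{i,j}$ acts on Lipschitz test functions, not on arbitrary $g \in L_0^2(\varpi_j)$, and conditions (H1)--(H2) do not bridge the gap by a Jensen/Cauchy--Schwarz manipulation of the variance identity. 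Concretely, run your Jensen step: you get $\|Q_{i,j}g\|_{\varpi_i}^2 \le \tfrac12 \int (g(z)-g(z'))^2\, \kappa(\df z,\df z')$, where $\kappa = \int_{\X_i^2} K_{i,j}((x,x'),\cdot)\,\varpi_i(\df x)\varpi_i(\df x')$ is some self-coupling of $\varpi_j$ --- but the factor $\phi_{i,j}$ has vanished, and nothing forces this quantity to be $\le \phi_{i,j}^2\|g\|_{\varpi_j}^2$. A small counterexample: let $\X_i=\{0,1,2\}$, $\X_j=\{0,1\}$, with joint law placing mass $1/3$ on each of $(0,0),(1,1),(2,0)$; take $d_j$ discrete and $d_i(0,1)=d_i(1,2)=100$, $d_i(0,2)=200$. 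Then (H1)--(H2) hold, the contraction coefficient can be taken as $\phi_{i,j}=1/100$, yet for $g=(1,-2)\in L_0^2(\varpi_j)$ one computes $\|Q_{i,j}g\|_{\varpi_i}=\|g\|_{\varpi_j}$, so $\|Q_{i,j}\|_{\mathrm{op}}\ge 1$. (The lemma itself is not violated here, because $\phi_{j,i}$ is large and so $r(\Phi)\ge m-1$; the point is that the \emph{entrywise operator-norm bound} simply does not hold, so your iteration step has nothing to iterate.)

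The paper takes a genuinely different, and necessarily more indirect, route precisely to avoid this issue: it never tries to bound $\|Q_{i,j}\|_{\mathrm{op}}$ directly. Instead it (i) couples two copies of the altered random walk using the kernels $K_{i,j}$ to prove geometric decay of the expected $D$-distance at rate $\|\Psi^{t-1}\|_\infty$ (Lemma~\ref{lem:wasserstein}); (ii) uses one-shot coupling together with (H2) to upgrade that Wasserstein decay to a total-variation decay at the same rate (Lemma~\ref{lem:tv}); and (iii) invokes Theorem~2.1 of \cite{roberts1997geometric} to convert geometric TV convergence for all $L_*^2$ initial laws into the $L^2$ operator-norm bound $\|\tilde R\|_\varphi \le \eta/m$. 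It is the TV-and-reversibility conversion in step (iii), not an entrywise spectral argument, that crosses from Wasserstein control to the $L^2$ world. If you want to salvage your approach, you would need a different mechanism than blockwise operator norms; as written, the key inequality is unproved and, in general, unprovable.
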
	
	
	The proof is divided into several steps.
	We first define an altered version of the random walk and relate the $L^2$ norm of its Markov operator to $g(\Lambda,\ty)$, the spectral gap of the original random walk.
	We then incorporate a coupling argument, somewhat similar to that used in \cite{feng2021rapid}, to construct a convergence bound for the altered random walk in a Wasserstein divergence.
	Next, we use one-shot coupling \citep{roberts2002one,madras2010quantitative} to translate the bound to one in total variation distance.
	Finally, we use a result in \cite{roberts1997geometric} to further translate the convergence bound in total variation distance to a bound on the $L^2$ norm of the chain's Markov operator.

	Throughout this subsection, fix $\Lambda \subset [n]$ such that $|\Lambda| = n-m$, where $m \in \{2,\dots,n\}$, and let $\ty \in \X_{\Lambda}$ be such that $\pi_{\Lambda}(\ty) > 0$.
	Assume that the assumptions of Lemma~\ref{lem:specind} hold.
	In particular, for $i,j \in - \Lambda$ such that $i \neq j$, there is a coupling kernel $K_{i,j}$ and $\phi_{i,j} < \infty$ such that
	\begin{equation} \label{ine:contraction-1}
		\int_{\X_j \times \X_j} d_{\Lambda,\ty,j}(x'',x''') \, K_{i,j}((x,x'), \df (x'',x''')) \leq \phi_{i,j} \, d_{\Lambda,\ty,i}(x,x')
	\end{equation}
	for $\pi_{\{i\} \mid \Lambda}(\cdot \mid \ty)$-almost every $x,x' \in \X_i$.
	For $i \in - \Lambda$, let $\phi_{i,i} = 0$, and let $K_{i,i}: \X_i \times \X_i \to \B_i \times \B_i$ be a Markov transition kernel such that
	\[
	K_{i,i}((x,x'), A) = \int_{A'} \pi_{\{i\} \mid \Lambda}(x'' \mid \ty) \, \df x'',
	\]
	where $A' = \{(x'',x''): \; x'' \in A\}$.
	($A'$ is measurable when~$A$ is since the former is the intersection of $A \times A$ and the set of points $(x'',x''')$ such that $d_{\Lambda,\ty,i}(x'',x''') = 0$.)
	Then~\eqref{ine:contraction-1} holds even when $i = j$.
	Moreover, the influence matrix $\Phi(\Lambda,\ty)$ can be written as $(\phi_{i,j})$.

	\subsubsection{An altered random walk}
	
	It is convenient to consider a random walk chain that is a slight alteration of Algorithm~\ref{alg:randomwalk}.
	Just like Algorithm~\ref{alg:randomwalk}, Algorithm~\ref{alg:altered} defines a Markov chain that is reversible with respect to a distribution of the form
	\[
	\varphi(\{i\} \times A) = \frac{1}{m} \varpi_i(A), \quad i \in - \Lambda, \; A \in \B_i,
	\]
	with $\varpi_i$ being the measure given by $\pi_{\{i\} \mid \Lambda}(\cdot \mid \ty)$.
	
	\begin{algorithm}
		\caption{One step of an altered random walk associated with $\Lambda \subset [n]$ and $\ty \in \X_{\Lambda}$:}\label{alg:altered}
		\begin{algorithmic}
			\State \textbf{Input:} Current state $(j,x) \in \bigcup_{i \in - \Lambda} (\{i\} \times \X_i)$.
			\State Let $\tx \in \X$ be such that $\tx_{\Lambda} = \ty$ and $\tx_{\{j\}} = x$.
			\State Randomly and uniformly choose a coordinate $j' \in - \Lambda$.
			
			\If{$j'=j$}
			\State Draw $x' \in \X_{\{j\}}$ from $\pi_{\{j\} \mid \Lambda } (\cdot \mid \tx_{\Lambda})$.

			\Else
			
			\State Draw $x' \in \X_{\{j'\}}$ from $\pi_{\{j'\} \mid \Lambda \cup \{j\}} (\cdot \mid \tx_{\Lambda \cup \{j\}})$.
			
			\EndIf
			
			\State \textbf{Return:} New State $(j',x')$.
			
		\end{algorithmic}
	\end{algorithm}
	
	Let $\RW(\cdot,\cdot)$ be the transition kernel for Algorithm~\ref{alg:randomwalk}, and $\RWNL(\cdot,\cdot)$, that for Algorithm~\ref{alg:altered}.
	Each kernel defines a self-adjoint operator on $L_0^2(\varphi)$.
	Indeed, $\RW$ is given by~\eqref{eq:RWf}, i.e.,
	for $f \in L_0^2(\varphi)$, $i \in \Lambda$, and $x \in \X_i$,
	\[
	\RW f(i,x) = \frac{1}{m} \sum_{j \in - (\Lambda \cup \{i\})} \E [ T_jf(X_j) \mid X_i = x, \; \X_{\Lambda} = \ty ] + \frac{f(i,x)}{m} ,
	\]
	where $T_if(x) = f(i,x)$.
	On the other hand,
	\[
		\RWNL f(i,x) = \frac{1}{m} \sum_{j \in - (\Lambda \cup \{i\})} \E [ T_jf(X_j) \mid X_i = x, \; \X_{\Lambda} = \ty ] + \frac{1}{m} \E [ T_if(X_i) \mid \X_{\Lambda} = \ty ].
	\]

	Let $\Range'$ be the space of functions~$f$ in $L_0^2(\varphi)$ such that $T_i f \in L_0^2(\varpi_i)$ for $i \in - \Lambda$.
	$\RW|_{\Range'}$ and $\RWNL|_{\Range'}$, the restrictions of~$\RW$ and~$\RWNL$ to~$\Range'$, are related by the following formula:
	\[
	\RW|_{\Range'} = \RWNL|_{\Range'} + \frac{\Id}{m},
	\]
	where $\Id$ is the identity on~$\Range'$.
	It follows that
	\[
	g(\Lambda,\ty) = 1 - \sup_{\stackrel{f \in \Range'}{f \neq 0}} \frac{\langle f, \RW f \rangle_{\varphi}}{\|f\|_{\varphi}^2} = \frac{m-1}{m} - \sup_{\stackrel{f \in \Range'}{f \neq 0}} \frac{\langle f, \RWNL f \rangle_{\varphi}}{\|f\|_{\varphi}^2} \geq \frac{m-1}{m} - \|\RWNL\|_{\varphi} ,
	\]
	where the first equality is part of~\eqref{eq:1-g} derived in Section~\ref{ssec:proof-randomwalk}.
	Hence, to prove Lemma~\ref{lem:specind}, it suffices to show that
	\begin{equation} \label{ine:specind}
		\|\RWNL\|_{\varphi} \leq \frac{\eta}{m}.
	\end{equation}

	\subsubsection{Convergence in a Wasserstein divergence}

	According to Lemma~\ref{lem:roberts}, to say that~\eqref{ine:specind} holds is to say that the altered random walk chain converges geometrically in the $L^2$ distance at a rate of $\eta/m$.
	To prove this, we first show that the chain converges geometrically in some Wasserstein divergence.

	Let $D: [\bigcup_{i \in - \Lambda} (\{i\} \times \X_i)] \times [\bigcup_{i \in - \Lambda} (\{i\} \times \X_i)] \to [0,\infty]$ be such that, for $i,j \in - \Lambda$ and $x \in \X_i$, $x' \in \X_j$, 
	\[
	D((i,x),(j,x')) = \begin{cases}
		d_{\Lambda,\ty,i}(x,x') & i = j, \\
		\infty & i \neq j.
	\end{cases}
	\]
	A measure in $L_*^2(\varphi)$ has the form 
	\begin{equation} \label{eq:omega}
	\omega(\{i\} \times A) =  a_i \, \omega_i(A), \quad i \in - \Lambda, \; A \in \B_i,
	\end{equation}
	where, $\sum_{i \in - \Lambda} a_i = 1$, and, for $i \in - \Lambda$, $a_i \geq 0$ and $\omega_i \in L_*^2(\varpi_i)$.
	
	The following lemma implies that the altered random walk chain converges geometrically in the Wasserstein divergence induced by~$D$.

	\begin{lemma} \label{lem:wasserstein}
		Let $\omega \in L_*^2(\varphi)$ be as in~\eqref{eq:omega}.
		Then there exist a pair of random walk chains associated with Algorithm~\ref{alg:altered}, denoted by $(I(t), X(t))_{t=0}^{\infty}$ and $(I'(t), X'(t))_{t=0}^{\infty}$, that satisfy the following properties:
		\begin{enumerate}
			\item[(P1)] $(I(0), X(0)) \sim \omega$, and independently, $(I'(0),X'(0)) \sim \varphi$.
			\item[(P2)] $I(t) = I'(t)$ for $t \geq 1$.
			\item[(P3)] Given $I(t) = i_t \in - \Lambda$, the distribution of $X(t)$ is absolutely continuous with respect to~$\varpi_{i_t}$.
			\item[(P4)] There exists a constant $C_{\omega} < \infty$ such that, for each positive integer~$t$,
			\begin{equation} \nonumber
			\E \left[D \left( (I(t), X(t)), (I'(t), X'(t)) \right) \right] \leq C_{\omega} \|\Psi^{t-1}\|_{\infty},
			\end{equation}
			where $\Psi = \Phi(\Lambda,\ty)/m$, and, for any matrix $A = (a_{i,j})$, $\|A\|_{\infty} = \max_i \sum_j |a_{i,j}|$. 
		\end{enumerate}
	\end{lemma}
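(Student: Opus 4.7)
The plan is to construct the coupled pair step by step, synchronizing the indices from $t \geq 1$ onward and employing the coupling kernels $K_{i,j}$ to handle the $\X$-valued updates whenever possible. Initialize $(I(0),X(0))$ and $(I'(0),X'(0))$ independently according to $\omega$ and $\varphi$, giving (P1). At every step $t \geq 1$, draw a single index $j'$ uniformly from $-\Lambda$ and set $I(t) = I'(t) = j'$, which immediately yields (P2). Since $I(0)$ and $I'(0)$ may differ, the first-step $\X$-updates must be generated independently, each according to its marginal transition prescribed by Algorithm~\ref{alg:altered}. For $t \geq 2$ we have $I(t-1) = I'(t-1) = i$, and we draw $(X(t),X'(t))$ from $K_{i,j'}((X(t-1),X'(t-1)),\cdot)$; when $j' = i$, the diagonal kernel $K_{i,i}$ produces two identical samples from $\varpi_i$. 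Property (P3) then follows by induction: the two marginals of $K_{i,j}((x,x'),\cdot)$ are $\Pi^j_{\Lambda,\ty,i,x}$ and $\Pi^j_{\Lambda,\ty,i,x'}$, so mixing against a distribution absolutely continuous with respect to $\varpi_i$ yields a distribution absolutely continuous with respect to $\varpi_j$, because $\varpi_j = \int \Pi^j_{\Lambda,\ty,i,x} \, \varpi_i(\df x)$.

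The next step is a linear recursion for $u_j(t) := \E[d_{\Lambda,\ty,j}(X(t),X'(t)) \mathbf{1}\{I(t) = j\}]$. Fix $t \geq 2$ and condition on $(I(t-1),X(t-1),X'(t-1)) = (i,x,x')$ and $I(t) = j$. The contraction~\eqref{ine:contraction-1} supplied by (H2) bounds the resulting conditional expected distance by $\phi_{i,j} \, d_{\Lambda,\ty,i}(x,x')$; (P3) places $(x,x')$ in the $\pi_{\{i\}\mid\Lambda}$-almost-sure set where the contraction holds, and the case $j = i$ is handled trivially since $\phi_{i,i} = 0$ and the diagonal coupling forces $X(t) = X'(t)$. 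Combined with $\Prob(I(t) = j \mid I(t-1) = i) = 1/m$, this yields
\begin{equation*}
u_j(t) \leq \sum_{i \in -\Lambda} \frac{\phi_{i,j}}{m} \, u_i(t-1) = \sum_{i \in -\Lambda} \Psi_{i,j} \, u_i(t-1).
\end{equation*}
Iterating this back to $t = 1$ gives $u_j(t) \leq \sum_{i} u_i(1) (\Psi^{t-1})_{i,j}$; summing over~$j$ and using nonnegativity of the entries of $\Psi^{t-1}$ produces $\E[D((I(t),X(t)),(I'(t),X'(t)))] = \sum_j u_j(t) \leq C_\omega \|\Psi^{t-1}\|_{\infty}$ with $C_\omega := \sum_{i \in -\Lambda} u_i(1)$.

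The principal obstacle is verifying $C_\omega < \infty$, since the first step cannot exploit the coupling kernels $K_{i,j}$ and must instead rely on regularity of $\omega$. Conditional on $I'(1) = i$, stationarity of chain~2 gives $X'(1) \sim \varpi_i$; conditional on $I(1) = i$, chain~1's value has some distribution $\mu_i$ on $\X_i$, and by our construction $X(1)$ and $X'(1)$ are independent given the common index. By Fubini together with the symmetry of $d_{\Lambda,\ty,i}$,
\begin{equation*}
u_i(1) = \Prob(I(1) = i) \int_{\X_i} r_i(x) \, \mu_i(\df x), \qquad r_i(x) := \int_{\X_i} d_{\Lambda,\ty,i}(x,x') \, \varpi_i(\df x'),
\end{equation*}
and assumption (H1) gives $r_i \in L^2(\varpi_i)$. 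Because $\omega \in L_*^2(\varphi)$ and $\RWNL$ is a bounded self-adjoint operator on $L^2(\varphi)$, the distribution $\omega\RWNL$ again lies in $L_*^2(\varphi)$; disintegrating its density along the index marginal shows $\df\mu_i / \df\varpi_i \in L^2(\varpi_i)$. Cauchy--Schwarz then bounds $\int r_i \, \df\mu_i \leq \|r_i\|_{\varpi_i} \, \|\df\mu_i / \df\varpi_i\|_{\varpi_i} < \infty$, and summing the $|-\Lambda|$ finite terms delivers $C_\omega < \infty$, completing the plan.
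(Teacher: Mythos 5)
Your proof is correct and follows essentially the same route as the paper: the same coupling construction (synchronize indices from $t\geq 1$, draw the two $\X$-components independently at the first coupled step, then propagate via the kernels $K_{i,j}$), the same use of stationarity so that $X'(t)\sim\varpi_{i}$ given $I'(t)=i$, and the same iterated-contraction bound leading to $\|\Psi^{t-1}\|_\infty$. The only cosmetic differences are organizational: you phrase the contraction as a one-step vector recursion $u(t)^\top\leq u(t-1)^\top\Psi$ where the paper conditions on the full index trajectory and averages the product $\prod\phi_{i_s,i_{s+1}}$, and for $C_\omega<\infty$ you pass through $\omega\RWNL\in L_*^2(\varphi)$ while the paper applies Cauchy--Schwarz twice directly against the density $\df\omega_{i_0}/\df\varpi_{i_0}$; both are valid and rest on the same ingredients, (H1) and the $L^2$-density hypothesis on $\omega$.
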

	
	\begin{proof}
		Construct the two chains according the following Markovian procedure.
		\begin{enumerate}
			\item Let $(I(0), X(0)) \sim \omega$, and independently, $(I'(0),X'(0)) \sim \varphi$.
			\item Draw $I(1) = I'(1)$ randomly and uniformly from $- \Lambda$.
			Denote the observed values of $I(0)$, $I'(0)$, $X(0)$, $X'(0)$, and $I(1) = I'(1)$ by $i_0$, $i'_0$, $z_0$, $z'_0$, and $i_1$ respectively.
			\item For $i,j \in - \Lambda$ and $x \in \X_i$, let $\bar{\Pi}^{j}_{\Lambda,\ty,i,x} = \Pi^{j}_{\Lambda,\ty,i,x}$ if $i \neq j$, and let $\bar{\Pi}^{j}_{\Lambda,\ty,i,x}$ be the probability measure associated with $\pi_{\{j\} \mid \Lambda}(\cdot \mid \ty)$ if $i = j$.
			Independently, draw $X(1)$ from $\bar{\Pi}^{i_1}_{\Lambda,\ty,i_0,z_0}$, and $X'(1)$ from $\bar{\Pi}^{i_1}_{\Lambda,\ty,i'_0,z'_0}$.
			\item For a positive integer~$t$, given $(I(t), X(t)) = (i_t, z_t)$ and $(I'(t), X'(t)) = (i_t, z'_t)$, draw $(I(t+1), X(t+1), I'(t+1), X'(t+1))$ as follows.
			Randomly and uniformly draw $I(t+1) = I'(t+1)$ from $-\Lambda$, and denote the observed value by $i_{t+1}$.
			Then, draw $(X(t+1), X'(t+1))$ using the coupling kernel $K_{i_t,i_{t+1}}( (z_t,z'_t), \cdot )$.
		\end{enumerate}

		It is easy to see that $(I(t), X(t))_t$ and $(I'(t), X'(t))_t$ are both Markov chains whose transition laws follow Algorithm~\ref{alg:altered}, and that they satisfy (P1) and (P2).
		Let us establish (P3) and (P4).
		Fix a positive integer~$t$.
		Let $i_s \in - \Lambda$ for $s = 0,\dots,t$, and let $i'_0 \in - \Lambda$.
		Given $I(s) = I'(s) = i_s$ for $s = 0,\dots,t$ and $I'(0) = i'_0$, the distribution of $(X(t), X'(t))$, denoted by $\nu_{i_0,\dots,i_t; i'_0}$, is given by the following recursive formula:
		\[
		\begin{aligned}
			\nu_{i_0; i'_0}(\df z_0, \df z'_0) &= \omega_{i_0}(\df z_0) \, \varpi_{i'_0}(\df z'_0), \\
			\nu_{i_0,i_1; i'_0}(\df z_1, \df z'_1) &= \int_{\X_{i_0} \times \X_{i'_0}} \bar{\Pi}^{i_1}_{\Lambda, \ty, i_0, z_0}(\df z_1) \, \bar{\Pi}^{i_1}_{\Lambda,\ty,i'_0,z'_0}(\df z'_1) \, \nu_{i_0; i'_0}(\df z_0, \df z'_0), \\
			\nu_{i_0,\dots,i_{s+1};i'_0}(\df z_{s+1}, \df z'_{s+1}) &= \int_{\X_{i_s}^2}  K_{i_s, i_{s+1}} ((z_s, z'_s), (\df z_{s+1}, \df z'_{s+1})) \, \nu_{i_0,\dots,i_s;i'_0}(\df z_s, \df z'_s) ,
		\end{aligned}
		\]
		where $s \geq 1$.
		One can check that, for $s \geq 0$, the distribution given by $A \mapsto \nu_{i_0,\dots,i_s;i'_0}(A \times \X_{i_s})$ is absolutely continuous with respect to $\varpi_{i_s}$, while that given by $A \mapsto \nu_{i_0,\dots,i_s;i'_0}(\X_{i_s} \times A)$ is $\varpi_{i_s}$ itself.
		This implies that (P3) holds.
		Moreover, for $s \geq 1$ and $\nu_{i_0,\dots,i_s;i'_0}$-almost every $(z_s,z'_s) \in \X_{i_s}^2$,
		\[
		\int_{\X_{i_{s+1}}^2} d_{\Lambda,\ty,i_{s+1}}(z_{s+1}, z'_{s+1}) \, K_{i_s, i_{s+1}}((z_s, z'_s), \df (z_{s+1},z'_{s+1})) \leq \phi_{i_s, i_{s+1}} \, d_{\Lambda,\ty,i_s}(z_s,z'_s).
		\]
		Thus, 
		\[
		\begin{aligned}
			& \E \left[D \left( (I(t), X(t)), (I'(t), X'(t)) \right) \mid I(s) = I'(s) = i_s \text{ for } s = 0,\dots,t; \; I'(0) = i'_0 \right] \\
			=& \int_{\X_{i_t}^2} d_{\Lambda,\ty,i_t}(z_t, z'_t) \, \nu_{i_0,\dots,i_t;i'_0}(\df z_t, \df z'_t) \\
			=& \int_{\X_{i_{t-1}}^2} \int_{\X_{i_t}^2} d_{\Lambda,\ty,i_t}(z_t, z'_t) \, K_{i_{t-1},i_t}((z_{t-1},z'_{t-1}), (\df z_t, \df z'_t)) \, \nu_{i_0,\dots,i_{t-1};i'_0}(\df z_{t-1}, \df z'_{t-1}) \\
			\leq & \phi_{i_{t-1}, i_t} \int_{\X_{i_{t-1}}^2} d_{\Lambda,\ty,i_{t-1}}(z_{t-1}, z'_{t-1}) \, \nu_{i_0,\dots,i_{t-1}'i'_0}(\df z_{t-1}, \df z'_{t-1}) \\
			\leq & \left( \prod_{s=1}^{t-1} \phi_{i_s,i_{s+1}} \right) \int_{\X_{i_1}^2} d_{\Lambda,\ty,i_1}(z_1,z'_1) \, \nu_{i_0,i_1;i'_0}(\df z_1, \df z'_1 ).
		\end{aligned}
		\]
		(If $t=1$, then $\prod_{s=1}^{t-1} \phi_{i_s,i_{s+1}}$ is interpreted as~1.)
		Then
		\[
		\begin{aligned}
			& \E \left[D_{\Lambda,\ty} \left( (I(t), X(t)), (I'(t), X'(t)) \right) \right] \\
			\leq & \frac{1}{m^{t+1}} \left( \sum_{i_2,\dots,i_t \in - \Lambda} \prod_{s=1}^{t-1} \phi_{i_s,i_{s+1}} \right) \sum_{i_0,i'_0,i_1 \in - \Lambda} a_{i_0} \int_{\X_{i_1}^2} d_{\Lambda,\ty,i_1}(z_1,z'_1) \, \nu_{i_0,i_1;i'_0}(\df z_1, \df z'_1 ) \\
			 \leq& C_{\omega} \|\Psi^{t-1}\|_{\infty} ,
		\end{aligned}
		\]
		where
		\[
		\begin{aligned}
				C_{\omega} 
			=& \frac{1}{m^2} \sum_{i_0,i'_0,i_1 \in - \Lambda} a_{i_0} \int_{\X_{i_1}^2} d_{\Lambda,\ty,i_1}(z_1,z'_1) \, \nu_{i_0,i_1;i'_0}(\df z_1, \df z'_1 ) \\
			=& \frac{1}{m^2} \sum_{i_0,i_1 \in - \Lambda} a_{i_0} \int_{\X_{i_0}} \int_{\X_{i_1}^2} d_{\Lambda,\ty,i_1}(z_1,z'_1)  \, \bar{\Pi}^{i_1}_{\Lambda,\ty,i_0,z_0}(\df z_1) \, \varpi_{i_1}(\df z'_1) \, \omega_{i_0}(\df z_0) .
		\end{aligned}
		\]
		
		It remains to show that $C_{\omega} < \infty$.
		Fix $i_0, i_1 \in - \Lambda$.
		Recall that (H1) and (H2) in Section~\ref{ssec:specind} are assumed.
		By (H1), 
		\[
		z_1 \mapsto \int_{\X_{i_1}} d_{\Lambda,\ty,i_1}(z_1,z'_1) \, \varpi_{i_1}(\df z'_1)
		\]
		is in $L^2(\varpi_{i_1})$.
		By Cauchy-Schwarz,
		\[
		z_0 \mapsto f(z_0) = \int_{\X_{i_1}} \int_{\X_{i_1}} d_{\Lambda,\ty,i_1}(z_1,z'_1) \, \varpi_{i_1}(\df z'_1) \, \bar{\Pi}^{i_1}_{\Lambda,\ty,i_0,z_0}(\df z_1)
		\]
		is in $L^2(\varpi_{i_0})$.
		Thus,
		\[
		\begin{aligned}
			&\int_{\X_{i_0}} \int_{\X_{i_1}^2} d_{\Lambda,\ty,i_1}(z_1,z'_1)  \, \bar{\Pi}^{i_1}_{\Lambda,\ty,i_0,z_0}(\df z_1) \, \varpi_{i_1}(\df z'_1) \, \omega_{i_0}(\df z_0) \\
			= & \int_{\X_{i_0}} f(z_0) \, \frac{\df \omega_{i_0}}{\df \varpi_{i_0}}(z_0) \, \varpi_{i_0}(\df z_0) \\
			< & \infty.
		\end{aligned}
		\]
		This concludes the proof.
	\end{proof}

	\subsubsection{Convergence in total variation}
	
	To continue, we use the one-shot coupling technique to show that the altered random walk chain converges in total variation distance.
	To be specific, we show the following.
	\begin{lemma} \label{lem:tv}
		Let $\omega \in L_*^2(\varphi)$.
		For $t \geq 0$, denote by $\omega \tilde{R}^t$ the distribution of the $t$th element of a Markov chain associated with Algorithm~\ref{alg:altered}, assuming that the chain's starting distribution (i.e., distribution of its zeroth element) is~$\omega$.
		Then, there exists a constant $C_{\omega} < \infty$ such that, for $t \geq 2$,
		\[
		d_{\scriptsize\mbox{TV}}(\omega \tilde{R}^t, \varphi) \leq C_{\omega} \|\Psi^{t-2}\|_{\infty},
		\]
		where $\Psi = \Phi(\Lambda,\ty)/m$, and $\|\cdot\|_{\infty}$ is defined in Lemma~\ref{lem:wasserstein}.
	\end{lemma}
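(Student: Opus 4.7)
The plan is to apply the one-shot coupling technique: keep the coupled pair from Lemma~\ref{lem:wasserstein} through time $t-1$ to get a Wasserstein-type control on the separation of the two chains, then replace the coupling kernel at the final step with a maximal coupling to convert this separation into a total variation bound.

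Concretely, I would proceed as follows. Let $\omega \in L_*^2(\varphi)$ and fix $t \geq 2$. Construct $(I(s), X(s))_{s=0}^{t-1}$ and $(I'(s), X'(s))_{s=0}^{t-1}$ exactly as in the proof of Lemma~\ref{lem:wasserstein}, so that $(I(0), X(0)) \sim \omega$ and, independently, $(I'(0), X'(0)) \sim \varphi$, and $I(s) = I'(s)$ for all $s \geq 1$. By (P4), writing $i_{t-1} = I(t-1) = I'(t-1)$,
\[
\E[d_{\Lambda,\ty,i_{t-1}}(X(t-1), X'(t-1))] \leq C_{\omega} \|\Psi^{t-2}\|_{\infty}.
\]
For the final step, pick $I(t) = I'(t) = j$ jointly and uniformly from $-\Lambda$ (independent of the past), and then, conditional on $j$, $i_{t-1}$, $X(t-1)$, $X'(t-1)$, draw $(X(t), X'(t))$ from a \emph{maximal} coupling of $\bar\Pi^{j}_{\Lambda,\ty,i_{t-1},X(t-1)}$ and $\bar\Pi^{j}_{\Lambda,\ty,i_{t-1},X'(t-1)}$. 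Since the marginals of this last step match Algorithm~\ref{alg:altered}, the chain $(I(s),X(s))_{s=0}^{t}$ still has law $\omega \tilde{R}^t$ at time $t$, and $(I'(s),X'(s))_{s=0}^{t}$ still has stationary law $\varphi$; hence by the coupling inequality
\[
d_{\scriptsize\mbox{TV}}(\omega \tilde{R}^t, \varphi) \leq \Prob\left((I(t),X(t)) \neq (I'(t),X'(t))\right) = \Prob(X(t) \neq X'(t)),
\]
where the last equality uses $I(t) = I'(t)$.

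Next I would bound $\Prob(X(t) \neq X'(t))$ by splitting on the value of $j$. If $j = i_{t-1}$, then both chains sample from $\pi_{\{j\}\mid \Lambda}(\cdot\mid\ty)$, so the maximal coupling makes them equal almost surely. If $j \neq i_{t-1}$, then assumption (H2) gives
\[
d_{\scriptsize\mbox{TV}}\left(\Pi^{j}_{\Lambda,\ty,i_{t-1},X(t-1)}, \Pi^{j}_{\Lambda,\ty,i_{t-1},X'(t-1)}\right) \leq k \, d_{\Lambda,\ty,i_{t-1}}(X(t-1), X'(t-1)),
\]
almost surely (using (P3) to locate $X(t-1), X'(t-1)$ in the almost-sure set on which (H2) applies), so that the maximal coupling achieves $\Prob(X(t)\neq X'(t)\mid j, i_{t-1}, X(t-1), X'(t-1))$ bounded by the right-hand side. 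Averaging over the uniform choice of $j$ and then over everything gives
\[
\Prob(X(t) \neq X'(t)) \leq \frac{m-1}{m}\, k\, \E[d_{\Lambda,\ty,i_{t-1}}(X(t-1), X'(t-1))] \leq k\, C_{\omega} \|\Psi^{t-2}\|_{\infty},
\]
and absorbing the factor $k(m-1)/m$ into the constant yields the claimed bound.

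The main technical point, rather than an obstacle, is verifying that substituting the maximal coupling at the final step preserves both marginals (so we really are comparing $\omega\tilde R^t$ to $\varphi$), and that the conditional version of (H2) is applicable along the trajectory of the coupled chain; property (P3) of Lemma~\ref{lem:wasserstein} is exactly what guarantees that $X(t-1)$ lies in the $\pi_{\{i_{t-1}\}\mid \Lambda}(\cdot\mid \ty)$-almost-everywhere set, so this is clean. Finiteness of the new constant follows from finiteness of $C_{\omega}$ in Lemma~\ref{lem:wasserstein} and of $k$ in (H2).
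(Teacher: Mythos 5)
Your proposal is correct and follows essentially the same route as the paper's proof: run the coupled pair from Lemma~\ref{lem:wasserstein} up to time $t-1$, then replace the last step by a maximal coupling (the paper writes this maximal coupling out explicitly via the overlap probability $p$, whereas you invoke it by name, but the construction is identical), invoke (H2) with (P2)/(P3) to bound the conditional non-coalescence probability, and conclude with the coupling inequality and (P4).
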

	\begin{proof}
		Let $(I(t),X(t))_{t=0}^{\infty}$ and $( I'(t),X'(t))_{t=0}^{\infty}$ be a pair of chains associated with Algorithm~\ref{alg:altered} that satisfy (P1) to (P4) in Lemma~\ref{lem:wasserstein}.
		
		Fix $t \geq 2$.
		Given $(I(t-1), X(t-1)) = (i_{t-1},z_{t-1})$ and $(I'(t-1), X'(t-1)) = (i_{t-1}, z'_{t-1})$, proceed as follows.
		Draw~$J$ randomly and uniformly from $- \Lambda$, and call the observed value~$j$.
		If $j = i_{t-1}$, draw~$Z$ from $\pi_{\{j\} \mid \Lambda}(\cdot \mid \ty)$, and let $Z' = Z$.
		If $j \neq i_{t-1}$, do the following:
		Let $\tx \in \X$ be such that $\tx_{\{i\}} = z_{t-1}$ and $\tx_{\Lambda} = \ty$, and let $\tx' \in \X$ be such that $\tx'_{\{i\}} = z'_{t-1}$ and $\tx_{\Lambda} = \ty$.
		Let
		\[
		\begin{aligned}
			p :=& \, p(i_{t-1},j,z_{t-1},z'_{t-1}) \\
			=& \int_{\X_j} \min \left\{ \pi_{\{j\} \mid \Lambda \cup \{i_{t-1}\}}(x \mid \tx_{\Lambda \cup \{i\}}), \, \pi_{\{j\} \mid \Lambda \cup \{i_{t-1}\}}(x \mid \tx'_{\Lambda \cup \{i\}}) \right\} \, \df x.
		\end{aligned}
		\]
		Then 
		\[
		1 - p = d_{\scriptsize\mbox{TV}} \left( \Pi^j_{\Lambda,\ty,i_{t-1},z_{t-1}}, \Pi^j_{\Lambda,\ty,i_{t-1},z'_{t-1}} \right).
		\]
		With probability $p$, draw $Z = Z'$ from the density
		\[
		x \mapsto q(x) = \frac{1}{p} \min \left\{ \pi_{\{j\} \mid \Lambda \cup \{i_{t-1}\}}(x \mid \tx_{\Lambda \cup \{i\}}), \, \pi_{\{j\} \mid \Lambda \cup \{i_{t-1}\}}(x \mid \tx'_{\Lambda \cup \{i\}}) \right\}.
		\]
		With probability $1 - p$, draw~$Z$ from the density
		\[
		x \mapsto \frac{\pi_{\{j\} \mid \Lambda \cup \{i_{t-1}\}}(x \mid \tx_{\Lambda \cup \{i\}}) - p \, q(x )}{1 - p},
		\]
		and independently, draw~$Z'$ from the density
		\[
		x \mapsto \frac{\pi_{\{j\} \mid \Lambda \cup \{i_{t-1}\}}(x \mid \tx'_{\Lambda \cup \{i\}}) - p \, q(x )}{1 - p}.
		\]
		Then $(J,Z) \sim \omega \tilde{R}^t$, while $(J,Z') \sim \varphi$.
		Moreover, given $(I(t-1), X(t-1)) = (i_{t-1},z_{t-1})$, $(I'(t-1), X'(t-1)) = (i_{t-1}, z'_{t-1})$, and $J = j$, the probability of the event $Z = Z'$ is precisely~$p$.
		
		By (H2) and (P3) along with (P2), there is a constant $k < \infty$ such that, almost surely,
		\[
		\begin{aligned}
			p(I(t-1),J,X(t-1),X'(t-1)) &= 1 - d_{\scriptsize\mbox{TV}} \left( \Pi^J_{\Lambda,\ty,I(t-1),X(t-1)}, \Pi^J_{\Lambda,\ty,I(t-1),X'(t-1)} \right) \\
			& \geq 1 - k d_{\Lambda,\ty,I(t-1)}(X(t-1), X'(t-1)) \\
			&= 1 - k D \left((I(t-1),X(t-1)), (I'(t-1), X'(t-1)) \right).
		\end{aligned}
		\]
		It then follows from (P4) that there exists a constant $C'_{\omega}$ unrelated to~$t$ such that
		\[
		\begin{aligned}
			\Prob((J,Z) \neq (J,Z')) &= 1 - \E \left[ p(I(t-1),J,X(t-1),X'(t-1)) \right] \\
			& \leq k \E \left[  D \left((I(t-1),X(t-1)), (I'(t-1), X'(t-1)) \right) \right] \\
			& \leq k C'_{\omega} \|\Psi^{t-2}\|_{\infty}.
		\end{aligned} 
		\]
		By the well-known coupling inequality,
		\[
		d_{\scriptsize\mbox{TV}}(\omega \tilde{R}^t, \varphi) \leq \Prob((J,Z) \neq (J,Z')) \leq k C'_{\omega} \|\Psi^{t-2}\|_{\infty}.
		\]
	\end{proof}
	
	\subsubsection{Convergence in the $L^2$ distance}
	
	To establish~\eqref{ine:specind} and thus Lemma~\ref{lem:specind}, we use Lemma~\ref{lem:tv} to derive a convergence bound in the $L^2$ distance.
	
	Recall that it is assumed that the spectral radius of $\Phi(\Lambda,\ty)$ is no greater than $\eta \in [0,m-1)$.
	Then the spectral radius of $\Psi = \Phi(\Lambda,\ty)/m$ is no greater than $\eta/m \in [0,(m-1)/m)$.
	Since $\|\cdot\|_{\infty}$, as given in Lemma~\ref{lem:wasserstein} is a matrix norm, by Gelfand's formula,
	\[
	\lim_{t \to \infty} \|\Psi^{t-2}\|_{\infty}^{1/t} \leq \frac{\eta}{m},
	\]
	This implies that, for $\rho > \eta/m$, one can find a constant $c_{\rho}$ such that
	\[
	\|\Psi^{t-2}\|_{\infty} \leq c_{\rho} \rho^t
	\]
	for $t \geq 2$.
	
	Fix $\rho \in (\eta/m,1)$.
	For $\omega \in L_*^2(\varphi)$ and $t \geq 0$, let $\omega \tilde{R}^t$ be as defined in Lemma~\ref{lem:tv}.
	Then the said lemma implies that there is a constant $C_{\omega} < \infty$ such that
	\[
	d_{\scriptsize\mbox{TV}}(\omega \tilde{R}^t, \varphi) \leq C_{\omega} \rho^t
	\]
	for $t \geq 0$.
	By Theorem 2.1 in \cite{roberts1997geometric}, the $L^2$ distance between $\omega \tilde{R}^t$ and~$\varphi$ also decreases at a rate of $\rho^t$ or faster;
	moreover, $\|\tilde{R}\|_{\varphi} \leq \rho$.
	Since $\rho \in (\rho/m,1)$ is arbitrary,~\eqref{ine:specind} holds.

	\bigskip
	\noindent{\bf Acknowledgment.} Qian Qin and Guanyang Wang gratefully acknowledge support by the National Science Foundation through grants DMS-2112887 and DMS-2210849.
	The authors thank James P. Hobert for helpful comments.

\bibliographystyle{ims}
\bibliography{qinbib}

\end{document}